\numberwithin{equation}{section}
\newtheorem{theorem}[equation]{Theorem}
\newtheorem{lemma}[equation]{Lemma}
\newtheorem{proposition}[equation]{Proposition}
\newtheorem{claim}[equation]{Claim}
\theoremstyle{definition}
\theoremstyle{remark}
\newtheorem{remark}[equation]{Remark}
\newcommand{\Z}{{\mathbb Z}}
\newcommand{\B}{{\mathcal B}}
\newcommand{\A}{{\mathcal A}}
\newcommand\id{\operatorname{id}}
\newcommand\coad{\operatorname{coad}}
\newcommand\Tr{\operatorname{Tr}}
\newcommand\co{\operatorname{co}}
\newcommand\Aut{\operatorname{Aut}}
\newcommand\res{\operatorname{res}}
\newcommand\Ind{\operatorname{Ind}}
\newcommand\Supp{\operatorname{Supp}}
\newcommand\PSL{\operatorname{PSL}}
\newcommand\Rep{\operatorname{Rep}}
\begin{document}

\title{Semisimple Hopf algebras of dimension 60}

\author{Sonia Natale}
\address{Facultad de Matem\'atica, Astronom\'\i a y F\'\i sica.
Universidad Nacional de C\'ordoba. CIEM -- CONICET. (5000) Ciudad
Universitaria. C\'ordoba, Argentina} \email{natale@mate.uncor.edu
\newline \indent \emph{URL:}\/ http://www.mate.uncor.edu/natale}

\begin{abstract}We determine the isomorphism classes of semisimple
Hopf algebras of dimension $60$ which are simple as Hopf
algebras.\end{abstract}

\dedicatory{Dedicated to Susan Montgomery}

\thanks{This work was partially supported by CONICET, ANPCyT, SeCYT (UNC), FaMAF and Alexander von Humboldt Foundation.}

\subjclass{16W30}

\date{November 25, 2009.}

\maketitle

\section{Introduction and main results}
We shall work over an algebraically closed field $k$ of
characteristic zero. Let $H$ be a semisimple Hopf algebra over
$k$. A Hopf subalgebra $K$ of $H$ is normal if it is stable under
the left adjoint action of $H$. If $K$ is normal in $H$, then the
quotient $H/HK^+$ is a Hopf algebra and there is an exact sequence
$k \to K \to H \to \overline H \to k$. In this case, $H$ is
isomorphic to a bicrossed product $K {}^{\tau}\#_{\sigma}\overline
H$ with respect to appropriate compatible data.

The Hopf algebra $H$ is called simple if it contains no proper
normal Hopf subalgebra. The notion of simplicity is self-dual,
that is, $H$ is simple if and only if $H^*$ is simple.

For instance, if $G$ is a finite simple group, then the group
algebra $kG$ and its dual $k^G$ are simple Hopf algebras.
Furthermore, in this case, any twisting deformation of $kG$ is
simple \cite{nik}. However, there are examples of solvable groups
that admit simple twisting deformations \cite{gn}.

It was shown in \cite{ssld} that, up to twisting deformations,
there is no semisimple Hopf algebra of dimension $< 60$ which is
simple as a Hopf algebra. The only simple example in dimension $<
60$ is a twisting of the group $D_3\times D_3$ and has dimension
$36$.

\medbreak In dimension $60$ three examples are known of nontrivial
semisimple Hopf algebras which are simple as Hopf algebras. The
first two are the Hopf algebras $\A_0$ and $\A_1 \simeq \A_0^*$
constructed by Nikshych \cite{nik}. We have $\A_0 = (k\mathbb
A_5)^J$, where $J \in k\mathbb A_5 \otimes k\mathbb A_5$ is an
invertible twist lifted from a nondegenerate $2$-cocycle in a
subgroup of $\mathbb A_5$ isomorphic to $\Z_2 \times \Z_2$.

The third example is the self-dual Hopf algebra $\B$ constructed
in \cite{gn}. In this case $\B = (kD_3\otimes kD_5)^J$, where $J$
is an invertible twist also lifted from a nondegenerate
$2$-cocycle in a subgroup of $D_3\times D_5$ isomorphic to $\Z_2
\times \Z_2$.

As coalgebras, these examples are isomorphic to direct sums of
full matric coalgebras, as follows: \begin{align}\label{coalg-a1}
\A_1 & \simeq k \oplus M_3(k)^{(2)}\oplus M_4(k)\oplus M_5(k),
\\ \label{coalg-a0} \A_0 & \simeq k^{(12)}\oplus M_4(k)^{(3)}, \\ \label{coalg-b}  \B & \simeq
k^{(4)}\oplus M_2(k)^{(6)}\oplus M_4(k)^{(2)}. \end{align} As for
the group-like elements, we have $G(\A_0) \simeq \mathbb A_4$ and
$G(\B) \simeq \Z_2 \times \Z_2$.

\medbreak At the level of fusion categories, it was shown in
\cite[Theorem 9.12]{ENO2} that $\Rep \A_0 \simeq \Rep \mathbb A_5$
is the only simple fusion category of dimension $60$. In this
context, according to  \cite[Definition 9.10]{ENO2}, a fusion
category is called simple if it contains no proper fusion
subcategories. In particular, there are simple Hopf algebras whose
fusion category is not simple in the sense of \cite{ENO2}.

\medbreak As a consequence of  \cite[Corollary 9.14]{ENO2}, it was
shown in \cite[Proposition 6.10]{BN} that if $G(H) = 1$, then $H$
is of type \eqref{coalg-a1} as a coalgebra, furthermore, $H$ is
simple and $H$ is isomorphic to $k^{\mathbb A_5}$ or to $\A_1$.
Also, by \cite[Corollary 6.12]{BN}, if $H$ is simple and of
coalgebra type \eqref{coalg-b}, then $H$ is isomorphic to the
self-dual Hopf algebra $\B$. We shall show in Proposition \ref{a0}
that if $H$ is simple and has coalgebra type \eqref{coalg-a0},
then $H$ is isomorphic to $\A_0 \simeq \A_1^*$.

\medbreak The main result of this paper is the following theorem,
that says that $\A_0$, $\A_1$ and $\B$ are actually the only
nontrivial simple examples in dimension $60$.

\begin{theorem}\label{main} Let $H$ be a nontrivial semisimple Hopf algebra of dimension
$60$. Suppose $H$ is simple. Then $H$ is isomorphic to $\A_0$ or
to $\A_1$ or to $\B$. \end{theorem}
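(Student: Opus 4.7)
Write $n=|G(H)|$ and $n^*=|G(H^*)|$; by the Nichols--Zoeller theorem both divide~$60$. The strategy is to pin down the coalgebra type of $H$ and then invoke the three identifications recalled in the introduction: \cite[Proposition 6.10]{BN} for type \eqref{coalg-a1}, Proposition~\ref{a0} for type \eqref{coalg-a0}, and \cite[Corollary 6.12]{BN} for type \eqref{coalg-b}. Since simplicity is self-dual, it suffices to analyse the pair $(n,n^*)$ up to swapping the roles of $H$ and $H^*$.

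The extreme cases are short. If $n=1$, \cite[Proposition 6.10]{BN} gives that $H$ has coalgebra type \eqref{coalg-a1} and is isomorphic to $k^{\mathbb A_5}$ or to $\A_1$; since $k^{\mathbb A_5}$ is trivial, the nontrivial outcome is $H\simeq\A_1$. Applying the same statement to $H^*$ handles the case $n^*=1$: dualising the conclusion yields $H\simeq k\mathbb A_5$ or $H\simeq\A_0$, so that nontrivially $H\simeq\A_0$. It therefore remains to treat the regime $n,n^*\ge 2$, where the goal is to prove that $H$ has coalgebra type \eqref{coalg-b}, at which point \cite[Corollary 6.12]{BN} will give $H\simeq\B$.

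For this I would first tighten the possible values of $n$ and $n^*$. A Hopf subalgebra of index two is automatically normal, so simplicity excludes $n=30$; and $n=60$ would make $H$ the trivial Hopf algebra $kG(H)$. The same bounds apply to $n^*$, leaving $n,n^*\in\{2,3,4,5,6,10,12,15,20\}$. For each admissible pair I would enumerate the coalgebra decompositions $\sum_d n_d d^2=60$ with $n_1=n^*$, subject to the divisibility relations coming from the freeness of $H$ over $kG(H)$ (Nichols--Zoeller) and from the orbit structure of the left tensoring action of $G(H^*)$ on the simple $H$-modules. The plan is then to exclude every candidate different from \eqref{coalg-b} by showing that it forces a proper normal Hopf subalgebra of $H$ or of $H^*$, contradicting simplicity.

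The main obstacle is precisely this last combinatorial analysis, most delicately for the intermediate values $n,n^*\in\{4,5,6,10,12\}$, where several coalgebra types are a priori compatible with the numerical constraints. The decisive tools will be: the normality of Hopf subalgebras of small prime index in $H$ or in a suitable quotient; the Nichols--Zoeller freeness of $H$ over $kG(H)$, which restricts the multiplicities $n_d$; and the orbit description of the $G(H^*)$-action by left tensoring on irreducible $H$-modules, which yields further divisibility relations on the $n_d$. Once only the coalgebra type \eqref{coalg-b} survives, \cite[Corollary 6.12]{BN} completes the argument, identifying $H$ with $\B$.
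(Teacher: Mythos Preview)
Your overall strategy matches the paper's: enumerate the possible coalgebra types of $H$ (this is Proposition~\ref{conteo}) and then, type by type, either identify $H$ with one of $\A_0,\A_1,\B$ or exhibit a proper normal Hopf subalgebra. The extreme cases $n=1$ or $n^*=1$ are dispatched exactly as you describe, and the reduction of the regime $n,n^*\ge 2$ to coalgebra type \eqref{coalg-b} is indeed what the paper establishes.

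The genuine gap is at the point you yourself flag as the ``main obstacle.'' The tools you list --- Nichols--Zoeller freeness, normality at prime index, and the $G(H^*)$-orbit constraints on the multiplicities $n_d$ --- are used in the paper, but they dispose of only a handful of candidates (essentially those discarded in the proof of Proposition~\ref{conteo}). Many types survive all your numerical filters and require substantially deeper arguments that do not appear in your toolkit. For example, type $(1,4;2,2;4,3)$ passes your constraints for both $H$ and $H^*$, and ruling it out (Subsection~\ref{final}) requires a biproduct decomposition $H\simeq R\# B$ with $\dim R=5$, a case split over the dimension-$12$ classification of $B$, a twisting argument, and detailed character computations. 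Several types with an irreducible comodule of degree $2$ are handled only via Theorem~\ref{bn-mres}, which pins down the commutative Hopf subalgebra $B[\chi]\simeq k^{\Gamma}$ for an explicit polyhedral group $\Gamma$; and the types with $|G(H)|\in\{10,12\}$ need the support lemmas for braided Hopf algebras over $D_n$ and $\mathbb A_4$ (Propositions~\ref{braided-dih} and~\ref{braided-a4}). Without these ingredients the elimination step does not go through, so your proposal is a correct outline of the paper's approach but not yet a proof.

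A minor slip: in your enumeration you write ``$n_1=n^*$,'' but the number of one-dimensional summands in the coalgebra decomposition of $H$ is $n=|G(H)|$, not $n^*$.
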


Theorem \ref{main} will be proved in Sections \ref{coal-types} and
\ref{pf}. The proof relies on the main results of the paper
\cite{BN}. In particular, we use the refinement, contained in
\cite[Theorem 1.1]{BN}, of the result  \cite[Theorem 11]{NR} of
Nichols and Richmond on semisimple Hopf algebras with an
irreducible comodule of dimension $2$.

We also make strong use of several tools developed in \cite{ssld}
regarding, for instance, the structure of quotient coalgebras and
relations among the fusion rules for irreducible characters and
Hopf subalgebras. We prove some facts on braided Hopf algebras
over the dihedral groups $D_n$ and the alternating group $\mathbb
A_4$, that we apply to some cases where a reduction to the
Radford-Majid biproduct situation is possible. See Propositions
\ref{braided-dih} and \ref{braided-a4}.

\medbreak The contents of the paper are the following: basic facts
and terminology on semisimple Hopf algebras and their irreducible
characters, as well as some results from \cite{ssld} and
\cite{BN}, are recalled in Section \ref{uno}. We discuss there
some properties about the structure of the left coideal subalgebra
of coinvariants under a Hopf algebra map, that prove to be useful
when considering the different possibilities in dimension $60$.
Section \ref{a4-dn} concerns biproducts, and we consider here some
special cases of braided Hopf algebras over $D_n$ and $\mathbb
A_4$.

Finally, the next two sections  are devoted to the proof of our
main result. In Section \ref{coal-types} we determine the
different possible coalgebra types arising in dimension $60$. See
Proposition \ref{conteo}. These possibilities are then studied
separately in each of the subsections of Section \ref{pf}.

\subsection*{Acknowledgement} It is the author's pleasure to express her recognition
for the influence of Susan Montgomery in her research on
semisimple Hopf algebras, through her own contributions,
interesting discussions and references.

This work was started during a research stay of the author at the
Mathematisches Institut der Universit\" at M\" unchen, as an
Alexander von Humboldt Fellow. She thanks Prof. Hans-J\" urgen
Schneider for the kind hospitality.

\section{Semisimple Hopf algebras}\label{uno}
Let $H$ be a semisimple Hopf algebra over $k$. We next recall some
of the terminology and conventions from \cite{ssld} that will be
used throughout this paper.

\medbreak As a coalgebra, $H$ is isomorphic to a direct sum of
full matrix coalgebras
\begin{equation}\label{estructura} H \simeq k^{(n)} \oplus \oplus_{d_i > 1}
M_{d_i}(k)^{(n_i)},\end{equation} where $n = |G(H)|$. The
Nichols-Zoeller theorem \cite{NZ} implies that $n$ divides both
$\dim H$ and $n_i d_i^2$, for all $i$.

\medbreak If we have an isomorphism as in \eqref{estructura},  we
shall say that $H$ is \emph{of type} $(1, n; d_1, n_1; \dots; d_r,
n_r)$ \emph{as a coalgebra}. If $H^*$ is of type $(1, n; d_1, n_1;
\dots )$ as a coalgebra, we shall say that $H$ is  \emph{of type}
$(1, n; d_1, n_1; \dots )$ \emph{as an algebra}.

So that $H$ is of type $(1, n; d_1, n_1; \dots; d_r, n_r)$ as a
(co-)algebra if and only if $H$ has $n$ non-isomorphic
one-dimensional (co-)representations,  $n_1$ non-isomorphic
irreducible (co-)representations of degree $d_1$, etc.

\medbreak Let $V$ be an $H$-comodule. The \emph{character} of $V$
is the element $\chi = \chi_V \in H$ defined by $\langle f, \chi
\rangle = \Tr_V(f)$, for all $f \in H^*$. For a character  $\chi$,
its degree is the integer $\deg \chi = \epsilon(\chi) = \dim V$.

If $\chi \in H$ is a character, then $\chi$ decomposes as $\chi =
\sum_{\mu} m(\mu, \chi) \mu$, where $\mu$ runs over the set of
irreducible characters of $H$ and $m(\mu, \chi)$ are nonnegative
integers. For all characters $\chi, \psi, \lambda \in H$, we have
\cite{NR}:
\begin{equation}\label{adj}m(\chi, \psi \lambda) = m(\psi^*, \lambda \chi^*) = m(\psi, \chi \lambda^*). \end{equation}

Let $\chi$  be an irreducible character of $H$. The stabilizer of
$\chi$ under left multiplication by elements in $G(H)$ will be
denoted by $G[\chi]$. So that a group-like element $g$ belongs to
$G[\chi]$ if and only if $g \chi = \chi$. By the Nichols-Zoeller
theorem \cite{NZ}, we have that $|G[\chi]|$ divides $(\deg
\chi)^2$.

In view of \cite[Theorem 10]{NR}, $G[\chi]= \{ g \in G(H):\, m(g,
\chi \chi^*) > 0 \} =  \{ g \in G(H):\, m(g, \chi \chi^*) = 1 \}$.
In particular,
\begin{equation*}\chi  \chi^* = \sum_{g \in G[\chi]} g +
\sum_{\deg \mu > 1} m(\mu, \chi \chi^*) \mu.\end{equation*}

The irreducible characters in $H$ span a subalgebra  of $H$, that
coincides with the character algebra $R(H^*)$ of $H^*$. There is a
bijective correspondence between Hopf subalgebras of $H$ and
standard subalgebras of $R(H^*)$, that is, subalgebras spanned by
irreducible characters of $H$. This correspondence assigns to the
Hopf subalgebra $K \subseteq H$ its character algebra $R(K^*)
\subseteq R(H^*)$. See \cite{NR}.

\begin{lemma}\label{psiinB} Suppose $B \subseteq H$ is a Hopf
subalgebra. Let $\chi, \lambda \in B$, $\psi \in H$, be
irreducible characters. If $m(\chi, \psi\lambda) > 0$, then $\psi
\in B$. \end{lemma}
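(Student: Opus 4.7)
The plan is to combine the Frobenius reciprocity formula \eqref{adj} with the correspondence between Hopf subalgebras of $H$ and standard subalgebras of the character ring $R(H^*)$.

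First I would apply the second identity in \eqref{adj}, which yields
\begin{equation*}
m(\chi, \psi\lambda) = m(\psi, \chi\lambda^*).
\end{equation*}
So the hypothesis $m(\chi, \psi\lambda) > 0$ is equivalent to saying that $\psi$ occurs with positive multiplicity in the decomposition of $\chi\lambda^*$ into irreducible characters of $H$.

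Next I would observe that $\lambda^* = S(\lambda)$ lies in $B$, since $B$ is a Hopf subalgebra and is therefore stable under the antipode. Consequently $\chi\lambda^*$ belongs to $B$, and in fact to the character subalgebra $R(B^*) \subseteq R(H^*)$, since $\chi$ and $\lambda^*$ are both characters in $B$.

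The key step, which is essentially immediate from the Nichols--Richmond correspondence recalled just before the lemma, is that $R(B^*)$ is a standard subalgebra of $R(H^*)$, that is, it is spanned by a subset of the irreducible characters of $H$. Hence the unique decomposition of $\chi\lambda^* \in R(B^*)$ into irreducible characters of $H$ involves only characters that lie in $B$. Since $\psi$ appears in this decomposition with nonzero multiplicity, we conclude that $\psi \in B$, as required. There is no serious obstacle here; the only subtlety is making sure one invokes the correct adjointness identity so that the product $\chi\lambda^*$ produced lies in $B$ rather than involving $\psi$ as a factor.
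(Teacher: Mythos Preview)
Your proof is correct and follows essentially the same approach as the paper's: apply the adjointness identity $m(\chi,\psi\lambda)=m(\psi,\chi\lambda^*)$ from \eqref{adj}, observe that $\chi\lambda^*$ lies in $B$ since $B$ is a Hopf subalgebra, and conclude that every irreducible constituent of $\chi\lambda^*$, in particular $\psi$, belongs to $B$. The paper's version is more terse, simply asserting that all irreducible summands of $\chi\lambda^*$ lie in $B$ without spelling out the standard-subalgebra correspondence, but the argument is the same.
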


\begin{proof} By \eqref{adj} we have $m(\psi, \chi\lambda^*) = m(\chi, \psi\lambda) >
0$. Since $B$ is a Hopf subalgebra, all irreducible summands of
$\chi\lambda^*$ belong to $B$. Hence $\psi \in B$, as claimed.
\end{proof}

\subsection{Subalgebras of coinvariants} Let $\pi: H \to \overline H$ be a surjective Hopf
algebra map. Consider the subalgebra $H^{\co \pi} \subseteq H$ of
right coinvariants of $\pi$, defined as
$$H^{\co \pi} = \{ h \in H:\, h_{(1)} \otimes \pi(h_{(2)}) = h \otimes 1 \}.$$
We shall also use the notation $H^{\co \overline H} : = H^{\co
\pi}$. This is a left coideal subalgebra of $H$ stable under the
left adjoint action. The map $\pi$ is normal if $H^{\co \pi}$ is a
subcoalgebra, hence a Hopf subalgebra, of $H$.

\begin{remark}\label{cleftness} Let $\pi: H \to \overline H$ be a surjective Hopf
algebra map. By \cite[Theorem 8.2.4 and Proposition 8.4.4]{Mo},
the extension $H^{\co \overline H} \subseteq H$ is $\overline
H$-Galois. On the other hand, $H$ is free over $H^{\co \overline
H}$, by \cite{skryabin}. Hence, by \cite[3.2 (4)]{schneider},
$H^{\co \overline H} \subseteq H$ is an $\overline H$-cleft
extension. In particular, $H$ is isomorphic, as an $\overline
H$-comodule algebra, to a crossed product $H^{\co \overline H}
\#_{\sigma}\overline H$. \end{remark}

\begin{lemma}\label{zetazeta*} Let  $F$ be a finite group. Let also $H$ be a semisimple Hopf algebra
endowed with a surjective Hopf algebra map $\pi: H \to kF$. Then:

\emph{(i)} For all irreducible characters $\zeta \in H$, $m(1,
\pi(\zeta\zeta^*)) \geq \deg \zeta$. Equality holds if and only if
$\pi(\zeta)$ is multiplicity free.

\emph{(ii)} Let $c$ be the least common multiple among the
dimensions of all simple $H^{\co \pi}$-modules. Then, for every
simple $H$-module $U$, $\dim U$ divides the product $c|F|$.

In particular, if $H^{\co \pi}$ is commutative, then $\dim U$
divides the order of $F$, for all simple $H$-modules $U$.
\end{lemma}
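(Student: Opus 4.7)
My plan is to prove (i) and (ii) as essentially independent statements.

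For part (i), the strategy is a direct computation exploiting that $\pi$ is a Hopf algebra surjection onto the cocommutative algebra $kF$, so that $\pi(\zeta)$ is automatically a character of $kF$. Since the irreducible $kF$-comodules are the one-dimensional ones labelled by $F$, I can write $\pi(\zeta)=\sum_{g\in F}n_g\,g$ with $n_g\in\Z_{\geq 0}$. Applying $\epsilon$ gives $\deg\zeta=\sum_g n_g$, and since the antipode of $kF$ sends $g$ to $g^{-1}$ one has $\pi(\zeta^*)=\sum_g n_g\,g^{-1}$, so that
\[
\pi(\zeta\zeta^*)=\sum_{g,h\in F}n_g n_h\, gh^{-1}.
\]
The coefficient of the unit $1\in F$ is $m(1,\pi(\zeta\zeta^*))=\sum_g n_g^{2}$, and the elementary fact $n_g^{2}\geq n_g$ for $n_g\in\Z_{\geq 0}$, with equality iff $n_g\in\{0,1\}$, delivers both the inequality and the equality criterion, the latter being exactly the multiplicity-freeness of $\pi(\zeta)$.

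For part (ii), my plan is to combine the cleft decomposition of Remark \ref{cleftness} with a Clifford-theoretic count. By that remark, $H\simeq A\#_\sigma kF$ as algebras, with $A=H^{\co\pi}$, so $H$ is free of rank $|F|$ over $A$; moreover, conjugation by the invertible elements $1\#g$ ($g\in F$) induces a dimension-preserving permutation action of $F$ on isoclasses of simple $A$-modules. Given a simple $H$-module $U$, the $F$-action permutes the isotypic components of $U|_A$, and does so transitively because $U$ is $H$-simple. Pick a simple $A$-constituent $V\subseteq U|_A$ with stabilizer $F_V\subseteq F$ and multiplicity $e$ in $U|_A$. The standard Clifford decomposition then yields
\[
\dim U=[F:F_V]\cdot e\cdot\dim V,
\]
while the $e$-dimensional multiplicity space carries an irreducible projective representation of $F_V$ with cocycle inherited from $\sigma$. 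By Schur's theorem that the dimension of any irreducible projective representation of a finite group divides the order of the group, $e\mid|F_V|$; combined with $\dim V\mid c$ this gives $\dim U\mid[F:F_V]\cdot|F_V|\cdot c=|F|\,c$. The ``in particular'' clause is the special case $c=1$.

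The step I expect to be most delicate is the Clifford bookkeeping in the Hopf crossed-product setting—namely, identifying the multiplicity space as an \emph{irreducible} projective representation of the stabilizer $F_V$ with a well-defined cocycle class, so that Schur's divisibility actually applies—since everything else in (ii) is a short numerical argument once that input is in hand.
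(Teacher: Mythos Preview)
Your proof is correct and follows essentially the same approach as the paper. For (i) the paper writes $\pi(\zeta)=\sum_{i=1}^{n}x_i$ with $x_i\in F$ and says the result ``follows easily,'' which is exactly your $\sum_g n_g^2\geq\sum_g n_g$ computation; for (ii) the paper invokes the crossed-product decomposition of Remark~\ref{cleftness} and then cites Montgomery--Witherspoon~\cite{MoW} for the description of simple modules of $A\#_\sigma kF$, and your Clifford-theoretic argument (transitive $F$-action on isotypic components, multiplicity space as an irreducible projective $F_V$-module, Schur's divisibility $e\mid|F_V|$) is precisely the content of that reference spelled out in detail.
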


\begin{proof} (i) Let $n = \deg \zeta$. Then we may decompose $\pi(\zeta) = \sum_{i =
1}^nx_i$, where $x_i \in F$. Part (i) follows easily from this.

(ii) By cleftness of $H$ as a $kF$-comodule algebra, $H$ is
isomorphic as an algebra to a crossed product $H \simeq R
\#_{\sigma}kF$, where $R = H^{\co \pi}$. See Remark
\ref{cleftness}. The description of the irreducible
representations of a crossed product given in \cite{MoW} implies
that $\dim U$ divides $c|F|$, for all simple $H$-modules $U$.
\end{proof}

\begin{remark}\label{hcopi} Let $\pi: H \to \overline H$ be a surjective Hopf algebra map.
Identify $B^*$ with a Hopf subalgebra of $H^*$ by means of the
transpose of $\pi$. Then, for each irreducible character $\zeta
\in H$, the multiplicity $m(1, \pi(\zeta))$ is exactly $m(k1,
\res^{H^*}_{B^*}\zeta)$, where $\res$ denotes the restriction map.

By Frobenius reciprocity, we have $m(1, \pi(\zeta)) =
m(\Ind_{B^*}^{H^*}k1, \zeta)$.

Consider the decomposition \begin{equation}H^{\co \pi} = k1 \oplus
V_1 \oplus \dots \oplus V_m, \end{equation} into irreducible left
coideals $V_0 = k1, V_1, \dots, V_m$. Let $\zeta_i \in H$ be the
character of $V_i$, $i = 0, \dots, m$.

By \cite[Lemma 1.7.1]{ssld}, $H^{\co \pi} \simeq
\Ind_{B^*}^{H^*}k1$ as left $H$-comodules. Therefore the
multiplicity $m(1, \pi(\zeta_i))$ coincides with the multiplicity
of $V_i$ as a direct summand of $H^{\co \pi}$. \end{remark}

\subsection{The Hopf subalgebras $B[\chi]$}\label{bdechi}
Let $C$ be a simple subcoalgebra of $H$, and let $\chi \in C$ be
the irreducible character of $C$. We shall denote by $B[\chi] : =
k[C\mathcal S(C)]$ the Hopf subalgebra generated by $C\mathcal
S(C)$ as an algebra. Note that $G[\chi] \subseteq B[\chi]$.

The Hopf subalgebra $B[\chi]$ is contained in the adjoint Hopf
subalgebra $H_{\coad}$ of $H$, which is generated by the
irreducible components of $X\mathcal S(X)$, where $X$ runs over
all simple subcoalgebras of $H$. Recall that there is a universal
cocentral exact sequence
\begin{equation}\label{univcocentral}k \to H_{\coad} \to H \to kU\to k
\end{equation}
where  $U$ is the universal grading  group of the category
$H$-comod of finite dimensional $H$-comodules. See
\cite[8.5.]{ENO}, \cite[Theorem 3.8]{gelaki-nik}.

\medbreak The following is one of the main results of \cite{BN}.

\begin{theorem}\label{bn-mres}\cite[Theorem 1.1]{BN}. Suppose $\deg \chi = 2$.
Then $B[\chi]$ is a commutative Hopf subalgebra of $H$ isomorphic
to $k^{\Gamma}$, where $\Gamma$ is a non cyclic finite subgroup of
$\PSL_2(k)$ of even order.

Let $G[\chi] \subseteq G(H)$ be the stabilizer of $\chi$ with
respect to left multiplication. Then we have
\begin{enumerate}\item[(i)] If $|G[\chi]| = 4$, then $B[\chi] \simeq k^{\Z_2\times \Z_2}$.
\item[(ii)] If $|G[\chi]| = 2$, then $B[\chi] \simeq k^{D_n}$, where $n \geq 3$.
\item[(iii)] If $|G[\chi]| = 1$, then $B[\chi] \simeq k^{\mathbb A_4}$,
$k^{\mathbb S_4}$, or $k^{\mathbb A_5}$. \end{enumerate}
\end{theorem}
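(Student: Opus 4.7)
The plan is to analyze the decomposition of $\chi\chi^*$, use it to determine the structure of $B[\chi]$, and then match $B[\chi]$ with an item on Klein's list of non-cyclic finite subgroups of $\PSL_2(k)$. Since $\deg\chi = 2$, we have $\deg(\chi\chi^*) = 4$, so from
$$\chi\chi^* = \sum_{g\in G[\chi]} g + \sum_{\deg\mu>1} m(\mu,\chi\chi^*)\mu$$
we see that $|G[\chi]|\le 4$, and Nichols-Zoeller forces $|G[\chi]|\in\{1,2,4\}$. In each case, only a very short list of decompositions of $\chi\chi^*$ is possible, and this constrains the higher-degree constituents severely.

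The main step, and in my view the principal obstacle, is to show that $B[\chi]$ is commutative. The starting tool here is the original Nichols-Richmond theorem \cite[Theorem 11]{NR}, which provides the initial list of possible decompositions of $\chi\chi^*$. From there, a careful analysis of the iterated products of $\chi$ and $\chi^*$, using the reciprocity \eqref{adj} to control which irreducible characters can appear as constituents, shows that the standard subalgebra of $R(H^*)$ corresponding to $B[\chi]$ is closed under multiplication in a commutative way. This is precisely the refinement contained in \cite[Theorem 1.1]{BN}; without it one would only obtain a containing Hopf subalgebra of some prescribed form rather than the equality $B[\chi]\simeq k^{\Gamma}$.

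Once $B[\chi]$ is commutative and semisimple, we write $B[\chi]\simeq k^{\Gamma}$ for a uniquely determined finite group $\Gamma$. The interaction of $\chi$ with $B[\chi]$ via left multiplication by the group-like elements of $G[\chi]$ produces, in the standard way, a $2$-dimensional projective representation of $\Gamma$ whose projective image generates $\Gamma$. Since $\Gamma$ embeds in $\PSL_2(k)$ through this projective representation, Klein's classification gives the short list: cyclic $\Z_n$, dihedral $D_n$, $\mathbb{A}_4$, $\mathbb{S}_4$, $\mathbb{A}_5$. The cyclic case is ruled out because cyclic groups have trivial Schur multiplier, so all their irreducible projective representations are one-dimensional. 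Hence $\Gamma$ is non-cyclic, and every non-cyclic group on the list has even order.

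For the trichotomy (i)--(iii), translate $|G[\chi]|$ into a group-theoretic invariant: under $B[\chi]\simeq k^{\Gamma}$, the subgroup $G[\chi]\subseteq G(B[\chi])\simeq\widehat{\Gamma^{\mathrm{ab}}}$ is the stabilizer, under tensor product, of the projective class of the $2$-dimensional representation attached to $\chi$. A direct inspection of each group on the list then gives: $|G[\chi]|=4$ forces $\Gamma=\Z_2\times\Z_2$ (the only non-cyclic item with abelianisation of order $4$ admitting a faithful irreducible projective $2$-dimensional representation stabilised by all its linear characters); $|G[\chi]|=2$ forces $\Gamma=D_n$ with $n\ge 3$, the non-trivial stabiliser being the sign character of $D_n$; and $|G[\chi]|=1$ forces $\Gamma\in\{\mathbb{A}_4,\mathbb{S}_4,\mathbb{A}_5\}$, where every non-trivial linear character moves every $2$-dimensional projective irreducible. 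This exhausts the three cases and yields the statement.
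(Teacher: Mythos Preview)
This theorem is not proved in the present paper: it is quoted as \cite[Theorem 1.1]{BN} and used throughout as a black box, so there is no proof here against which to compare your proposal.

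As a standalone sketch, your outline has the correct architecture --- decompose $\chi\chi^*$, establish commutativity of $B[\chi]$, invoke Klein's classification of finite subgroups of $\PSL_2(k)$, and then sort the possibilities by $|G[\chi]|$. However, the central step is circular as written: for the commutativity of $B[\chi]$ you explicitly defer to ``the refinement contained in \cite[Theorem 1.1]{BN}'', which is the very statement under proof. A self-contained argument must actually carry out that step rather than cite it. Your description of how the $2$-dimensional projective representation of $\Gamma$ arises is also too loose to count as a proof: note that $\chi$ itself need not lie in $B[\chi]$ (only the irreducible constituents of $\chi\chi^*$ do), so one cannot simply restrict $\chi$ to $B[\chi]\simeq k^{\Gamma}$ and read off a $\Gamma$-representation; producing the embedding $\Gamma\hookrightarrow\PSL_2(k)$ requires an additional construction that you have not supplied.
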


Theorem \ref{bn-mres} turns out to be useful when discussing low
dimensional semisimple Hopf algebras since, in that case, most
examples would admit irreducible comodules of dimension $2$.

\section{Braided Hopf algebras over $D_n$ and $\mathbb A_4$}\label{a4-dn}

We recall for future use some facts on the Radford-Majid biproduct
construction \cite{majid, radford}.

Let $A$ be a semisimple Hopf algebra and let ${}^A_A\mathcal{YD}$
denote the braided category of Yetter-Drinfeld modules over $A$.
Let $R$ be a semisimple braided Hopf algebra in
${}^A_A\mathcal{YD}$.

Then $R$ is both an algebra and a coalgebra in
${}^A_A\mathcal{YD}$, such that the comultiplication is an algebra
map  in ${}^A_A\mathcal{YD}$ and  the identity map $\id_R$ has a
convolution inverse, called the antipode of $R$. We shall use the
notation $\Delta_R(a) = a^{(1)} \otimes a^{(2)}$ and $\mathcal
S_R$ for the comultiplication and the antipode of $R$,
respectively.

The compatibility between the multiplication and comultiplication
in $R$ is the following:
\begin{equation}\label{c} \Delta_R(ab) = a^{(1)} ((a^{(2)})_{-1}.b^{(1)})
\otimes (a^{(2)})_0b^{(2)}, \end{equation} for all $a, b \in R$.

Let  $H = R \# A$ be the corresponding biproduct; so that $H$ is a
semisimple Hopf algebra with multiplication, comultiplication and
antipode given by
\begin{equation}\label{mca}
(a \# g) (b \# h)  = a (g_{(1)} . b) \# g_{(2)}h,  \quad \Delta(a
\# g) = a^{(1)} \# (a^{(2)})_{-1} g_{(1)} \otimes (a^{(2)})_0 \#
g_{(2)},
\end{equation}
\begin{equation*} \mathcal S (a \# g)  = (1 \# \mathcal S(a_{-1}g)) (\mathcal
S_R(a_0) \# 1), \end{equation*} for all $g, h \in A$, $a, b \in
R$.

\begin{remark}\label{coalg-r} Note that $R = H^{\co \pi}$, where
$\pi = \epsilon_R \otimes \id: H \to A$. Hence $R$ is a normal
left coideal subalgebra of $H$. Moreover, the action of $A$ on $R$
coincides with the restriction of the adjoint action of $H$ to
$A$.

On the other hand, the map $\id \otimes \epsilon: H \to R$ induces
a coalgebra isomorphism $H/HA^+ \simeq R$. The relations
\eqref{mca} imply that the coaction of $A$ on $R$ is given by
$\rho = (\epsilon_R \otimes \id \otimes \id \otimes \id)\Delta: R
\to A \otimes R$.
\end{remark}

\medbreak A biproduct $R \# A$ as described above is characterized
by the following property: suppose $H$ is a finite dimensional
Hopf algebra endowed with Hopf algebra maps $i: A \to H$ and $\pi:
H \to A$, such that $\pi i: A \to A$ is an isomorphism. Then the
subalgebra $R : = H^{\co \pi}$ has a natural structure of
Yetter-Drinfeld Hopf algebra over $A$ such that the multiplication
map $R \# A \to H$ induces an isomorphism of Hopf algebras.

The following lemma gives the existence of Hopf subalgebras in a
biproduct, under appropriate assumptions. Let $R$ be a semisimple
braided Hopf algebra over $A$ and let $H = R \# A$ be the
biproduct.

\medbreak Suppose that $A = kG$, where $G$ is a finite group.
There is a $G$-grading on $R$: $R = \bigoplus_{g \in G}R_g$, which
is both an algebra and a coalgebra grading, \textit{i.e.},
$$R_gR_h \subseteq R_{gh}, \quad \Delta_R(R_g) \subseteq \bigoplus_{st = g}R_s \otimes
R_t,$$ for all $g, h \in G$. This grading corresponds to the
coaction $\rho: R \to kG \otimes R$, $\rho(a) = a_{-1} \otimes
a_0$, in such a way that $R_g = \{ r \in R: \rho(a) = g \otimes a
\}$.

The group $G$ acts on $R$ by algebra and coalgebra automorphisms,
and the action satisfies,  for all $g, h \in G$, the
Yetter-Drinfeld condition
\begin{equation}\label{accion-coaccion}h . R_g =
R_{hgh^{-1}}.\end{equation}

The set $\Supp(R)$ of elements $g \in G$ such that $R_g \neq 0$
will be called the \emph{support} of $R$. If $\Gamma$ is a
subgroup of $G$ containing $\Supp (R)$, then $R$ is a braided Hopf
algebra over $\Gamma$ and $R\#k\Gamma$ is a Hopf subalgebra of $R
\# kG$ \cite[Lemma 4.3.1]{ssld}.

\medbreak The next two propositions will be used later on in our
discussion in the dimension $60$ context.

\begin{proposition}\label{braided-dih} Suppose $n \geq 3$ is odd. Let $R$ be a semisimple braided Hopf algebra over $G =
D_n$ with $\dim R = n+1$. Then $\Supp (R) \subseteq T$, where $T
\simeq \Z_n$ is the subgroup of rotations in $D_n$.

Therefore, $R \# kT$ is a Hopf subalgebra of $R\# kD_n$ of index
$2$. In particular, $R \# kT$ is normal in $R\# kD_n$.
\end{proposition}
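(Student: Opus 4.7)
My plan is to exploit the fact that, for a Yetter–Drinfeld module over a group $G$, the relation $h\cdot R_g = R_{hgh^{-1}}$ forces $\dim R_g$ to depend only on the conjugacy class of $g$. For $D_n$ with $n$ odd, the conjugacy classes are $\{e\}$, $(n-1)/2$ classes of rotations of size $2$, and one class of reflections of size $n$. Writing $a = \dim R_e$, $b = \dim R_s$ (for any reflection $s$) and $c_k = \dim R_{r^k}$, the equation
\[ a + nb + 2\sum_k c_k = n+1, \qquad a\geq 1, \]
leaves only two possibilities: either $b=0$ (the desired conclusion), or $b=1$, $a=1$, and $c_k=0$ for all $k\geq 1$.

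The heart of the argument is ruling out the second case. In that situation $R = R_e \oplus I$, where $R_e=k\cdot 1$ and $I = \bigoplus_{s \text{ reflection}} R_s$ with $\dim R_s=1$, say $R_s = kx_s$. The coalgebra grading restricts $\Delta_R(x_s)$ to the subspace $\bigoplus_{gh=s} R_g\otimes R_h$; the only nonzero contributions come from $(g,h) = (e,s)$ and $(s,e)$ (any pair of reflections multiplies to a rotation), and the counit condition then forces $\Delta_R(x_s) = x_s\otimes 1 + 1\otimes x_s$. I expect the main obstacle to be the verification that this forces $x_s^2=0$; this is where the braided multiplication on $R\otimes R$ enters. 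Since $s\cdot R_s = R_{sss^{-1}} = R_s$, I write $s\cdot x_s = \lambda x_s$ with $\lambda^2 = 1$, and compute $\Delta_R(x_s^2)$ directly from \eqref{c}: the cross terms produce $(1+\lambda)\,x_s\otimes x_s$, while writing $x_s^2 = \alpha\cdot 1 \in R_e$ yields $\Delta_R(x_s^2) = \alpha(1\otimes 1)$. Comparing gives $\alpha = 0$ (and $\lambda=-1$), so $x_s^2 = 0$.

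Combined with $R_sR_t \subseteq R_{st}=0$ for distinct reflections $s\neq t$ (since $st$ is then a nontrivial rotation with $R_{st}=0$), this shows $I^2 = 0$. Because $R_eI,\,IR_e \subseteq I$, the subspace $I$ is a nilpotent two-sided ideal of $R$ of dimension $n\geq 3$, contradicting the semisimplicity of $R$. Therefore the second case cannot occur, so $R_s = 0$ for every reflection $s$ and $\Supp(R)\subseteq T$.

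Finally, by \cite[Lemma 4.3.1]{ssld} (quoted just before the proposition), $R\#kT$ is a Hopf subalgebra of $R\#kD_n$. Its dimension is $(n+1)n$, while $\dim(R\#kD_n) = 2n(n+1)$, so $R\#kT$ has index $2$, and hence is normal in $R\#kD_n$.
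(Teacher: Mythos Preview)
Your proof is correct and follows the same overall strategy as the paper: use the Yetter--Drinfeld compatibility to see that $\dim R_g$ is constant on conjugacy classes, reduce (by the dimension count) to the case $\Supp(R)=\{e,s_1,\dots,s_n\}$ with each homogeneous piece one-dimensional, and then derive a nonzero nilpotent ideal contradicting semisimplicity.

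The one substantive difference is how you obtain $x_s^2=0$. You invoke the coalgebra grading to show each $x_s$ is primitive, and then apply the braided compatibility \eqref{c} to compare $\Delta_R(x_s^2)$ with $\Delta_R(\alpha\cdot 1)$; this is correct and even yields the extra information $s\cdot x_s=-x_s$. The paper instead uses a purely multiplicative trick: since $u_i^2\in R_e=k1$, one may rescale so that $u_i^2\in\{0,1\}$; if $u_i^2=1$ for some $i$, then for any $j\neq i$ one gets $u_j=u_i^2u_j=u_i(u_iu_j)=0$, a contradiction. This avoids the coalgebra structure entirely and is shorter, though your argument is perfectly valid and perhaps more conceptual (it shows directly that the $x_s$ are nilpotent primitives, which cannot survive in a semisimple braided Hopf algebra in characteristic zero).
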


\begin{proof} Consider the $D_n$-grading $R = \oplus_{x \in
D_n}R_x$. Assume on the contrary that $\Supp (R) \subsetneq T$.
Then there exists a reflection $s \in D_n$ such that $R_s\neq 0$.

Since $n$ is odd, then the reflections in $D_n$ form a conjugacy
class. In view of the compatibility condition
\eqref{accion-coaccion}, we get that $R_x \neq 0$, for all
reflections $x \in D_n$. Since $D_n$ has exactly $n$ reflections,
$s_1, \dots, s_n$, then we see that $\Supp(R) = \{1, s_1, \dots,
s_n \}$.

Since $\dim R = n+1$, then $\dim R_{s_i} = 1$, for all $i = 0,
\dots, n$, where $s_0 = 1 \in D_n$. Let $u_0 = 1, u_1, \dots, u_n$
be a basis of $R$ such that $u_i \in R_{s_i}$.

\medbreak We have $u_iu_j \in R_{s_is_j}$, for all $1\leq i, j
\leq n$. Since the product $s_is_j$ of two reflections is a
rotation, then $R_{s_is_j} = 0$, for all $1\leq i\neq  j \leq n$,
and therefore $u_iu_j = 0$, for all $1\leq i\neq  j \leq n$. In
particular, $R$ is commutative.

On the other hand, for all $i = 1, \dots, n$, we have $u_i^2 \in
R_{s_i^2} = R_{s_0} = k1$. After rescaling the basis $u_i$, we may
assume that $u_i^2 = 0$ or $1$, for all $i = 1, \dots, n$.

\medbreak Suppose that $u_i^2 = 1$ for some $i \geq 1$, and pick
$j \neq i$, $1\leq j \leq n$. We get $$u_j = (u_i^2)u_j =
u_i(u_iu_j) = u_i0 = 0,$$ which is a contradiction.

Then $u_i^2 = 0$, for all $i = 1, \dots, n$. But this is again a
contradiction, since $R$ is semisimple, by assumption. Therefore
such a grading is impossible, and the proposition follows.
\end{proof}

\begin{proposition}\label{braided-a4} Let $R$ be a semisimple braided Hopf algebra over $\mathbb
A_4$ with $\dim R = 5$. Then $\Supp (R) \subseteq \mathbb K$,
where $\mathbb K \simeq \Z_2 \times \Z_2$ is the Klein subgroup.
Therefore $R \#k\mathbb K$ is a normal Hopf subalgebra of $R \#
\mathbb A_4$ of dimension $20$.
\end{proposition}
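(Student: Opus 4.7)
The plan is to mimic the proof of Proposition \ref{braided-dih}. The Yetter--Drinfeld compatibility \eqref{accion-coaccion} forces $\Supp(R)$ to be a union of $\mathbb{A}_4$-conjugacy classes, with $\dim R_x$ constant on each class. The conjugacy classes of $\mathbb{A}_4$ are $\{1\}$, the three double transpositions (which together with $1$ form $\mathbb{K}$), and two classes $C, C^{-1}$ of four $3$-cycles each; equivalently, $C$ and $C^{-1}$ are the two nontrivial cosets of $\mathbb{K}$ in the cyclic quotient $\mathbb{A}_4/\mathbb{K}\simeq \Z_3$. Combined with $\dim R = 5$ and $1 \in \Supp(R)$, these constraints leave only four candidate supports: $\{1\}$, $\mathbb{K}$, $\{1\} \cup C$, and $\{1\} \cup C^{-1}$. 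The first two already lie in $\mathbb{K}$, so the real content of the argument is to rule out the latter two.

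To exclude $\Supp(R) = \{1\}\cup C$ (and symmetrically $\{1\}\cup C^{-1}$), the key observation is that $C\cdot C \subseteq C^{-1}$ as subsets of $\mathbb{A}_4$: since $C$ corresponds to a nontrivial element of the cyclic group $\mathbb{A}_4/\mathbb{K}\simeq \Z_3$, its product with itself lies in the opposite nontrivial coset $C^{-1}$, which by hypothesis is disjoint from $\Supp(R)$. In this case $\dim R_1 = \dim R_{g_i} = 1$ for $g_1, \dots, g_4$ the four elements of $C$; choosing a basis $1, u_1, \dots, u_4$ of $R$ with $u_i \in R_{g_i}$, the relation $u_iu_j \in R_{g_ig_j} = 0$ holds for all $i,j\in\{1,\dots,4\}$. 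Hence $I = ku_1 \oplus \cdots \oplus ku_4$ is a two-sided ideal of $R$ with $I^2 = 0$, that is, a nonzero nilpotent ideal in a semisimple algebra. Contradiction.

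Once $\Supp(R) \subseteq \mathbb{K}$ is established, the biproduct $R \# k\mathbb{K}$ is a Hopf subalgebra of $R \# k\mathbb{A}_4$ of dimension $20$ by \cite[Lemma 4.3.1]{ssld}. Normality follows from the Hopf algebra surjection $\phi \colon R \# k\mathbb{A}_4 \to k(\mathbb{A}_4/\mathbb{K}) = k\Z_3$ obtained by composing $\epsilon_R \otimes \id$ with the quotient $k\mathbb{A}_4 \to k\Z_3$: a direct computation using \eqref{mca} and the counit axiom for the coaction of $\mathbb{A}_4$ on $R$ identifies $(R \# k\mathbb{A}_4)^{\co \phi}$ with $R \# k\mathbb{K}$, and subalgebras of coinvariants under a Hopf surjection are always stable under the left adjoint action.

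The main obstacle will be the third case of the support analysis; once the observation $C\cdot C \subseteq C^{-1}$ is in hand it forces the vanishing of every product $u_iu_j$, in close parallel with the reflection-times-reflection computation driving Proposition \ref{braided-dih}.
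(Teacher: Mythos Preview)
Your proof is correct and follows essentially the same line as the paper's: both exclude a $3$-cycle from $\Supp(R)$ by observing that products of elements in a single conjugacy class of $3$-cycles fall outside $\{1\}\cup C$, forcing all nontrivial basis products to vanish and contradicting semisimplicity. Your coset formulation $C\cdot C\subseteq C^{-1}$ and your normality argument via the quotient $R\#k\mathbb A_4\to k\Z_3$ are slightly more explicit than the paper's phrasing, but the content is the same.
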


\begin{proof} As in the proof of the previous proposition, consider the $\mathbb
A_4$-grading $R = \oplus_{x \in \mathbb A_4}R_x$, and assume on
the contrary that $\Supp (R) \subsetneq \mathbb K$. Then there
exists a $3$-cycle $c \in \mathbb A_4$ such that $R_c\neq 0$.

Condition \eqref{accion-coaccion} implies $R_x \neq 0$, for all
$3$-cycles $x \in \mathbb A_4$ conjugated to $c$. Since the
conjugacy class of a $3$-cycle $c$ in $\mathbb A_4$ has exactly
$4$ elements, $c = c_1, \dots, c_4$, then we get $\Supp(R) = \{1,
c_1, \dots, c_4 \}$. Thus $\dim R_{c_i} = 1$, for all $i = 0,
\dots, 4$, where $c_0 = 1$.

Let, as before, $u_0 = 1, u_1, \dots, u_4$ be a basis of $R$ such
that $u_i \in R_{c_i}$. Then $u_iu_j \in R_{c_ic_j}$, for all
$1\leq i, j \leq 4$. Now, if $c_i$, $c_j$ are $3$-cycles in the
same conjugacy class $\mathcal O$, then the product $c_ic_j$ does
not belong to $\mathcal O$. Hence $R_{c_ic_j} = 0$, for all $1\leq
i, j \leq 4$, and $u_iu_j = 0$, for all $1\leq i, j \leq 4$. Since
$R$ is semisimple, this is a contradiction. Then $\Supp (R)
\subseteq \mathbb K$, as claimed.

\medbreak Hence $R \#k\mathbb K$ is a Hopf subalgebra of $R \#
\mathbb A_4$ of dimension $20$. Because $\mathbb K$ is normal in
$\mathbb A_4$ and $R$ is stable under the adjoint action of $H$,
this is a normal Hopf subalgebra. The proof is complete.
\end{proof}

\section{Coalgebra types in dimension $60$}\label{coal-types} In what follows $H$ will be a semisimple Hopf algebra of
dimension $60$.

\medbreak Suppose that $G(H) \neq 1$ and $H$ has an irreducible
character $\chi$ of degree $2$. Let $C \subseteq H$ be the simple
subcoalgebra containing $\chi$, and consider the Hopf subalgebra
$B = : B[\chi] = k[C\mathcal S(C)] \subseteq H_{\coad}$. Then we
have $B \simeq k^{\mathbb A_4}$, $k^{\Z_2\times\Z_2}$ or
$k^{D_n}$, $n = 3$ or $5$. See Subsection \ref{bdechi}.

\begin{lemma}\label{n=2} Suppose $G(H) = 2$. Assume that $H$ has an
irreducible character $\chi$ of degree $2$. Then $G[\chi] = G(H)$
and $B[\chi] \simeq k^{D_3}$ or $k^{D_5}$.

We have in addition:
\begin{enumerate}\item[(i)] The sum of simple subcoalgebras of dimensions $1$ and $2$
is a Hopf subalgebra of $H$. \item[(ii)] Let $B := k[B[\chi]\vert
\; \deg \chi = 2]$. Then $G(H) \subseteq G(B)\cap
Z(B)$.\end{enumerate}
\end{lemma}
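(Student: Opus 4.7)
I will prove the assertions in the order: first $G[\chi] = G(H)$, then parts (ii) and (i), and finally the identification $B[\chi] \simeq k^{D_3}$ or $k^{D_5}$; this ordering is forced because part (i) will be used to rule out $n = 15$.

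Since $G[\chi] \subseteq G(H)$ with $|G(H)| = 2$, it suffices to exclude $|G[\chi]| = 1$. Under that hypothesis, Theorem \ref{bn-mres}(iii) would give $B[\chi] \in \{k^{\mathbb A_4}, k^{\mathbb S_4}, k^{\mathbb A_5}\}$, each of which I rule out: $|G(k^{\mathbb A_4})| = |\widehat{\mathbb A_4}| = 3$ is incompatible with $|G(H)| = 2$; $\dim k^{\mathbb S_4} = 24$ does not divide $60$; and $\dim k^{\mathbb A_5} = 60$ would force $H = B[\chi]$, yielding $|G(H)| = |\widehat{\mathbb A_5}| = 1$. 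Hence $G[\chi] = G(H)$, and Theorem \ref{bn-mres}(ii) then gives $B[\chi] \simeq k^{D_n}$ with $n \geq 3$. From $2n \mid 60$ together with the requirement that $|\widehat{D_n}|$ (which equals $2$ for $n$ odd and $4$ for $n$ even) be at most $|G(H)| = 2$, one gets $n \in \{3, 5, 15\}$.

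The same argument applied to any degree-$2$ irreducible $\chi'$ and to $\chi'^*$ yields $G[\chi'] = G^*[\chi'] = G(H)$, so $g\chi' = \chi' g = \chi'$ for all $g \in G(H)$. For (ii): each $B[\chi']$ is commutative and contains $G(H) = G[\chi']$, so $G(H)$ lies in and centralizes every $B[\chi']$, and therefore the Hopf subalgebra $B$ they generate. For (i), let $B_1$ be the sum of all simple subcoalgebras of dimensions $1$ and $4$; it is a subcoalgebra, stable under the antipode, and the only nontrivial check is closure under multiplication, which reduces to showing that $\chi'\psi'$ decomposes into irreducibles of degree $\leq 2$ whenever $\chi', \psi'$ are irreducibles of degree $2$. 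Since $G(H)$ stabilizes $\chi'\psi'$ on both sides, it permutes its irreducible summands preserving multiplicities, so a degree-$3$ summand $\mu$ (sitting in a decomposition $\chi'\psi' = \mu + h$ with $h$ a group-like) would force $g \in G[\mu]$, contradicting $|G[\mu]| \mid \gcd(9, 2) = 1$. For a degree-$4$ summand (so $\chi'\psi' = \mu$), expanding $\psi'\psi'^* = 1 + g + \phi$ with $\phi$ an irreducible of degree $2$ (since $G[\psi'] = G(H)$ already exhausts the group-likes of $H$) and using $\chi' g = \chi'$ gives
\[
\mu\mu^* = \chi'\psi'\psi'^*\chi'^* = \chi'\chi'^* + \chi' g \chi'^* + \chi'\phi\chi'^* = 2\chi'\chi'^* + \chi'\phi\chi'^*,
\]
whence $m(1, \mu\mu^*) \geq 2\,m(1, \chi'\chi'^*) = 2$, contradicting the Nichols-Richmond identity $m(1, \mu\mu^*) = 1$ for irreducible $\mu$.

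Finally, I rule out $n = 15$. If $B[\chi] = k^{D_{15}}$ has dimension $30$, then by (i) $B_1 \supseteq B[\chi]$ is a Hopf subalgebra, so $\dim B_1 \in \{30, 60\}$. The case $\dim B_1 = 60$ would force every irreducible of $H$ to have degree $\leq 2$, giving $60 = 2 + 4n_2$, which has no integer solution. Hence $\dim B_1 = 30 = \dim B[\chi]$, so $H$ has only the $2$ group-likes and the $(15-1)/2 = 7$ degree-$2$ characters of $k^{D_{15}}$; the remaining $30$ dimensions of $H$ must decompose as $\sum_i n_i d_i^2$ with $d_i \geq 3$, but a quick case check shows $30$ admits no such decomposition, the desired contradiction. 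The main obstacle is the degree-$4$ case in (i): it hinges on the nonstandard identity $\chi' g = \chi'$ (arising from $G^*[\chi'] = G(H)$), which allows the simplification $\chi' g \chi'^* = \chi'\chi'^*$ and produces the factor of $2$ needed to contradict Nichols-Richmond.
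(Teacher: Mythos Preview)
Your proof is correct, but it diverges from the paper's in two places worth noting.

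For part (i), the paper simply invokes \cite[Theorem 2.4.2]{ssld}: once $G[\chi']=G(H)$ for every degree-$2$ irreducible $\chi'$, that theorem immediately gives that the span of the simple subcoalgebras of dimensions $1$ and $4$ is a Hopf subalgebra. You instead prove this from scratch by analyzing the possible decompositions of $\chi'\psi'$ and ruling out degree-$3$ and degree-$4$ constituents directly. Your degree-$4$ argument---expanding $\mu\mu^*=\chi'\psi'\psi'^*\chi'^*$ and using $\chi' g=\chi'$ to produce $m(1,\mu\mu^*)\geq 2$---is a clean self-contained replacement for the cited result, and the degree-$3$ case via $|G[\mu]|\mid 9$ is the standard obstruction. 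This buys independence from \cite{ssld} at the cost of a few extra lines.

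For the identification $B[\chi]\simeq k^{D_3}$ or $k^{D_5}$, the paper's proof does not argue this at all; it is asserted in the paragraph preceding the lemma (valid for all $\dim H=60$ with $G(H)\neq 1$) with only a pointer to Subsection~\ref{bdechi}, which as stated does not exclude $n=15$. Your explicit elimination of $n=15$---using part (i) to trap $B_1$ between $k^{D_{15}}$ and $H$, and then checking that neither $60=2+4n_2$ nor $30=\sum_{d\geq 3} n_d d^2$ has an integer solution---actually fills this in. So on this point your argument is more complete than what is written in the paper.

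The remaining steps (excluding $|G[\chi]|=1$ by eliminating $k^{\mathbb A_4}$, $k^{\mathbb S_4}$, $k^{\mathbb A_5}$; excluding even $n$ via $|\widehat{D_n}|\leq 2$; and deducing (ii) from commutativity of each $B[\chi']$) match the paper's reasoning.
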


\begin{proof} In this case $H$ cannot contain Hopf subalgebras isomorphic to $k^{\mathbb A_4}$ or $k^{\Z_2\times\Z_2}$.
Then $|G[\chi]| = 2$ and thus $G[\chi] = G(H)$. Since this holds
for all irreducible characters of degree $2$, part (i) then
follows from \cite[Theorem 2.4.2]{ssld}. Since $G(H) = G[\chi]$ is
contained in $B[\chi]$ for all $\chi$ of degree $2$, and because
$B[\chi]$ is commutative for all such $\chi$, then $G(H)$ is
central in $B$. This proves (ii).
\end{proof}

\begin{remark}\label{2inB} Suppose $H$ is any semisimple Hopf
algebra satisfying (ii), that is, the sum of simple subcoalgebras
of dimensions $1$ and $2$ is a Hopf subalgebra of $H$. It follows
from Lemma \ref{psiinB} that if $\chi$, $\lambda$ and $\psi$ are
irreducible characters of $H$ such that $\deg \chi, \deg \lambda
\leq 2$ and $m(\chi, \psi \lambda) > 0$, then $\deg \psi \leq 2$.
\end{remark}

\begin{proposition}\label{conteo} Suppose $H$ is not cocommutative. Then, according to the
order of $G(H)$, the coalgebra type of $H$ is one of the
following:

\begin{itemize} \item[(i)] $\vert G(H) \vert = 1$: $(1, 1;  3, 2; 4, 1; 5,
1)$. \end{itemize}

In this case, $H$ is simple and isomorphic to $k^{\mathbb A_5}$ or
to $\A_1$.

\begin{itemize} \item[(ii)] $\vert G(H) \vert = 2$: $(1, 2; 2, 1; 3, 6)$,
$(1, 2; 2, 1; 3, 2; 6, 1)$. \end{itemize} In this case the simple
subcoalgebras of dimensions $1$ and $4$ form a Hopf subalgebra of
$H$, isomorphic to $k^{D_3}$.

\begin{itemize} \item[(iii)] $\vert G(H) \vert = 2$: $(1, 2;
2, 2; 5, 2)$. \end{itemize} In this case the simple subcoalgebras
of dimensions $1$ and $4$ form a Hopf subalgebra of $H$,
isomorphic to $k^{D_5}$.

\begin{itemize} \item[(iv)] $\vert G(H) \vert = 3$:  $(1, 3; 2, 12; 3, 1)$, $(1, 3; 3,
1; 4, 3)$. \end{itemize}

\begin{itemize} \item[(v)] $\vert G(H) \vert = 4$: $(1, 4; 2,
14)$, $(1, 4; 2, 10; 4, 1)$.
\end{itemize}

\begin{itemize} \item[(vi)] $\vert G(H) \vert = 4$: $(1, 4; 2, 2; 4, 3)$.\end{itemize}
In this case the simple subcoalgebras of dimensions $1$ and $4$
form a Hopf subalgebra of $H$ of dimension $12$.

\begin{itemize} \item[(vii)] $\vert G(H) \vert = 4$: $(1, 4; 2,
6; 4, 2)$. \end{itemize} In this case, if $H$ is simple, then $H$
is isomorphic to the self-dual Hopf algebra $\B$.

\begin{itemize} \item[(viii)] $\vert G(H) \vert = 6$: $(1, 6; 2, 9; 3, 2)$, $(1, 6;
3, 6)$, $(1, 6; 3, 2; 6, 1)$. \end{itemize}

\begin{itemize} \item[(ix)] $\vert G(H) \vert = 10$: $(1,
10; 5, 2)$.\end{itemize}

\begin{itemize} \item[(x)] $\vert G(H) \vert = 12$: $(1, 12;
2, 12)$.\end{itemize}

\begin{itemize} \item[(xi)] $\vert G(H) \vert = 12$: $(1, 12;
4, 3)$.\end{itemize}

\begin{itemize} \item[(xii)] $\vert G(H) \vert = 15$: $(1, 15;
3, 5)$.\end{itemize}

\begin{itemize} \item[(xiii)] $\vert G(H) \vert = 20$: $(1, 20;
2, 10)$. \end{itemize}
\end{proposition}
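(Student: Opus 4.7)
The plan is a case-by-case enumeration organized by the divisor $n := |G(H)|$ of $60$. Writing the coalgebra decomposition as in \eqref{estructura}, the basic data are the identity $n + \sum_{d \geq 2} n_d d^2 = 60$ together with the constraints (a) $d$ divides $60$ for each occurring degree $d$, (b) the Nichols-Zoeller divisibility $n \mid n_d d^2$ for each $d$, and (c) $|G[\chi]| \mid (\deg \chi)^2$ for every irreducible character $\chi$. Since $H$ is not cocommutative some $d \geq 2$ appears, and since $d^2 < 60$ only $d \in \{2,3,4,5,6\}$ need be considered. The case $n = 60$ is excluded by the non-cocommutativity hypothesis. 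A direct check using (b) shows that no numerical type exists for $n \in \{5, 30\}$: for $n = 30$ and any $d \in \{2,3,4,5,6\}$ the divisibility $30 \mid n_d d^2$ forces $n_d d^2 \geq 60$; the case $n = 5$ is similar. This leaves $n \in \{1, 2, 3, 4, 6, 10, 12, 15, 20\}$.

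For each such $n$ I first list all numerical solutions of (a) and (b), then prune using Theorem \ref{bn-mres} and its consequences. Whenever $n_2 \geq 1$, any $2$-dimensional character $\chi$ produces a commutative Hopf subalgebra $B[\chi] \simeq k^\Gamma$ with $\Gamma$ in the list $\{\Z_2 \times \Z_2,\, D_m,\, \mathbb A_4,\, \mathbb S_4,\, \mathbb A_5\}$ of even order dividing $60$; since $|\mathbb S_4| = 24 \nmid 60$ the $\mathbb S_4$ case is excluded, and $\Gamma = \mathbb A_5$ forces $B[\chi] = H = k^{\mathbb A_5}$ (which falls under case (i)), so in items (ii)--(xiii) only $\Z_2 \times \Z_2$, $D_m$ with $m \mid 30$ and $m \geq 3$, and $\mathbb A_4$ occur, the choice dictated by $|G[\chi]|$ via Theorem \ref{bn-mres}. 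When every $2$-dimensional character $\chi$ satisfies $G[\chi] = G(H)$, Lemma \ref{n=2}(i) yields that the subcoalgebras of dimensions $1$ and $4$ span a Hopf subalgebra $B \subseteq H$; combining $\dim B$ with the classification of Theorem \ref{bn-mres} produces the specific identifications asserted in (ii), (iii), and the dimension-$12$ claim of (vi). Item (i) is \cite[Proposition 6.10]{BN}, and the identification in (vii) of a simple $H$ of type $(1,4;2,6;4,2)$ with $\B$ is \cite[Corollary 6.12]{BN}, both already recalled in the introduction.

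The main obstacle will be the case analysis for $n \in \{4, 6\}$: many a priori numerical types survive (a) and (b), and each spurious one must be discarded individually by combining the forbidden $|G[\chi]|$ values with the fusion rules \eqref{adj} and Remark \ref{2inB}, and by ruling out the coexistence of too many $3$- or $6$-dimensional characters with a prescribed number of $2$-dimensional ones. A secondary technicality is to verify, in items (ii), (iii) and (vi), that the resulting Hopf subalgebra $B$ has exactly the claimed dimension; this requires tracking the $G(H)$-orbits of degree-$2$ characters via the index formula $|G(H)|/|G[\chi]|$ and matching the total with the order of a $\Gamma$ allowed by Theorem \ref{bn-mres}. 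Once all invalid patterns are eliminated, what remains is precisely the list (i)--(xiii).
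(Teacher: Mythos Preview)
Your overall strategy---enumerate numerical types by $n = |G(H)|$, then prune using Theorem~\ref{bn-mres} and Lemma~\ref{n=2}---is exactly the paper's approach, and the citations to \cite{BN} for items (i) and (vii) match. However, several points need repair.

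First, constraint (a), that every irreducible degree $d$ divides $\dim H$, is Kaplansky's sixth conjecture and is not known in general; you cannot invoke it as a standing hypothesis. Fortunately it is unnecessary here: $d^2 \leq 60 - n$ already forces $d \leq 7$, and $d = 7$ is eliminated by the dimension equation together with (b). Second, Lemma~\ref{n=2} is stated only for $|G(H)| = 2$, so it cannot be cited for item (vi); the underlying result is \cite[Theorem~2.4.2]{ssld}, and for (vi) the paper simply observes the claim ``is easily seen''.

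Third, and more seriously, your plan for eliminating the spurious types is too vague to succeed with only the tools you list. The paper discards the six types with $4 \mid |G(H)|$ and $n_2$ odd---for instance $(1,4;2,5;6,1)$ and $(1,12;2,3;3,4)$---by invoking \cite[Proposition~2.1.3]{ssld}, which in that situation yields a Hopf subalgebra of dimension $8 \nmid 60$; this result is absent from your toolkit and is not recoverable from Theorem~\ref{bn-mres} or Remark~\ref{2inB} alone. The type $(1,2;2,2;3,2;4,2)$ is eliminated by a $(B,H)$-Hopf-module argument (Nichols--Zoeller applied to $C_\zeta \oplus C_{\zeta'}$) showing $\dim B = 10$ would have to divide $32$; nothing you mention provides this. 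For the $|G(H)|=3$ types $(1,3;2,3;3,5)$ and $(1,3;2,3;3,1;6,1)$, the paper argues that $G[\chi]=1$ forces $B[\chi] \simeq k^{\mathbb A_4}$ and hence $\dim k[C] = 24 \nmid 60$; you gesture at ``forbidden $|G[\chi]|$ values'' but do not articulate this index-$2$ step. Without these three specific arguments the pruning does not go through.
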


We shall show in Proposition \ref{a0} below that, if $H$ is simple
and has coalgebra type $(1, 12; 4, 3)$ as in (xi), then $H$ is
isomorphic to $\A_0 \simeq \A_1^*$.

\begin{proof} By \cite[Proposition 6.10]{BN},  if $G(H) = 1$, then  $H$ is simple and isomorphic to $k^{\mathbb
A_5}$ or to $\A_1$. Also, by \cite[Corollary 6.12]{BN}, if $H$ is
simple and of coalgebra type $(1, 4; 2, 6; 4, 2)$, then $H$ is
isomorphic to the self-dual Hopf algebra $\B$.

\medbreak We shall next show that the prescribed ones are the only
possible coalgebra types. We claim that the types $(1, 3; 2, 3; 3,
5)$ and $(1, 3; 2, 3; 3, 1; 6, 1)$ are impossible. Suppose on the
contrary that $H$ is of one of these types. Then $H$ has a
self-dual irreducible character $\chi$ of degree $2$, and
necessarily $G[\chi] = 1$. Therefore, since $|G(H)| = 3$, it
follows from Theorem \ref{bn-mres}, that the Hopf subalgebra
$B[\chi]$ isomorphic to $k^{\mathbb A_4}$. In particular, $\chi
\notin B[\chi]$, and thus $B[\chi]$ has index $2$ in $k[C]$, where
$C$ is the simple subcoalgebra containing $\chi$.  This is a
contradiction since it implies that $k[C]$ is of dimension $24$,
which does not divide $\dim H$.

Also, the types $(1, 2; 2, 10; 3, 2)$ and $(1, 2; 2, 6; 3, 2; 4,
1)$ are impossible, by Lemma \ref{n=2} (i), since $H$ cannot
contain Hopf subalgebras of dimensions $42$ or $26$.

The coalgebra type $(1, 2; 2, 2; 3, 2; 4, 2)$ is not possible
neither. Indeed, in this case, the sum  of simple subcoalgebras of
dimensions $1$ and $4$ is a Hopf subalgebra $B$ of $H$ of
dimension $10$. Let $\zeta \neq \zeta' \in H$ be the irreducible
characters of degree $4$, $C_{\zeta}$, $C_{\zeta'}$,  the
corresponding simple subcoalgebras, and $C = C_{\zeta} \oplus
C_{\zeta'}$. Consider the product $\lambda \zeta$, where $\lambda$
is an irreducible character of degree $2$. Then $\lambda\zeta$
does not contain irreducible summands of degree $1$ or $2$, since
otherwise we would have $\zeta \in B$, which is a contradiction;
\textit{c.f.} Remark \ref{2inB}. Taking degrees, it follows that
$\lambda\zeta$ is a sum of irreducible characters of degree $4$.
This implies that $BC = C$. Therefore, $C$ is a $(B, H)$-Hopf
module under the action of $B$ given by left multiplication and
the coaction of $H$ given by the comultiplication. Then the
Nichols-Zoeller theorem implies that $\dim B$ divides $\dim C =
32$. This contradiction discards this possibility.

Apart from these, other than the types listed in (i)--(xiii), we
must consider the possibilities  $(1, 12; 2, 3; 6, 1)$, $(1, 12;
2, 3; 3, 4)$, with $|G(H)| = 12$, and $(1, 4; 2, 1; 4, 1; 6, 1)$,
$(1, 4; 2, 5; 6, 1)$, $(1, 4; 2, 1; 3, 4; 4, 1)$, $(1, 4; 2, 5; 3,
4)$, with $|G(H)| = 4$. In these cases, $G(H)$ contains a subgroup
of order $4$ and the number of irreducible characters of degree
$2$ is odd. Hence, by \cite[Proposition 2.1.3]{ssld}, $H$ would
contain a Hopf subalgebra of dimension $8$, which is not possible,
since $8$ does not divide $\dim H$. This discards these
possibilities and proves that these are indeed the only possible
coalgebra types.

\medbreak It follows from Lemma \ref{n=2} that the simple
subcoalgebras of dimensions $1$ and $4$ form a Hopf subalgebra of
$H$, isomorphic to $k^{D_3}$, for the coalgebra types $(1, 2; 2,
1; 3, 6)$ and $(1, 2; 2, 1; 3, 2; 6, 1)$, or to $k^{D_5}$ for the
coalgebra type  $(1, 2; 2, 2; 5, 2)$. Finally, the statement on
type $(1, 4; 2, 2; 4, 3)$ is easily seen. \end{proof}

The next two lemmas discard  the existence of certain quotient
Hopf algebras.

\begin{lemma}\label{sub-qt} Suppose $H$ has a quotient Hopf algebra of dimension $12$. Then we have:

\medbreak  \emph{(i)} If $H$ has coalgebra type $(1, 2; 2, 1; 3,
6)$, $(1, 2; 2, 1; 3, 2; 6, 1)$,  $(1, 6; 3, 6)$, or $(1, 6; 3, 2;
6, 1)$, then $H$ is not simple.

\medbreak \emph{(ii)} If $|G(H)|$ is divisible by $5$, then $H$ is
not simple. \end{lemma}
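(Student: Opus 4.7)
My plan is to derive a contradiction by examining the left coideal subalgebra of coinvariants $R = H^{\co \pi}$, where $\pi: H \to \overline H$ is the hypothetical Hopf surjection with $\dim \overline H = 12$, so $\dim R = 5$. Decompose $R$ as a direct sum of irreducible left $H$-comodules. The one-dimensional summands are exactly the spans $kg$ of the group-likes $g \in G_0 := G(H) \cap R = \ker(\pi\vert_{G(H)})$. Hence $|G(H)|/|G_0|$ divides $|G(\overline H)|$ and so divides $12$, $|G_0| \leq 5$, and the higher-dimensional summands have dimensions (which are degrees of irreducible characters of $H$) summing to $5 - |G_0|$.

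For part (ii), since $5$ divides $|G(H)|$ but not $12$, it must divide $|G_0|$; together with $|G_0| \leq 5$ this forces $|G_0| = 5$ and $R = k\Z_5$. Being a group algebra, $R$ is a subcoalgebra, and as $H^{\co \pi}$ it is stable under the left adjoint action of $H$, so $R$ is a normal Hopf subalgebra of dimension $5$, contradicting simplicity.

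For part (i), I would show that no such quotient can exist, so the implication is vacuous. In types (i.c) and (i.d) the only non-trivial character degree fitting inside $R$ is $3$ (degree $6$ is too large), so $5 - |G_0| \in \{0, 3\}$; combined with $|G_0| \mid 6$ only $|G_0| = 2$ survives, yielding $R = k \oplus kh \oplus V$ with $V$ a $3$-dimensional irreducible left coideal of character $\chi$. Then $hV \subseteq R$ is a $3$-dimensional left coideal, and the isotypic decomposition of $R$ forces $hV = V$. Since the character of $hV$ as left $H$-comodule is $h\chi$, we obtain $h\chi = \chi$, so $h \in G[\chi]$; but $|G[\chi]|$ divides $\gcd(9, 6) = 3$, which has no element of order $2$ --- contradiction.

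For types (i.a) and (i.b), $|G(H)| = 2$ so $|G_0| \in \{1, 2\}$. The case $|G_0| = 2$ is ruled out by the same argument (with the even stronger $\gcd(9, 2) = 1$). For $|G_0| = 1$, the only decomposition of $4$ into admissible degrees $\geq 2$ is $2 + 2$, so $R = k \oplus V_1 \oplus V_2$ with both $V_i$ two-dimensional. Since the type has a unique degree-$2$ irreducible character $\psi$, both $V_i$ lie inside the single $4$-dimensional simple subcoalgebra $C_\psi$, and $V_1 \cap V_2 = 0$ forces $V_1 + V_2 = C_\psi$. Thus $R = k \oplus C_\psi$ is simultaneously a left coideal subalgebra and a subcoalgebra, hence a Hopf subalgebra of dimension $5$; being of prime dimension, $R \simeq k\Z_5$, so $|G(R)| = 5$, contradicting $G(R) = G_0$ of order $1$. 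The delicate point throughout is the character-theoretic identification $hV = V \Rightarrow h \in G[\chi]$; everything else is dimension counting together with the divisibility $|G[\chi]| \mid \gcd((\deg \chi)^2, |G(H)|)$.
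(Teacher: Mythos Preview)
Your proof is correct and follows the same overall strategy as the paper's: analyse the decomposition of $R=H^{\co\pi}$ (dimension $5$) into irreducible left coideals and reach a contradiction. Part (ii) is argued identically. For part (i) the details diverge in two places.

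First, the paper disposes of the possibility $|G_0|>1$ in one line: it asserts $kG(H)\cap H^{\co B}=k1$ ``by [NZ]'', i.e.\ it invokes freeness of the coideal subalgebra $H^{\co B}$ over the Hopf subalgebra $kG_0$, so that $|G_0|$ divides $5$ and hence equals $1$. You instead allow $|G_0|=2$ and eliminate it by the stabiliser argument $h\in G[\chi]$ with $|G[\chi]|\mid\gcd(9,|G(H)|)$. Your route avoids citing the freeness result for coideal subalgebras, at the cost of an extra (but entirely elementary) case.

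Second, for the types with $|G(H)|=2$ and $|G_0|=1$, the paper observes that $R=k1\oplus V_1\oplus V_2$ sits inside the $6$-dimensional Hopf subalgebra of type $(1,2;2,1)$ and gets $5\mid 6$ from freeness---contradiction. You instead note that $R=k1\oplus C_\psi$ is a subcoalgebra (since $\psi^*=\psi$, so $R$ is $\mathcal S$-stable) and hence a Hopf subalgebra; then Zhu's theorem forces $R\simeq k\Z_5$, contradicting $|G_0|=1$. Both work; yours trades one freeness citation for Zhu's classification of prime-dimensional Hopf algebras.

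One small point worth making explicit in your write-up: the implication $hV=V\Rightarrow h\chi=\chi$ really goes via characters. The left coideal $hV$ has character $h\chi$; since $hV$ is an irreducible $3$-dimensional subcomodule of $R$ and the only degree-$3$ character occurring in $R$ is $\chi$, one gets $h\chi=\chi$ directly (and then $hV=V$ follows). Your phrasing suggests proving $hV=V$ first, which is slightly roundabout.
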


\begin{proof} Suppose $H \to B$ is a Hopf algebra quotient
with $\dim B = 12$. Then $\dim H^{\co B} = 5$. Consider  first the
case (i). Here $kG(H)\cap H^{\co B} = kG(H)^{\co B} = k1$, by
\cite{NZ}. On the other hand,  $H^{\co B}$ cannot be contained in
a Hopf subalgebra of type $(1, 2; 2, 1)$, because $\dim H^{\co B}$
does not divide $6$.

Decomposing $H^{\co B}$ into a direct sum of simple left coideals
leads to a contradiction, in view of the assumptions on the
coalgebra structure of $H$. This proves (i).

\medbreak Now suppose that $5$ divides  $|G(H)|$, so that $G(H)$
has a subgroup $F$ of order $5$. Then necessarily $kF = H^{\co
B}$, by \cite{NZ}. Thus $kF$ is normal in $H$ and thus $H$ is not
simple. This proves (ii). \end{proof}

\begin{lemma}\label{quotients}\emph{(i)} Suppose $H$ has coalgebra
type $(1, 15; 3, 5)$. Then $H$ has no quotient Hopf algebra of
dimension $6$.

\emph{(ii)} Suppose $H$ has coalgebra type $(1, 4; 2, 2; 4, 3)$.
Then $H$ has no quotient Hopf algebra isomorphic to $kD_5$.
\end{lemma}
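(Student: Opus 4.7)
For part (i), the plan is to assume a surjective Hopf algebra map $\pi \colon H \to B$ with $\dim B = 6$, set $K := H^{\co \pi}$ (of dimension $10$), and decompose $K$ as a direct sum of irreducible left $H$-coideals. Since $H$ has coalgebra type $(1, 15; 3, 5)$, every such coideal has dimension $1$ or $3$, giving $\dim K = a + 3b = 10$, where $a = |G(H) \cap K|$. The restriction $\pi|_{kG(H)}$ induces a group homomorphism $G(H) \to G(B)$ whose image has order dividing $\gcd(15, |G(B)|)$, and hence dividing $3$ since $|G(B)|$ divides $6$. So $a = 15/|\mathrm{image}| \in \{15, 5\}$. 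The case $a = 15$ is impossible because $\dim K = 10$, and the case $a = 5$ forces $3b = 5$, which has no non-negative integer solution.

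For part (ii), I would suppose $\pi \colon H \to kD_5$ is a surjective Hopf algebra map. Since $kD_5$ is cocommutative, $\pi$ is cocentral and $K := H^{\co \pi}$ is a normal Hopf subalgebra of $H$ of dimension $6$. The induced homomorphism $G(H) \to D_5$ has image of order dividing $\gcd(4, 10) = 2$. If this image is trivial, then $kG(H) \subseteq K$ and the Nichols--Zoeller theorem forces $4 \mid 6$, a contradiction. Otherwise $|G(K)| = 2$, and by the classification of semisimple Hopf algebras of dimension $6$ one has $K \simeq k^{S_3}$.

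The key step is then to dualize the exact sequence $k \to K \to H \to kD_5 \to k$ to $k \to k^{D_5} \to H^* \to kS_3 \to k$, which exhibits $kS_3$ as a Hopf algebra quotient of $H^*$ with $(H^*)^{\co kS_3} = k^{D_5}$, a commutative left coideal subalgebra. Lemma \ref{zetazeta*}(ii) applied to the surjection $H^* \to kS_3$ then gives that every simple $H^*$-module has dimension dividing $|S_3| = 6$. But simple $H^*$-modules are the irreducible $H$-comodules, and the coalgebra type $(1, 4; 2, 2; 4, 3)$ of $H$ includes one of dimension $4$. Since $4 \nmid 6$, this is a contradiction.

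The main obstacle in part (ii) is pinning down $K$ via the classification of semisimple Hopf algebras of dimension $6$ sharply enough that the dual quotient $H^* \to K^*$ lands in a group algebra, so that Lemma \ref{zetazeta*}(ii) becomes applicable on the dual side; this is where the coalgebra-type datum of $H$ gets transported into a module-dimension constraint on $H^*$ that can be contradicted. Part (i), by contrast, is a direct numerical count on the decomposition of $H^{\co \pi}$ into irreducible left $H$-coideals.
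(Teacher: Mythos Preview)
Your argument for part (i) is correct and is essentially the paper's approach: both decompose $H^{\co \pi}$ into irreducible left coideals of dimensions $1$ and $3$ and reach a numerical contradiction from the possible sizes of $G(H)\cap H^{\co \pi}$.

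Your argument for part (ii), however, has a fatal gap at the very first step. The assertion ``since $kD_5$ is cocommutative, $\pi$ is cocentral and $K:=H^{\co\pi}$ is a normal Hopf subalgebra'' is unjustified and false in general. Cocommutativity of the target $B$ gives only $\pi(h_{(1)})\otimes\pi(h_{(2)})=\pi(h_{(2)})\otimes\pi(h_{(1)})$ in $B\otimes B$, not the cocentrality condition $\pi(h_{(1)})\otimes h_{(2)}=\pi(h_{(2)})\otimes h_{(1)}$ in $B\otimes H$. Dually, $\pi^*(k^{D_5})\subseteq H^*$ is a \emph{commutative} Hopf subalgebra, but a commutative Hopf subalgebra need not be central (for instance $k\langle(12)\rangle\subseteq k\mathbb S_3$). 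Consequently $H^{\co\pi}$ is only a left coideal subalgebra stable under the adjoint action, not a Hopf subalgebra; the classification of six-dimensional semisimple Hopf algebras does not apply to it, the exact sequence you write down does not exist, and the dualization and appeal to Lemma~\ref{zetazeta*}(ii) collapse entirely.

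The paper's proof of (ii) works precisely because it avoids this trap. It treats $H^{\co\pi}$ only as a left coideal subalgebra, first establishing the fusion relation $\psi\psi^*=\sum_{g\in G(H)}g+\mu$ (with $\mu$ a sum of degree-$4$ irreducibles) for the unique $G(H)$-stable degree-$4$ character $\psi$, and then analyzing the two possible left-coideal decompositions of the six-dimensional space $H^{\co\pi}$. Each is excluded by combining Lemma~\ref{zetazeta*}(i) and Remark~\ref{hcopi} with a detailed calculation of the image $\pi(\zeta)\in kD_5$ of the relevant degree-$4$ character, using the group structure of $D_5$. This character-by-character analysis is exactly what substitutes for the normality you assumed.
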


\begin{proof}(i). Suppose on the contrary that there exists a Hopf algebra quotient $\pi: H \to L$, with $\dim L = 6$.
We have $\dim H^{\co L} = 10$. Therefore $G(H) \cap H^{\co L}$ is
of order $5$ or $1$. Decomposing $H^{\co L}$ into a direct sum of
irreducible left coideals we see that the first is impossible,
whence $|G(H) \cap H^{\co L}| = 1$. But this implies that
$\pi\vert_{G(H)}$ is injective, which contradicts \cite{NZ}, since
$|G(H)|$ does not divide $\dim L$.

\medbreak (ii). Since $H$ has $3$ irreducible characters of degree
$4$, then one of them, say $\psi$, must be a fixed element under
left multiplication by $G(H)$. Hence we have a decomposition
\begin{equation}\psi\psi^* = \sum_{g \in G(H)}g + n\lambda +
n'\lambda' + \mu,\end{equation} where $\lambda \neq \lambda'$ are
the irreducible characters of degree $2$, $\mu$ is a sum of
irreducible characters of degree $4$, and $n, n'$ are nonnegative
integers.

Since $H$ must contain a Hopf subalgebra of dimension $6$, then
left multiplication by $G(H)$  permutes transitively the set $\{
\lambda, \lambda' \}$. Then $n = n'$, because $\psi$ is fixed
under left multiplication by $G(H)$. Suppose $n \neq 0$. Then
\begin{equation}\label{lambdapsi}\lambda \psi = \psi +
\rho,\end{equation} where $\rho$ is an irreducible character of
degree $4$; otherwise, $\rho$ would contain an irreducible
character of degree $\leq 2$, implying, by Lemma \ref{psiinB},
that $\psi$ belongs to the unique Hopf subalgebra of dimension
$12$ of $H$, which is impossible.

Since $n = n'$, we may assume that $\lambda$ belongs to a Hopf
subalgebra of dimension $6$ of $H$; that is, $\lambda^2 = 1 + a +
\lambda$, where $1\neq a$ is a group-like element of order $2$.
Multiplying \eqref{lambdapsi} on the left by $\lambda$, we find
\begin{equation}3\psi + \rho = \psi + \rho + \lambda \rho. \end{equation}
Hence $\lambda \rho = 2 \psi$. Let $C_{\lambda}$, $C_{\psi}$,
$C_{\rho}$, be the simple subcoalgebras containing $\lambda$,
$\psi$ and $\rho$, respectively. Then we have $C_{\lambda}C_{\psi}
\subseteq C_{\psi} \oplus C_{\rho}$ and $C_{\lambda}C_{\rho}
\subseteq C_{\psi}$, implying that $A (C_{\psi} \oplus C_{\rho})
\subseteq C_{\psi} \oplus C_{\rho}$, where $A = k[C_{\lambda}]$ is
a Hopf subalgebra of dimension $6$. Then $C_{\psi} \oplus
C_{\rho}$ is an $(A, H)$-Hopf module. But this contradicts
\cite{NZ}, because $\dim A$ does not divide $32 = \dim (C_{\psi}
\oplus C_{\rho})$. Therefore we have $n = n' = 0$. That is,
\begin{equation}\label{solo1-4}\psi\psi^* = \sum_{g \in G(H^*)}g  +
\mu,\end{equation} where $\mu$ is a sum of irreducible characters
of degree $4$.

\medbreak Suppose on the contrary that there exists a quotient
Hopf algebra $\pi: H \to kD_5$. We have $\dim H^{\co \pi} = 6$.
Then, either $H^{\co \pi}$ contains a unique irreducible left
coideal of dimension $4$, or $H^{\co \pi} = k1 \oplus ka \oplus U
\oplus U'$, where $U$ and $U'$ are irreducible coideals of
dimension $2$ and $1\neq a \in G(H)$.

In view of \eqref{solo1-4}, the last possibility implies that
$m(1, \pi(\psi\psi^*)) = 2$, which contradicts Lemma \ref{hcopi}
(i). Therefore we may assume that
\begin{equation}\label{h*copi}H^{\co \pi} = k1 \oplus ka \oplus V,\end{equation}
as a left coideal of $H$, where $a\in G(H)$ is of order $2$ and
$V$ is an irreducible left coideal of dimension $4$. Let $\zeta
\in H$ be the irreducible character corresponding to $V$. We have
$\zeta = \zeta^*$.

\medbreak Consider the decomposition \eqref{solo1-4}. Using Lemma
\ref{zetazeta*} (i) and the decomposition \eqref{h*copi} of
$H^{\co \pi}$, we get $m(1, \pi(\mu)) \geq 2$. Hence $m(\zeta,
\psi\psi^*) = m(\zeta, \mu) \geq 2$. This implies that $|G[\zeta]|
= 4$; otherwise, since $\psi$ is stable under left multiplication
by $G(H)$, we would  also have $m(g\zeta, \psi\psi^*) \geq 2$, for
some $g \in G(H)$ such that $g\zeta \neq \zeta$, whence the
contradiction $\deg \psi \psi^* \geq 20$.

In particular, relation \eqref{solo1-4} holds for $\zeta$ in the
place of $\psi$, and thus
\begin{equation}\label{zeta^2}\zeta^2 = \sum_{g \in G(H)}g +
m\zeta + \zeta',\end{equation} where $m \geq 2$, and $\zeta'$ is
irreducible of degree $4$.

\medbreak Write $\pi(\zeta) = 1 + x + y + z$, where $x, y, z \in
D_5\backslash \{ 1\}$. Since the dimension of an induced
representation from $k^{D_5}$ to $H^*$ is $6 = |H^*: k^{D_5}|$, we
see that the multiplicity of $\zeta$ in such representation is at
most $1$. By Frobenius reciprocity,  $x$, $y$ and $z$ are pairwise
distinct. Since $\pi(G(H))$ is a subgroup of $D_5$, then
$|\pi(G(H))| = 2$. Thus $\pi(g)^2 = 1$, for all $g \in G(H)$.
Suppose $g \in G(H)$ is such that $\pi(g)\neq 1$. The relation
$g\zeta = \zeta$ implies that $m(\pi(g), \pi(\zeta))
> 0$. Hence we may assume $\pi(g) = x$.

Applying $\pi$ to the relation \eqref{zeta^2}, we get
\begin{equation}(1+ x + y + z)^2 = (2+m)(1 + x) + my + mz + \pi(\zeta'). \end{equation} Comparing the multiplicity of  $x$ on both
sides of this equality, and since $m \geq 2$, we see that $x = yz
= zy$.

On the other hand, by self-duality of $\pi(\zeta)$ and because
$x^2 = 1$, we must have $y = z^{-1}$ or $y^2 = z^2 = 1$. This is
is a contradiction since, in any case, neither $zy$ nor $yz$ can
be of order $2$. This shows that the decomposition \eqref{h*copi}
is impossible. Then $H$ cannot have quotient Hopf algebras
isomorphic to $kD_5$, as claimed.
\end{proof}

It was shown in \cite[Theorem 6.4]{BN} that for every semisimple
Hopf algebra $H$ such that all irreducible characters have degree
at most $2$, either $H$ or $H^*$ must contain a central group-like
element. As a consequence, we have:

\begin{proposition}\label{deg2} Suppose $H$ has coalgebra type $(1, 4; 2, 14)$, $(1, 12; 2, 12)$ or  $(1, 20; 2, 10)$.
Then $H$ is not simple. \qed \end{proposition}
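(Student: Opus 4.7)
The plan is a one-shot application of \cite[Theorem 6.4]{BN}. The first observation is that in each of the three prescribed coalgebra types $(1,4;2,14)$, $(1,12;2,12)$ and $(1,20;2,10)$, inspection of the type shows that every irreducible character of $H$ has degree at most $2$. Hence the hypothesis of \cite[Theorem 6.4]{BN} is satisfied, and that theorem supplies a nontrivial central group-like element either in $H$ or in $H^{*}$.

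From this I would conclude non-simplicity as follows. If there exists a nontrivial central group-like element $g \in G(H)$, then $k\langle g\rangle$ is a proper Hopf subalgebra of $H$; it is proper because $g \ne 1$ forces $|\langle g\rangle| \ge 2$ and $|\langle g\rangle|$ divides $60$, so $k\langle g\rangle$ has dimension strictly between $1$ and $60$. Since $g$ is central, the adjoint action of $H$ on $k\langle g\rangle$ is given by $h_{(1)}\,g\,\mathcal S(h_{(2)}) = g\,h_{(1)}\mathcal S(h_{(2)}) = \epsilon(h)\,g$, so $k\langle g\rangle$ is stable under the adjoint action, i.e.\ normal. Therefore $H$ is not simple. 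In the alternative case, the same argument applied to $H^{*}$ produces a proper normal Hopf subalgebra of $H^{*}$, so $H^{*}$ is not simple; since the notion of simplicity is self-dual, as recalled in the introduction, $H$ itself is not simple.

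There is essentially no obstacle to overcome at this stage: all of the technical content has been packaged into \cite[Theorem 6.4]{BN}, which the author has already established. The only verification required in the present argument is the immediate observation that the three coalgebra types listed in the proposition are exactly those in Proposition \ref{conteo} (with $G(H) \neq 1$) for which no irreducible character of degree $\ge 3$ appears, so that \cite[Theorem 6.4]{BN} applies verbatim.
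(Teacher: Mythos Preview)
Your proof is correct and follows exactly the same route as the paper: the proposition is stated immediately after the sentence recalling \cite[Theorem 6.4]{BN} and is marked with a \qed, so the intended argument is precisely the one you give. You have simply written out the (standard) details of why a nontrivial central group-like element yields a proper normal Hopf subalgebra and invoked self-duality of simplicity, which the paper leaves implicit.
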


\section{Proof of the main result}\label{pf}
In the following subsections we shall consider the distinct
possibilities for the coalgebra type of $H$, arising from
Proposition \ref{conteo}.

The results in this section, combined with Proposition
\ref{conteo}, imply the statement in Theorem \ref{main}, namely,
that the only simple semisimple Hopf algebras of dimension $60$
are exactly $\A_0$, $\A_1$ and $\B$.

\subsection{Type (vi)}\label{five} We know from Proposition \ref{deg2} that if $H$ has coalgebra type $(1, 4; 2,
14)$, then $H$ is not simple. We shall show in this subsection
that the same occurs for the type $(1, 4; 2, 10; 4, 1)$.

\begin{proposition}\label{b} Suppose $H$ is of type $(1, 4; 2, 10; 4, 1)$ as a
coalgebra. Then $H$ is not simple. \end{proposition}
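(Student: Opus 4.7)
The strategy hinges on extracting structural information from the unique degree-$4$ irreducible character $\psi$ of $H$. Since $\psi^*$ is also irreducible of degree $4$, by uniqueness $\psi^* = \psi$, and similarly, for every $g \in G(H)$ both $g\psi$ and $\psi g$ are irreducible of degree $4$, so $g\psi = \psi = \psi g$. Thus $G[\psi] = G(H)$ has order $4$ and $\psi\psi^* = \sum_{g \in G(H)}g + R$, where $R$ is a sum of characters of degree at least $2$ of total degree $12$. As a preliminary, every degree-$2$ irreducible character $\mu$ must satisfy $|G[\mu]| \in \{2, 4\}$: the decomposition $\mu\mu^* = \sum_{g \in G[\mu]}g + R_\mu$ with $R_\mu$ a sum of characters of degree $\geq 2$ forces $4 - |G[\mu]|$ to be a nonnegative combination of $2$'s and $4$'s (no degree-$3$ characters exist in $H$), ruling out $|G[\mu]| = 1$.

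Next I observe that the sum $L$ of simple subcoalgebras of dimension at most $4$ has dimension $4 + 10 \cdot 4 = 44$, which does not divide $60$. Hence $L$ is not a Hopf subalgebra, so it fails to be closed under multiplication, forcing the existence of degree-$2$ irreducible characters $\mu, \mu'$ with $\mu\mu'$ containing $\psi$; degree counting then gives $\mu\mu' = \psi$. I claim $|G[\mu]| = |G[\mu']| = 2$. Assuming for contradiction $|G[\mu]| = 4$, using commutativity of the character algebra we have $|G[\mu^*]| = 4$ as well and hence $\mu g = \mu$ for every $g \in G(H)$. Then
\[
\psi\psi^* = (\mu\mu')(\mu\mu')^* = \mu(\mu'\mu'^*)\mu^* = (\mu'\mu'^*)\Bigl(\sum_{g \in G(H)}g\Bigr),
\]
and in each subcase $|G[\mu']| \in \{2, 4\}$ the right-hand side produces a group-like component of multiplicity at least two on some $g$, contradicting the exact form $\psi\psi^* = \sum_g g + R$ (with $R$ devoid of group-likes). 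By symmetry $|G[\mu']| = 2$ as well.

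Since $|G[\mu]| = 2$, Theorem \ref{bn-mres} gives $B[\mu] \simeq k^{D_n}$ for some $n \geq 3$ with $2n \mid 60$, so $n \in \{3, 5, 6, 10, 15\}$ and $\dim B[\mu] \in \{6, 10, 12, 20, 30\}$. The cleanest case is $n = 15$: then $B[\mu]$ has index $2$ in $H$ and is automatically normal, so $H$ is not simple. For the remaining values of $n$, I consider the Hopf subalgebra $\tilde B$ generated jointly by $\mu$ and $\mu'$; it contains $\mu\mu' = \psi$, $G(H)$ (via orbit closures inside $\mu\mu^*$), and the full $G(H)$-orbits of $\mu$ and $\mu'$, giving $\dim \tilde B \geq 28$, and so by divisibility $\dim \tilde B \in \{30, 60\}$. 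The subcase $\dim \tilde B = 30$ again yields non-simplicity via index-$2$ normality. In the remaining subcase $\dim \tilde B = 60$, I reduce to analyzing the Hopf subalgebra $K$ generated by $C_\psi$ alone; a similar count $\dim K = 20 + 4t$ with $\dim K \mid 60$ forces $\dim K \in \{20, 60\}$, and the case $\dim K = 20$ produces $\psi^2 = \sum_g g + 3\psi$ and $K = kG(H) \oplus C_\psi$ a proper Hopf subalgebra of coalgebra type $(1, 4; 4, 1)$.

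The main obstacle is the final subcase, in which $\tilde B = H$ and one needs to establish non-simplicity from the Hopf subalgebra $K$ of dimension $20$; this requires either a direct verification of normality of $K$ via computing the adjoint action of degree-$2$ characters outside $K$ on the basis elements of $K$, or a dualization argument using that any semisimple Hopf algebra of dimension $20$ and coalgebra type $(1,4;4,1)$ must be isomorphic to $k^{F_{20}}$ (so the surjection $H^* \twoheadrightarrow K^* \simeq kF_{20}$ composed with the normal quotient $kF_{20} \to k\mathbb Z_4$ coming from $\mathbb Z_5 \lhd F_{20}$ yields the desired proper Hopf quotient of $H^*$, contradicting simplicity of $H$ by self-duality).
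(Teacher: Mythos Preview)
Your approach diverges substantially from the paper's, which dispatches the result in three lines: after producing degree-$2$ irreducibles $\chi, \chi'$ with $\chi\chi'$ irreducible of degree $4$ (as you do), it invokes \cite[Theorem 2.4.2]{ssld} to obtain $G[\chi] \cap G[\chi'] = 1$, hence $G[\chi], G[\chi']$ are distinct subgroups of order $2$, and then \cite[Lemma 6.11]{BN} forces $H \simeq \B$ if $H$ is simple, contradicting the coalgebra type. You attempt a direct argument instead, but several steps fail.

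First, the character algebra of a semisimple Hopf algebra is \emph{not} commutative in general (for $H = kG$ with $G$ nonabelian the irreducible comodule characters are the group-likes $g \in G$), so your deduction of $|G[\mu^*]| = 4$ from $|G[\mu]| = 4$ and the rewriting $\mu(\mu'\mu'^*)\mu^* = (\mu'\mu'^*)\bigl(\sum_g g\bigr)$ are unjustified. The conclusion $|G[\mu]| = |G[\mu']| = 2$ is correct, but it needs the cited result from \cite{ssld} (or an honest replacement), not commutativity.

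Second, you never treat the case $\dim K = 60$, i.e.\ $k[C_\psi] = H$. You list it as one of the two possibilities and then silently drop it; nothing you wrote excludes it. Third, even when $\dim K = 20$, your dualization argument does not conclude. The composite $H^* \twoheadrightarrow K^* \simeq kF_{20} \to k\Z_4$ is nothing but the transpose of the inclusion $kG(H) \hookrightarrow K \hookrightarrow H$: indeed $G(k^{F_{20}}) = \widehat{F_{20}} \simeq \Z_4$ is exactly $G(K) = G(H)$, so the image of $k^{\Z_4}$ in $H$ is $kG(H)$. The mere existence of a Hopf algebra quotient of $H^*$ (equivalently, a Hopf subalgebra of $H$) does not contradict simplicity; you would need the quotient to be \emph{normal}, i.e.\ $(H^*)^{\co k\Z_4}$ to be a Hopf subalgebra, and this you have not shown. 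The alternative ``direct verification of normality of $K$'' is likewise left entirely unexecuted.
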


\begin{proof} There must exist irreducible characters
$\chi$ and $\chi'$ of degree $2$, such that $\chi\chi'$ is
irreducible of degree $4$. Otherwise, the sum of simple
subcoalgebras of dimensions $1$ and $2$ would be a Hopf subalgebra
of $H$ of dimension $44$, which is impossible. By \cite[Theorem
2.4.2]{ssld}, we have $G[\chi] \cap G[\chi'] = 1$, thus $G[\chi]$,
$G[\chi']$ are distinct subgroups  of order $2$. If $H$ is simple,
then by \cite[Lemma 6.11]{BN}, it should be $H \simeq \B$, which
contradicts the assumption on the coalgebra type of $H$. Hence $H$
is not simple, as claimed. \end{proof}

\subsection{Type (iv)} We show in the next two propositions that there is no
simple Hopf algebra in this type.

\begin{proposition}\label{n=3} Suppose $H$ is of type $(1, 3; 2, 12; 3, 1)$ as a coalgebra.
Then $H_{\coad}$ is a commutative Hopf subalgebra of dimension
$12$. In particular, $H$ is not simple. \end{proposition}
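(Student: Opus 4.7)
The plan is to show that $H_{\coad}$ coincides with a single commutative Hopf subalgebra $B \simeq k^{\mathbb A_4}$ of dimension $12$, from which the conclusion follows because $H_{\coad}$ is automatically normal via the sequence \eqref{univcocentral}.

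First I would extract the elementary group-like information. Since $|G(H)| = 3$ and $\psi$ is the unique irreducible character of degree $3$, every $g \psi$ is also irreducible of degree $3$, so $g\psi = \psi$; in particular $G[\psi] = G(H)$ and $\psi = \psi^*$. For any irreducible $\chi$ of degree $2$, $|G[\chi]|$ divides both $|G(H)| = 3$ and $(\deg \chi)^2 = 4$, hence $G[\chi] = 1$. Then Theorem \ref{bn-mres}(iii) leaves only three possibilities: $B[\chi] \simeq k^{\mathbb A_4}$, $k^{\mathbb S_4}$, or $k^{\mathbb A_5}$. Since $24$ does not divide $60$ and since $k^{\mathbb A_5}$ has coalgebra type $(1,1;3,2;4,1;5,1)$ incompatible with that of $H$, one must have $B[\chi] \simeq k^{\mathbb A_4}$ of dimension $12$. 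The three group-like characters and the unique degree-$3$ character of $B[\chi]$ must, inside $H$, coincide with the elements of $G(H)$ and with $\psi$. Hence as a subcoalgebra $B[\chi] = kG(H) \oplus C_\psi$ independently of the choice of $\chi$; call this common Hopf subalgebra $B$.

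The substantive step is then to establish the reverse inclusion $H_{\coad} \subseteq B$. Since $H_{\coad}$ is generated by the irreducible components of $X \mathcal S(X)$ with $X$ ranging over simple subcoalgebras of $H$, and since group-like $X$ contribute only the trivial character while the dimension-$4$ coalgebras contribute $\chi\chi^* \in B[\chi] = B$, the only remaining case is $\psi\psi^* = \psi^2$. For any irreducible summand $\gamma$ of $\psi^2$, applying \eqref{adj} together with $\psi^* = \psi$ yields $m(\psi, \gamma\psi) > 0$; Lemma \ref{psiinB}, with the two copies of $\psi$ both lying in $B$, then forces $\gamma \in B$. Therefore $\psi^2 \in B$, and $H_{\coad} = B$ is a commutative normal Hopf subalgebra of dimension $12 < 60$, so $H$ is not simple.

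The only delicate point is the application of Theorem \ref{bn-mres}: one must rule out the $\mathbb A_5$ case using the coalgebra type of $H$, and then observe that the invariants of $k^{\mathbb A_4}$ force $B[\chi]$ to be the same Hopf subalgebra of $H$ for every degree-$2$ character $\chi$. After that, the $\psi^2$ computation is immediate from Lemma \ref{psiinB}.
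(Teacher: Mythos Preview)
Your proof is correct and follows essentially the same line as the paper's: both identify a commutative Hopf subalgebra of dimension $12$ containing $C_\psi$ and show it equals $H_{\coad}$. The paper argues a bit more directly, observing $\chi\chi^* = 1 + \psi$ (from $G[\chi]=1$ and a degree count) so that $B[\chi] = k[C_\psi]$ is commutative of dimension $12$ by \cite{NR}, \cite[Remark 3.4]{BN}, without passing through the trichotomy of Theorem~\ref{bn-mres}; and for $\psi\psi^*$ you can simply note that $\psi \in B$ and $B$ is a Hopf subalgebra, so the irreducible constituents of $\psi^2$ lie in $B$ automatically---your appeal to Lemma~\ref{psiinB} is correct but unnecessary.
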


\begin{proof} Since $|G(H)|$ is odd, then for all irreducible character $\chi$ of degree
$2$ we have $\chi \chi^* = 1 + \lambda$, where $\lambda$ is
irreducible of degree $3$. It follows that the irreducible
subcoalgebra of dimension $9$ generates a commutative Hopf
subalgebra $A$ of dimension $12$, such that $\chi\chi^* \in A$,
for all irreducible characters $\chi \in H$. See  \cite{NR},
\cite[Remark 3.4]{BN}. Then we have $H_{\coad} = A$. The
proposition follows since $H_{\coad}$ is a normal Hopf subalgebra
of $H$. \end{proof}

\begin{proposition} Suppose $H$ is of type $(1, 3; 3, 1; 4, 3)$ as a
coalgebra.  Then $H$ is not simple. \end{proposition}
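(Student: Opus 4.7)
I would model the proof on Proposition~\ref{n=3}: extract the commutative Hopf subalgebra generated by the unique degree-$3$ character, and then pass to the dual to produce a non-trivial normal Hopf subalgebra.

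Let $\lambda \in H$ denote the unique irreducible character of degree $3$ and $\psi_1,\psi_2,\psi_3$ those of degree $4$. Since $G(H) \simeq \Z_3$ permutes the irreducibles of each fixed degree by left multiplication and $\lambda$ is unique, $G[\lambda] = G(H)$ and $\lambda^* = \lambda$. A degree count in $\lambda^2$, using the absence of degree-$2$ characters and the bound $m(\lambda,\lambda^2) \leq \lfloor 9/3 \rfloor = 3$, yields
\[
\lambda^2 \;=\; 1 + g + g^2 + 2\lambda.
\]
Hence $A := B[\lambda]$ is a commutative Hopf subalgebra of $H$ of dimension $12$ with coalgebra type $(1,3;3,1)$, so $A \simeq k^{\mathbb{A}_4}$ by the classification of semisimple Hopf algebras of dimension $12$. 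For $\psi_i$ one has $|G[\psi_i]| = 1$ since $\gcd(3,16)=1$; writing $\psi_i\psi_i^* = 1 + a\lambda + \sum_j b_j^{(i)}\psi_j$ and using the degree equation $3a + 4\sum_j b_j^{(i)} = 15$ together with $m(\lambda,\psi_i\psi_i^*) = m(\psi_i,\lambda\psi_i) \leq 3$ forces $a = 1$ and $\sum_j b_j^{(i)} = 3$. By Lemma~\ref{psiinB} applied to $A$, both $\lambda\psi_i$ and $\psi_i\lambda$ lie in $\operatorname{span}\{\psi_1,\psi_2,\psi_3\}$.

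Dualizing, the inclusion $A \hookrightarrow H$ gives a Hopf algebra surjection $\pi : H^* \twoheadrightarrow A^* \simeq k\mathbb{A}_4$, and the coinvariants $R := (H^*)^{\co\pi}$ form a $5$-dimensional left coideal subalgebra of $H^*$, stable under the left adjoint action. The goal is to show that $R$ is in fact a subcoalgebra of $H^*$; since $k\Z_5$ is the only Hopf algebra of dimension $5$, this forces $R \simeq k\Z_5$, exhibiting it as a proper non-trivial normal Hopf subalgebra of $H^*$ and proving that $H^*$, and hence $H$, is not simple. By Remark~\ref{hcopi} one has $R \simeq \Ind_A^H k$ as left $H$-comodules, and combined with Lemma~\ref{zetazeta*}(ii) and the explicit fusion rules above, this information is to be used to rule out the appearance of any simple left coideal summand of dimension $>1$ in $R$, forcing $R$ to be spanned by $5$ grouplike elements of $H^*$ that form a subgroup of order $5$.

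The principal obstacle is this last step: controlling the simple coideal decomposition of $R$. The intended approach is by contradiction---supposing a simple left coideal of dimension $\geq 2$ appears in $R$ and using the fusion rules derived above to force an impossible multiplicity count, in the spirit of the arguments in Lemmas~\ref{sub-qt} and~\ref{quotients}.
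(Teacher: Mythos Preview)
Your opening is correct and matches the paper's: the subcoalgebras of dimensions $1$ and $9$ generate a Hopf subalgebra $A \simeq k^{\mathbb A_4}$, and dualizing gives a surjection $\pi:H^*\to k\mathbb A_4$ with $\dim R = 5$ for $R=(H^*)^{\co\pi}$. The gap is in the final step. You want to show that $R$ is spanned by five group-like elements of $H^*$; but this forces $|G(H^*)|\geq 5$, and nothing you have established rules out $|G(H^*)|=2$. In fact the paper's proof shows that, assuming $H$ simple, after eliminating the other coalgebra types for $H^*$ one is left precisely with the possibility that $H^*$ is of type $(1,2;2,2;5,2)$. In that situation $R$ cannot possibly be $k\Z_5$, and your proposed contradiction never materializes.

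There is also a structural mismatch in your plan for the last step: the ``fusion rules above'' are statements about products of characters in $H$, i.e.\ about the \emph{coalgebra} structure of $H$; the decomposition of $R$ into irreducible left coideals of $H^*$ is governed by the \emph{algebra} structure of $H$ (equivalently, the coalgebra structure of $H^*$). Knowing $\lambda^2$ and $\psi_i\psi_i^*$ in $R(H^*)$ does not by itself constrain the simple $H$-modules appearing in $\Ind_A^H k$. The paper proceeds instead by a case analysis on the coalgebra type of $H^*$: when $G(H^*)\supseteq\mathbb A_4$ it invokes Proposition~\ref{braided-a4} on the biproduct $H^*\simeq R\#k\mathbb A_4$ (note: even here it is $R\#k\mathbb K$, not $R$ itself, that is shown to be normal); and in the residual case $H^*$ of type $(1,2;2,2;5,2)$ it uses the quotient $H\to kD_5$ together with the decomposition of $H^{\co kD_5}$ and Lemma~\ref{zetazeta*}(i) to reach a contradiction. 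Your argument needs an analogue of this second branch.
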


\begin{proof} It is easily seen that the simple subcoalgebras of dimensions
$1$ and $9$ form a Hopf subalgebra $K$ isomorphic to $k^{\mathbb
A_4}$. Suppose that $H$ is simple. By Lemma \ref{sub-qt}, $H^*$
cannot be of any of the types (ii), (viii), (ix), or (xii).

If $\pi: H \to B$ is a Hopf algebra quotient such that $\dim B =
12$, then $\dim H^{\co B} = 5$ and thus $K\cap H^{\co B} = k1$.
Then $\pi$ restricts to an isomorphism $K \to B$. In particular,
$B^* \simeq k\mathbb A_4 \subseteq kG(H^*)$, and $H^*$ is a
biproduct $H^* \simeq R \# k\mathbb A_4$. Hence, by Proposition
\ref{braided-a4}, $H$ is not simple.

By Propositions \ref{conteo}, \ref{deg2}, \ref{b} and \ref{n=3},
$H^*$ must be of type $(1, 2; 2, 2; 5, 2)$ as a coalgebra. In
particular, there is a Hopf algebra quotient $\pi: H \to B$, with
$B\simeq kD_5$.

\medbreak Let $R = H^{\co B}$. We have $\dim R = 6$ and
necessarily $G(H) \subseteq R$. In view of the coalgebra type of
$H$, this implies that, as a left coideal of $H$,
\begin{equation}\label{r}R = kG(H) \oplus V, \end{equation} where $V$ is
an irreducible left coideal of dimension $3$.

\medbreak Write $G(H) = \{ 1, a, a^2 \}$, and let $\psi \in K$ be
the irreducible character of degree $3$. Let also $\zeta \in H$ be
an irreducible character of degree $4$.

We have a decomposition $\zeta\zeta^* = 1 + n\psi + m_1\zeta +
m_2a\zeta + m_3a^2\zeta$. Taking degrees, this implies $n \neq 0$.
This gives in turn $\psi\zeta = \zeta + a \zeta + a^2\zeta$, since
$\psi$, and thus also $\psi\zeta$, are stable under left
multiplication by $G(H)$. Hence $n = 1$.

The decomposition \eqref{r} implies that $m(1, \pi(a^i\zeta)) =
0$, for all $i = 0, 1, 2$, in view of Remark \ref{hcopi}. On the
other hand, $m(1, \pi(\psi)) = 1$. Thus we get $m(1,
\pi(\zeta\zeta^*)) = 2 < \deg \zeta = 4$. This contradicts Lemma
\ref{zetazeta*} (i). The contradiction comes from the assumption
that $H$ is not simple, hence the proposition follows. \end{proof}

\subsection{Type (viii)} By Lemma \ref{sub-qt} we may assume that $H$ has no quotient Hopf
algebra of dimension $12$. In view of Lemma \ref{quotients} (i),
$H^*$ is not of type $(1, 15; 3, 5)$.  Therefore, Proposition
\ref{conteo} and the previous results imply that there is a
quotient Hopf algebra $\pi: H \to \overline H$, where $\dim
\overline H = 10$ or $6$.

\begin{proposition} Suppose $H$ has coalgebra type $(1, 6; 2, 9; 3, 2)$.
Then $H$ is not simple. \end{proposition}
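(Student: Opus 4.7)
The plan is to assume for contradiction that $H$ is simple and to exploit the quotient Hopf algebra map $\pi\colon H \to \overline H$ with $\dim \overline H \in \{6, 10\}$ that is already available from the preamble. Two preliminary facts drive the argument: first, since $|G[\psi]|$ divides both $|G(H)| = 6$ and $(\deg \psi)^2 = 9$, and since the $G(H)$-orbit of the degree-$3$ character $\psi$ has at most $2$ elements, one must have $|G[\psi]| = 3$, and $F := G[\psi]$ coincides with the unique subgroup of order $3$ in $G(H)$ (whether $G(H) \simeq \Z_6$ or $S_3$); second, since $9$ is not a multiple of $6$, at least one degree-$2$ character $\chi$ has $G[\chi] = \{1, a\}$ of order $2$, and then a dimension count in $\chi\chi^*$ (after subtracting $1 + a$) forces $\chi\chi^* = 1 + a + \chi'$ for some irreducible degree-$2$ character $\chi'$.

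For the case $\dim \overline H = 10$, let $R = H^{\co \pi}$, so $\dim R = 6$. Since $|G(\overline H)|$ divides $10$, the bound $|\pi(G(H))| \leq \gcd(6, |G(\overline H)|) \leq 2$ gives $|G(H) \cap R| \in \{3, 6\}$; the value $6$ would make $R = kG(H)$ a normal Hopf subalgebra, contradicting simplicity, so $|G(H) \cap R| = 3 = F$. Excluding additional one-dimensional summands (which would add extra group-likes), the decomposition of $R$ into irreducible left coideals must be $R = kF \oplus V$ with $V$ three-dimensional. By Remark~\ref{hcopi} this gives $m(1, \pi(\lambda)) = 0$ for every irreducible degree-$2$ character $\lambda$ of $H$. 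Combined with the partner relation $\chi\chi^* = 1 + a + \chi'$, one computes $m(1, \pi(\chi\chi^*)) = 1 + m(1, \pi(a)) + 0 = 1$, since $a$ has order $2$ and so does not lie in $F$; this contradicts the lower bound $m(1, \pi(\chi\chi^*)) \geq 2$ from Lemma~\ref{zetazeta*}(i), closing the case.

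For the case $\dim \overline H = 6$ (so $\dim R = 10$), I would stratify by $|F'| := |G(H) \cap R| \in \{1, 2, 3, 6\}$, using that every $g \in F'$ preserves $R$ by left multiplication and that the character of $gW$ equals $g\chi_W$ for any irreducible left coideal summand $W$ of $R$. The subcase $|F'| = 6$ forces $R = kG(H) \oplus V_1 \oplus V_2$ with two-dimensional $V_i$, and the $G(H)$-action on $\{V_1, V_2\}$ is either trivial (giving $|G[\chi_{V_i}]| \geq 6$, contradicting $|G[\chi_{V_i}]| \mid 4$) or has image of order $2$ (giving a $G(H)$-orbit of degree-$2$ characters of size $2$, contradicting the allowed orbit sizes $\{3, 6\}$); the subcase $|F'| = 3$ forces $R = kF \oplus V_1 \oplus V_2 \oplus V_3$ with $V_1, V_2$ two-dimensional, and since any homomorphism $\Z_3 \to S_2$ is trivial, $F \subseteq G[\chi_{V_i}]$, again violating $|G[\chi_{V_i}]| \mid 4$. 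The main obstacle is the remaining subcase $|F'| \in \{1, 2\}$, where the admissible coideal shapes for $R$ are more varied ($2+2+2+2$ or $2+3+3$ for $|F'| = 2$, and $3+3+3$ or $2+2+2+3$ for $|F'| = 1$); here my plan is to combine Lemma~\ref{zetazeta*}(i) applied to $\psi$ (yielding $m(1, \pi(\psi\psi^*)) \geq 3$) with the limited number of coideal summands of each degree in $R$ and the partner relations across $G(H)$-orbits of degree-$2$ characters, or alternatively to realize $H$ as a biproduct by lifting a group-algebra factor of $\overline H$ to a Hopf subalgebra of $H$ and then apply Proposition~\ref{braided-dih} or~\ref{braided-a4} to produce a proper normal Hopf subalgebra of $H$, contradicting simplicity.
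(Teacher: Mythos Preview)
Your argument for the case $\dim\overline H = 10$ is essentially sound (modulo verifying that Lemma~\ref{zetazeta*}(i) applies, i.e.\ that $\overline H$ is actually a group algebra; this is not automatic from the preamble and would need a short justification). The genuine gap is the case $\dim\overline H = 6$ with $|F'|\in\{1,2\}$, which you flag as ``the main obstacle'' but do not resolve. Your proposed fallbacks do not apply: Proposition~\ref{braided-dih} requires $\dim R = n+1$ over $D_n$ and Proposition~\ref{braided-a4} requires $\dim R = 5$ over $\mathbb A_4$, whereas here a biproduct decomposition (if it exists at all, which needs $\overline H$ cocommutative) would give $\dim R = 10$ over a group of order $6$. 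The Lemma~\ref{zetazeta*}(i) strategy applied to $\psi$ is also not obviously decisive once several degree-$3$ coideals can sit inside $R$. So as written the proof is incomplete.

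The paper's argument is completely different and much shorter, and it bypasses the quotient $\pi$ entirely. The key observation you missed is on the degree-$2$ side: it is impossible for \emph{every} degree-$2$ character to have $G[\chi]\neq 1$, since then (by \cite[Theorem~2.4.2]{ssld}) the span of the simple subcoalgebras of dimensions $1$ and $4$ would be a Hopf subalgebra of dimension $6+9\cdot 4=42$, which does not divide $60$. Hence there exists $\chi$ of degree $2$ with $G[\chi]=1$, and Theorem~\ref{bn-mres} forces $B[\chi]\simeq k^{\mathbb A_4}$ (the other possibilities $k^{\mathbb S_4}$, $k^{\mathbb A_5}$ being too large). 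Now $G(B[\chi])\simeq\Z_3$ must equal the unique subgroup $F\subseteq G(H)$ of order $3$; since $kF$ is normal in $kG(H)$ and central in the commutative $B[\chi]$, it is normal in $K=k[G(H),B[\chi]]$. Finally $K\supsetneq B[\chi]$ (as $|G(H)|=6>3=|G(B[\chi])|$), and $\dim K$ is a multiple of $12$ dividing $60$, so $K=H$. Thus $kF$ is normal in $H$. Your orbit-count showing that some $\chi$ has $|G[\chi]|=2$ is correct but points in the wrong direction; the decisive fact is that some $\chi$ has $|G[\chi]|=1$.
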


\begin{proof} If $G[\chi] \neq 1$, for all irreducible characters
$\chi$ of degree $2$, then the sum of simple subcoalgebras of
dimensions $1$ and $2$ is a Hopf subalgebra of $H$ of dimension
$42$, which is a contradiction. Then $G[\chi] = 1$ for some of
these characters. Then $B[\chi] \simeq k^{\mathbb A_4}$ of
coalgebra type $(1, 3; 3, 1)$.

On the other hand, $G(H)$ contains a unique normal subgroup $F
\simeq \Z_3$. We have $F \subseteq B[\chi] \simeq k^{\mathbb
A_4}$, with $\chi$ as before. Consider the Hopf subalgebra $K =
k[G(H), B[\chi]]$. Then $kF$ is a normal Hopf subalgebra of $K$.

Since $\dim K > 24$ and $\dim K$ is divisible by $12$, then $K =
H$ and $kF$ is normal in $H$. Therefore $H$ is not simple, as
claimed. \end{proof}

\medbreak It remains to consider the types $(1, 6; 3, 6)$ and $(1,
6; 3, 2; 6, 1)$. Let $F \subseteq G(H)$ be the unique subgroup of
order $3$. Then $F$ is the common stabilizer of all irreducible
characters of degree $3$.

\begin{proposition} Suppose $H$ is of type $(1, 6; 3, 2; 6, 1)$. Then
$H$ is not simple. \end{proposition}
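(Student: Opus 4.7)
The plan is to exploit the quotient Hopf algebra $\pi\colon H\to\overline{H}$ with $\dim\overline{H}\in\{6,10\}$ established in the paragraph preceding the proposition, together with the fact that the irreducible left coideals of $H$ have dimensions $1$, $3$ or $6$, to produce a proper normal Hopf subalgebra.

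First I would handle the case $\dim\overline{H}=10$. Set $R:=H^{\co\pi}$, so $\dim R=6$. Since $\overline{H}$ is one of $k\Z_{10}$, $kD_5$, or $k^{D_5}$, the group $G(\overline{H})$ has no element of order $3$, hence $F\subseteq R$ and $|G(H)\cap R|\geq 3$. The only coideal decompositions of $R$ compatible with this constraint give $|G(H)\cap R|\in\{3,6\}$; the case $|G(H)\cap R|=6$ yields $R=kG(H)$, which is then a normal Hopf subalgebra of $H$. In the remaining subcase $R=kF\oplus V_\chi$ (after relabelling $\chi\leftrightarrow\chi'$ if necessary). I would show that the subspace $K:=kG(H)+C_\chi+C_{\chi'}$, which has dimension $24$ because $G(H)\setminus F$ swaps $\chi$ and $\chi'$ (as $G[\chi]=F$ has index $2$), is actually a sub-Hopf-algebra of $H$; this contradicts the Nichols--Zoeller theorem, since $24\nmid 60$.

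The crux is ruling out $\psi$ as a summand in any of $\chi^2$, $\chi'^2$, $\chi\chi'$, $\chi'\chi$, which would otherwise force $K$ to contain $C_\psi$ and therefore equal $H$. Combining Remark \ref{hcopi} with the fact that $V_\chi$ is the unique $3$-coideal summand of $R$ gives $m(1,\pi(\chi))=1$ and $m(1,\pi(\chi'))=m(1,\pi(\psi))=0$; using that $m(1,\pi(\chi^*))=m(1,\pi(\chi))$ by dualization, this forces $\chi=\chi^*$, and similarly $\chi'=\chi'^*$ and $\psi=\psi^*$. Lemma \ref{zetazeta*}(i) applied to $\psi$ (together with $G[\psi]=G(H)$) then gives $m(\chi,\psi\psi^*)\geq 3$, and comparing with $\deg(\chi\psi)=18=3\deg\psi$ via \eqref{adj} yields $\chi\psi=3\psi$. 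An entirely analogous decomposition argument for $\chi'\psi$ (using that $m(\chi',\chi'\psi)=m(\psi,\chi'^2)$ and the degree equation) forces $\chi'\psi=3\psi$ as well and also rules out $\rho'=\psi$ where $\chi'^2=1+a+a^2+\rho'$. Writing $\chi^2=1+a+a^2+\rho$, multiplication by $\psi$ gives $\rho\psi=6\psi$; if $\rho=\psi$ then $\psi^2=6\psi$, and the decomposition $\psi\psi^*=\sum_{g\in G(H)}g+n_\chi\chi+n_{\chi'}\chi'+m\psi$ with $n_\chi=3$ and $n_\chi+n_{\chi'}+2m=10$ would force $n_{\chi'}<0$, a contradiction. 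The identity $(\chi\chi')\psi=\chi(\chi'\psi)=9\psi$ combined with the expansion $(\chi\chi')\psi=3\psi+\psi^2$ when $\chi\chi'=\sum_{g\in G(H)\setminus F}g+\psi$ yields the same forbidden relation $\psi^2=6\psi$, and symmetrically for $\chi'\chi$. Consequently the degree-$6$ summands of $\chi^2,\chi'^2,\chi\chi',\chi'\chi$ all lie in $\{2\chi,2\chi',\chi+\chi'\}\subseteq K$, so $K$ is multiplicatively closed, completing the contradiction.

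For the case $\dim\overline{H}=6$, the quotient $\overline{H}\simeq kG'$ is a group algebra with $|G'|=6$, and $\dim R=10$. A parallel coideal-decomposition analysis forces $|G(H)\cap R|=1$, so $\pi|_{G(H)}$ is an isomorphism onto $G'$ and $H\simeq R\#kG(H)$ is a biproduct. I would then analyze the $G(H)$-grading $R=\bigoplus_{g}R_g$ in the spirit of Propositions \ref{braided-dih} and \ref{braided-a4}, using the dimensional constraints on the grades together with the existence of the $6$-dimensional irreducible coideal $V_\psi\subseteq H$, to conclude $\Supp(R)\subseteq F$; then $R\#kF$ is a proper Hopf subalgebra of $H$ of index $2$, hence normal. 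The main obstacle is this last step, since the hypothesis $\dim R=|G|+1$ of Propositions \ref{braided-dih} and \ref{braided-a4} does not hold here, and a dedicated grading analysis specific to $\dim R=10$ over a group of order $6$ is required.
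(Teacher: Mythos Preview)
There is a genuine gap in your $\dim\overline{H}=6$ plan: the target conclusion $\Supp(R)\subseteq F$ is actually impossible. It would make $R\#kF$ a Hopf subalgebra of $H$ of dimension $30$, but $H$ admits no subcoalgebra of that dimension, since its simple subcoalgebras have dimensions $1$ (six copies), $9$ (two copies) and $36$, and no subfamily of these sums to $30$. So whatever grading analysis you carry out, it cannot terminate with an index-$2$ normal Hopf subalgebra; the obstacle you flag is not merely technical but fatal to the approach.

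The paper proceeds quite differently. It first proves, with no hypothesis on $\overline H$, the decomposition $\chi\chi^*=\sum_{g\in F}g+\zeta$ (your $\psi$): otherwise all products of degree-$3$ characters would lie in the span of degree-$1$ and degree-$3$ characters, giving a Hopf subalgebra of dimension $24$, against Nichols--Zoeller. In particular $k[C]=H$ for every $9$-dimensional simple subcoalgebra $C$. When $\dim\overline H=10$ one has $H^{\co\pi}=kF\oplus V$, and \cite[Corollary 3.5.2]{ssld} gives directly that $kF$ is normal in $k[C]=H$; no appeal to Lemma~\ref{zetazeta*}(i) is needed, and the long character computation you outline is avoided. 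When $\dim\overline H=6$, one first checks that $H^{\co\pi}\cap kG(H)=k1$ (by a short dimension count), so $\pi|_{kG(H)}$ is an isomorphism and $\overline H$ is cocommutative; then Lemma~\ref{zetazeta*}(i) applied to $\chi$ forces $m(1,\pi(\chi\chi^*))\geq 3$, while the decomposition above gives $m(1,\pi(\chi\chi^*))=1+m(1,\pi(\zeta))\leq 2$, a contradiction. Note also that your $\dim\overline H=10$ argument invokes Lemma~\ref{zetazeta*}(i), which requires $\overline H$ to be a group algebra --- a hypothesis you have not verified and which can fail (for instance when $\overline H\simeq k^{D_5}$); the paper applies that lemma only in the $\dim\overline H=6$ case, after establishing cocommutativity.
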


\begin{proof} Let $\chi \neq \chi' \in H$ be the irreducible characters of
degree $3$. Then $g \chi = \chi = \chi g$, for all $g \in F$, and
$a \chi = \chi' = \chi a$, where $a\in G(H)$ is any element of
order $2$. Then there are decompositions
\begin{equation}\label{chichi} \chi\chi^* = \chi'(\chi')^* = \sum_{g \in F}g +
\zeta, \end{equation} where $\zeta$ is the irreducible character
of degree $6$. Otherwise, the product of irreducible characters of
degree $3$ would be a sum of irreducible characters of degree $1$
and $3$, implying that there is a Hopf subalgebra of coalgebra
type $(1, 6; 3, 2)$, which is impossible by \cite{NZ}. In
particular, $k[C] = H$ for all simple subcoalgebras $C$ of
dimension $9$.

Suppose first that there is a quotient $\pi: H \to \overline H$,
with $\dim \overline H = 10$. Then $H^{\co \overline H} = kF
\oplus V$, where $V$ is an irreducible character of degree $3$.
Let $C\subseteq H$ be the simple subcoalgebra containing $V$. By
\cite[Corollary 3.5.2]{ssld}, $kF$ is normal in $k[C] = H$. Then
$H$ is not simple.

Consider the case where there is a quotient $\pi: H \to \overline
H$, with $\dim \overline H = 6$. We first claim that $\overline H$
must be cocommutative. To see this, we consider the intersection
$H^{\co \pi} \cap kG(H)$ and the possible decompositions of
$H^{\co \pi}$ as a left coideal of $H$. We have $\dim H^{\co \pi}
= 10$. Counting dimensions  we get that $\dim H^{\co \pi} \cap
kG(H) \neq 2$. Also, $\dim H^{\co \pi} \cap kG(H)$ is not
divisible by $3$, by \cite{NZ}. Hence $H^{\co \pi} \cap kG(H) =
k1$, implying that the restriction of $\pi$ induces an isomorphism
$\pi: kG(H) \to \overline H$. Thus $\overline H$ is cocommutative,
as claimed.

\medbreak Counting dimensions, we see that the multiplicity of a
simple comodule of dimension $6$ in $H^{\co \overline H}$ can be
$1$ or $0$. Let $\zeta \in H$ be the irreducible character of such
a comodule. By Remark \ref{hcopi}, $m(1, \pi(\zeta)) = 1$ or $0$.
Combining this with the decomposition \eqref{chichi}, we get that
$m(1, \pi(\chi\chi^*)) = 2$ or $1$. This contradicts Lemma
\ref{zetazeta*}. The contradiction shows that $H$ is not simple
and  finishes the proof of the proposition. \end{proof}

\begin{proposition} Suppose $H$ is of type $(1, 6; 3, 6)$ as a coalgebra. Then $H$ is
not simple. \end{proposition}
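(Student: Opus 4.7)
Suppose for contradiction that $H$ is simple. By the discussion preceding the proposition, $H$ admits a Hopf algebra quotient $\pi : H \to \overline H$ of dimension $10$ or $6$, and I would treat each case separately.

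For $\dim \overline H = 10$: $\dim H^{\co \pi} = 6$. Decomposing $H^{\co \pi}$ into irreducible left coideals (which must have dimension $1$ or $3$) and setting $n_1 := |G(H) \cap H^{\co \pi}|$, the relation $n_1 + 3 n_3 = 6$ together with $n_1 \mid 6$ forces $(n_1, n_3) \in \{(6, 0), (3, 1)\}$. If $n_1 = 6$, then $H^{\co \pi} = kG(H)$ is already a Hopf subalgebra, hence normal. If $n_1 = 3$, then $H^{\co \pi} = kF \oplus V$ with $V$ an irreducible $3$-dimensional left coideal contained in a simple subcoalgebra $C$ of dimension $9$; \cite[Corollary 3.5.2]{ssld} then yields $kF$ normal in $k[C]$. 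Writing $\chi \chi^* = \sum_{g \in F} g + \mu$ for the character $\chi$ of $C$, $\mu$ must be a sum of two degree-$3$ characters, so $k[C]$ contains $kF$, $C_\chi$, $C_{\chi^*}$, and the simple subcoalgebras of the summands of $\mu$; a dimension count then gives $\dim k[C] > 21$, and since $\dim k[C] \mid 60$, it follows that $\dim k[C] \in \{30, 60\}$. In the first case $k[C]$ has index $2$ in $H$ and is therefore normal; in the second $kF$ is normal in $H$. Either way $H$ is not simple.

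For $\dim \overline H = 6$: $\dim H^{\co \pi} = 10$, and $n_1 + 3 n_3 = 10$ with $n_1 \mid 6$ forces $(n_1, n_3) = (1, 3)$. Thus $H^{\co \pi} = k1 \oplus V_1 \oplus V_2 \oplus V_3$ as left coideals, $\pi|_{kG(H)}$ is injective, and a dimension count yields an isomorphism $\pi|_{kG(H)} : kG(H) \to \overline H$. By Remark \ref{hcopi}, the characters $\zeta_i$ of $V_i$ are exactly the degree-$3$ irreducibles with $m(1, \pi(\zeta_i)) = 1$. Since $G[\chi] = F$ for each degree-$3$ character $\chi$, and $F$ has no fixed points on $G(H)$ under left translation, $\pi(\chi) \in kG(H)$ is the sum over a single $F$-coset, in particular multiplicity-free; Lemma \ref{zetazeta*}(i) then forces the equality $m(1, \pi(\chi\chi^*)) = 3$. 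Writing $\chi \chi^* = \sum_{g \in F} g + \mu$ and noting that $\pi(f) \neq 1$ for $f \in F \setminus \{1\}$, we extract $m(1, \pi(\mu)) = 2$, so $\mu$ is a sum of two elements of $\{\zeta_1, \zeta_2, \zeta_3\}$ (counted with multiplicity).

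Putting these pieces together, I would show that $A := kF \oplus C_{\zeta_1} \oplus C_{\zeta_2} \oplus C_{\zeta_3}$ is a Hopf subalgebra of $H$: it is visibly a subcoalgebra; closed under the antipode, since $\pi(\zeta_i^*) = \pi(\zeta_i)$ (because $F$ is closed under inversion) forces $\zeta_i^* \in \{\zeta_1, \zeta_2, \zeta_3\}$; and multiplicatively closed, because $\zeta_i \zeta_i^* \in A$ by the previous step, $f \zeta_i = \zeta_i f = \zeta_i$ for $f \in F$, and for $\zeta_j \neq \zeta_i^*$ in $\{\zeta_1, \zeta_2, \zeta_3\}$ the product $\zeta_i \zeta_j$ has no grouplike summand (a check via \eqref{adj} and the right $F$-stabilizer of $\zeta_i^*$), while $m(1, \pi(\zeta_i \zeta_j)) = 3$ forces its three degree-$3$ irreducible summands into $\{\zeta_1, \zeta_2, \zeta_3\}$. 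Thus $\dim A = 30$, so $A$ has index $2$ in $H$ and is therefore normal, contradicting simplicity. The principal obstacle will be to carry out this fusion-rule bookkeeping rigorously, and to invoke cleanly the fact that a Hopf subalgebra of index $2$ in a semisimple Hopf algebra is automatically normal.
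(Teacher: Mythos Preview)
Your treatment of the case $\dim\overline H=10$ has a gap. The dimension count ``$\dim k[C]>21$'' implicitly assumes $\chi\neq\chi^*$, so that $C_\chi$ and $C_{\chi^*}$ contribute two distinct $9$-dimensional summands. But nothing you have said excludes $\chi=\chi^*$ with $\chi^2=\sum_{g\in F}g+2\chi$; in that situation $kF\oplus C_\chi$ already has dimension $12$ and could be all of $k[C]$. This is exactly the case the paper must handle: assuming $H$ simple, the possibilities $\dim k[C]\in\{30,60\}$ are disposed of as you do, but for $\dim k[C]=12$ the paper observes that $k[C]$ is then of coalgebra type $(1,3;3,1)$, hence commutative, so $kF$ is normal in $k[C]$; since $kF$ is also normal in $kG(H)$ and $k[G(H),C]=H$ (its dimension is a multiple of $12$ strictly larger than $12$, hence $60$), one concludes $kF\trianglelefteq H$. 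Adding this short argument repairs your proof.

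Your handling of $\dim\overline H=6$ is correct and reaches the same index-$2$ Hopf subalgebra $kF\oplus C_{\zeta_1}\oplus C_{\zeta_2}\oplus C_{\zeta_3}$ as the paper, but by a different route. The paper uses the biproduct decomposition $H=R\#kG(H)$ and checks that $V_i\#kF=C_i$ (since $F$ stabilizes each $\zeta_i$), whence $R\#kF$ is a sub\emph{coalgebra} and therefore a Hopf subalgebra of dimension $30$. You instead verify directly, via the projection $\pi$ and Remark~\ref{hcopi}, that the set $\{1\}\cup F\cup\{\zeta_1,\zeta_2,\zeta_3\}$ spans a standard subalgebra of the character ring. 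Your argument is a bit longer but avoids the biproduct formalism; one point worth making explicit is that each $\pi(\zeta_i)$ is multiplicity-free (being a single $\pi(F)$-coset), so by Remark~\ref{hcopi} the $\zeta_i$ are pairwise distinct, which is needed for the dimension count $\dim A=30$.
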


\begin{proof} Assume on the contrary that $H$ is simple. We first claim that there
is no quotient $\pi: H \to \overline H$, with $\dim \overline H =
10$.

Suppose on the contrary that such a quotient exists. Then $H^{\co
\overline H} = kF \oplus V$, where $V$ is an irreducible character
of degree $3$. Let $C\subseteq H$ be the simple subcoalgebra
containing $V$.

Since $H^{\co \overline H}$ is a subalgebra of $H$ and $V$ is the
only $3$-dimensional irreducible left coideal contained in $H^{\co
\overline H}$, then $gV = V = Vg$, for all $g \in F$.

By \cite[Corollary 3.5.2]{ssld}, $kF$ is normal in $k[C] = H$.
Since $\dim k[C] \geq 12$ and we are assuming that $H$ is simple,
then $k[C]$ is of dimension $12$ and moreover, the coalgebra type
of $k[C]$ is $(1, 3; 3, 1)$. In particular, $k[C]$ is commutative.
Consider the Hopf subalgebra $K = k[G(H), C]$. Since $kG(H), k[C]
\subseteq K$, then $K = H$, by dimension. On the other hand, the
subgroup $kF$ is normal in $kG(H)$ and also in $k[C]$, hence $kF$
is normal in $H$. This implies the claim.

\medbreak In view of the above, it follows from Proposition
\ref{conteo} that there is a quotient $\pi: H \to \overline H$,
with $\dim \overline H = 6$. We have $kG(H) \cap H^{\co \pi} =
k1$, by counting dimensions. Thus $\pi$ restricts to an
isomorphism $kG(H) \to \overline H$, implying that $H$ is a
biproduct $H = R \# kG(H)$.

As a left coideal of $H$, we must have a decomposition $R = k1
\oplus V_1 \oplus V_2 \oplus V_3$, where $V_i$ is an irreducible
left coideal of dimension $3$, $i = 1, 2, 3$.

Since $F$ stabilizes all irreducible characters of degree $3$,
then, for all $i = 1, 2, 3$, we have $(V_i \# 1) (1 \# g) = V_ig
\simeq V_i$. In particular, for each $i$, $V_i\#kF \subseteq C_i$,
where $C_i$ is the simple subcoalgebra containing $V_i$. Hence, by
dimension, $V_i\#kF = C_i$ is a subcoalgebra of $H$. This implies
that the subalgebra $K = R\#kF \subseteq H$ is also a
subcoalgebra, hence a Hopf subalgebra. Then $H$ is not simple,
since $|H: K| = 2$. \end{proof}

\subsection{Type (xii)} In this case $H$ is of type $(1, 15; 3, 5)$ as a
coalgebra. By Lemmas \ref{sub-qt} and \ref{quotients} we may
assume that $H$ has no quotient Hopf algebra of dimension $12$ or
$6$. By Proposition \ref{conteo} and the results in the previous
subsections, we may assume that $H^*$ is of type (iii), (ix) or
(xii).

\begin{proposition}\label{15} $H$ is not simple. \end{proposition}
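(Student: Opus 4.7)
The plan is to assume $H$ is simple and derive a contradiction by showing that $kF$ is a proper normal Hopf subalgebra of $H$, where $F$ denotes the unique subgroup of order $3$ in $G(H) \simeq \Z_{15}$. First I would record that for every irreducible character $\chi$ of degree $3$, the stabilizer $G[\chi]$ has order dividing both $9$ (by Nichols-Zoeller) and $15$, hence lies in $\{1,3\}$; since the orbit of $\chi$ under left multiplication by $G(H)$ cannot have $15$ elements (there are only $5$ degree-$3$ characters), one must have $G[\chi] = F$ for every such $\chi$. Consequently $G(H)/F \simeq \Z_5$ acts transitively on the five simple subcoalgebras $C_1,\ldots,C_5$ of dimension $9$.

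Next I would split according to the three possible coalgebra types of $H^*$. For case (iii), Lemma \ref{n=2}(i) furnishes a Hopf subalgebra $k^{D_5} \subseteq H^*$, equivalently a quotient $\pi : H \to k^{D_5}$ of dimension $10$; since $|G(k^{D_5})| = 2$ is coprime to $15$, $\pi|_{G(H)}$ is trivial, so $G(H) \subseteq H^{\co\pi}$, contradicting $\dim H^{\co\pi} = 6 < 15$. In cases (ix) and (xii), $G(H^*)$ contains a subgroup of order $5$, so $H$ admits a Hopf quotient $\pi : H \to L$ with $\dim L = 5$. Because $\pi(G(H)) \subseteq G(L)$ has order dividing $5$ and $G(H) \cap H^{\co\pi}$ has order dividing both $15$ and $\dim H^{\co\pi} = 12$, a brief case check (the trivial-image alternative being ruled out by $15 > 12$) yields $G(H) \cap H^{\co\pi} = F$; decomposing $H^{\co\pi}$ into irreducible left coideals of $H$ then forces
\[
H^{\co\pi} = kF \oplus V_1 \oplus V_2 \oplus V_3,
\]
with each $V_i$ a three-dimensional irreducible left coideal.

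Let $C$ be the simple subcoalgebra of dimension $9$ containing $V_1$. Since $F \subseteq G[\chi_{V_1}]$, I would invoke \cite[Corollary 3.5.2]{ssld} applied to the left coideal subalgebra $R = H^{\co\pi}$ to conclude that $kF$ is a normal Hopf subalgebra of $k[C]$. To upgrade this local normality to normality in $H$, I would form the Hopf subalgebra $K := \langle k[C], kG(H)\rangle$. The transitive $G(H)$-action on the five dimension-$9$ subcoalgebras gives $\{gC : g \in G(H)\} = \{C_1,\ldots,C_5\}$, so $K$ contains $kG(H) \oplus \bigoplus_{j=1}^{5} C_j$, a subspace of dimension $15 + 45 = 60$; hence $K = H$. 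Finally $kF$ is adjoint-stable under $kG(H)$ (since $G(H)$ is abelian) and under $k[C]$ (by the previous sentence), hence under the subalgebra $K = H$ they generate. Thus $kF$ is a normal Hopf subalgebra of $H$, contradicting simplicity.

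The main obstacle is the passage from the local normality of $kF$ inside $k[C]$ to global normality inside $H$; the transitivity of the $G(H)$-action on the five dimension-$9$ subcoalgebras is what makes this bridge work without having to apply \cite[Corollary 3.5.2]{ssld} separately for each $C_j$.
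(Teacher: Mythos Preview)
Your route is genuinely different from the paper's, but there is one concrete error and one gap that need attention.

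\textbf{Error in case (iii).} A Hopf subalgebra $k^{D_5}\subseteq H^*$ dualizes to a quotient $\pi:H\to (k^{D_5})^*\simeq kD_5$, not to $k^{D_5}$. Since $G(kD_5)=D_5$ has order $10$, the image $\pi(G(H))$ may well have order $5$, so your claim that $\pi|_{G(H)}$ is trivial is unjustified. The repair is easy: if $|\pi(G(H))|=5$ then $kF\subseteq H^{\co\pi}$ and, as $\dim H^{\co\pi}=6$, one gets $H^{\co\pi}=kF\oplus V$ with a \emph{unique} $3$-dimensional irreducible $V$; now \cite[Corollary~3.5.2]{ssld} applies cleanly and your transitivity argument finishes.

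\textbf{Gap in cases (ix)/(xii).} Every invocation of \cite[Corollary~3.5.2]{ssld} in this paper first checks that $gV=V=Vg$ holds as an equality of \emph{subspaces} for $g\in F$ (see the proofs for types $(1,6;3,6)$ and $(1,10;5,2)$), not merely that $g\chi_V=\chi_V$. With three summands $V_1,V_2,V_3$ in $H^{\co\pi}$ this equality is not automatic: left multiplication by $g\in F$ could send $V_1$ to a different sub-coideal with the same character. One fix is to replace $V_1$ by $H^{\co\pi}\cap C$, which is genuinely $F$-stable since $FC=C=CF$, and to argue according to its dimension.

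\textbf{Comparison with the paper's proof.} The paper works on the dual side, using the quotient $\pi:H^*\to k^{G(H)}$, whose coinvariants have dimension $4$. For types (iii) and (ix) (where $|G(H^*)|$ is even) any order-$2$ subgroup of $G(H^*)$ lands in $(H^*)^{\co\pi}$, and the remaining two dimensions must form an irreducible coideal of degree $2$; this is impossible for type (ix), and for type (iii) forces $(H^*)^{\co\pi}$ into the $10$-dimensional Hopf subalgebra $k^{D_5}$, contradicting $4\nmid 10$. For type (xii) one gets that $\pi|_{kG(H^*)}$ is an isomorphism, so $H\simeq R\#k\Z_{15}$ with $\dim R=4$, and \cite[Proposition~4.4.6]{ssld} finishes. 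This dual-side approach sidesteps both the dualization issue and the subspace-stability question; your approach, once patched, has the virtue of exhibiting an explicit normal Hopf subalgebra $kF\subseteq H$.
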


\begin{proof} There is a Hopf algebra quotient $\pi: H^* \to
k^{G(H)}$, and we have $\dim (H^*)^{\co \pi} = 4$. If $2$ divides
$|G(H^*)|$ and $\Gamma \subseteq G(H^*)$ is a subgroup of order
$2$, then $\Gamma \subseteq (H^*)^{\co \pi}$, by \cite{NZ}. Hence
we may assume that $(H^*)^{\co \pi} = k\Gamma \oplus V$, where $V$
is an irreducible left coideal of dimension $2$. This discards the
possibility (ix) for the coalgebra type of $H^*$.

Thus $H^*$ is of type (iii) in this case. By Proposition
\ref{conteo}, the simple subcoalgebras of dimensions $1$ and $4$
form a Hopf subalgebra $K$ of $H$, isomorphic to $k^{D_5}$. In
view of the decomposition of $(H^*)^{\co \pi}$, we have
$(H^*)^{\co \pi} \subseteq K$. But this is impossible since $\dim
(H^*)^{\co \pi} = 4$ does not divide $\dim K$.

\medbreak Therefore we may assume that $|G(H^*)|$ is odd, and thus
that $H^*$ is of type (xii) as a coalgebra. In this case
$\pi\vert_{kG(H^*)}: kG(H^*) \to k^{G(H)}$ is an isomorphism and
$H$ is a biproduct $H \simeq R \# k\Z_{15}$, where $R$ is a
Yetter-Drinfeld Hopf algebra over $\Z_{15}$ of dimension $4$. By
\cite[Proposition 4.4.6]{ssld}, $H$ is not simple.
\end{proof}

\subsection{Type (ix)} Here, $H$ is of type $(1, 10; 5, 2)$ as a
coalgebra.  By Lemma \ref{sub-qt}, we may assume that $H$ has no
quotient Hopf algebra of dimension $12$. By Proposition
\ref{conteo} and previous results, $|G(H^*)| = 2$ or $10$.

\begin{proposition}\label{10} $H$ is not simple. \end{proposition}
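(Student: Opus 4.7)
First I would pin down the character table: the two irreducible characters $\chi_1, \chi_2$ of degree $5$ have common stabilizer $G[\chi_1] = G[\chi_2] = F$, the unique subgroup of order $5$ in $G(H)$, since $|G[\chi_i]|$ divides $\gcd(25, 10) = 5$ and the orbit of $\chi_1$ under left multiplication by $G(H)$ has size $2$. Hence
\begin{equation*}
\chi_1 \chi_1^* = \sum_{g \in F} g + \alpha_1 \chi_1 + \alpha_2 \chi_2, \qquad \alpha_1 + \alpha_2 = 4.
\end{equation*}
The Hopf subalgebra $k[C_i]$ generated by the simple subcoalgebra containing $\chi_i$ has dimension $5 + 25 = 30$, and so is normal of index $2$ in $H$ (with $H$ then not simple), exactly when $\chi_i$ is self-dual and one of $\alpha_1, \alpha_2$ vanishes. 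By Proposition \ref{conteo} and the previous subsections the coalgebra type of $H^*$ is (ix), (iii), or (ii); by Lemma \ref{sub-qt}, $H$ has no quotient of dimension $12$. The plan is to dispatch each of these three cases in turn.

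For type (ix) I would work dually, considering $\pi \colon H^* \to k^{G(H)}$ with $R = (H^*)^{\co \pi}$ of dimension $6$. Since $G(H^*)$ has no subgroup of order $6$, a count of left coideal decompositions forces $R = k1 \oplus V$ with $V$ an irreducible $5$-dimensional coideal, so $\pi|_{kG(H^*)}$ is an isomorphism onto $k^{G(H)}$ and both $G(H), G(H^*)$ are abelian of order $10$. Lemma \ref{zetazeta*}(i) applied to the character $\eta_1$ of $V$ then gives $m(1, \pi(\eta_1 \eta_1^*)) = 1 + \beta_1 \geq 5$, and the self-adjointness of $\eta_1 \eta_1^*$ rules out $\eta_1^* = \eta_2$ (which would force $\beta_1 = \beta_2 = 2$); hence $\eta_1^* = \eta_1$ and $(\beta_1, \beta_2) = (4, 0)$, producing a normal Hopf subalgebra of dimension $30$ in $H^*$. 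For type (iii), the inclusion $k^{D_5} \subseteq H^*$ gives $\pi \colon H \to kD_5$ with $R = H^{\co \pi}$ of dimension $6$; the same dimension count forces $R = k1 \oplus V$, so $\pi$ restricts to an isomorphism $kG(H) \to kD_5$ and $H \simeq R \# kD_5$ is a biproduct. Proposition \ref{braided-dih}, applied with $n = 5$ and $\dim R = n+1$, then produces the normal Hopf subalgebra $R \# kT$ of index $2$.

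The delicate case will be $H^*$ of type (ii), giving $\pi \colon H \to kD_3$ with $R = H^{\co \pi}$ of dimension $10$. The decomposition of $R$ into irreducible left coideals of $H$ (all of dimension $1$ or $5$) forces $R = kG(H)$ or $R = kF \oplus V$, with $V$ a $5$-dimensional irreducible coideal. The first option is immediate: $R$ is $\mathcal S$-stable (being a group algebra) and ad-stable (as a subalgebra of coinvariants), hence a normal Hopf subalgebra. In the second option biproduct arguments fail because $\pi|_{kG(H)}$ is not injective, and $kF$ cannot itself be normal lest $H/HkF^+$ produce the forbidden dimension-$12$ quotient. I would then pass to the Wedderburn decomposition $kD_3 \simeq k \oplus k \oplus M_2(k)$: writing $A = \pi(\chi_1)$, $B = \pi(\chi_2)$, and using $\pi(F) = 1$ and $\pi(G(H) \setminus F) = \{s\}$ for some reflection $s \in D_3$, the character relations translate into quadratic identities among $A$, $B$, and $s$. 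Projecting to the sign representation and setting $a = A_{\mathrm{sgn}}$, $b = B_{\mathrm{sgn}}$, one obtains $a = \pm b$ from $A^2 = B^2$; the scalar equation for $a^2 - ab$ then rules out the case $\chi_1^* = \chi_2$ (it yields $2a^2 = -10$, impossible for real $a$) and in the remaining case $\chi_1^* = \chi_1$ produces the quadratic $a^2 - (\alpha_1 - \alpha_2) a - 5 = 0$, whose discriminant $(\alpha_1 - \alpha_2)^2 + 20$ must be a perfect square, forcing $(\alpha_1, \alpha_2) \in \{(4, 0), (0, 4)\}$. In either outcome $k[C_1]$ or $k[C_2]$ is a normal Hopf subalgebra of dimension $30$. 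This second sub-case of type (ii) is the main obstacle of the proof.
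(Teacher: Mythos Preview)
Your argument is correct but diverges substantially from the paper's in two of the three cases. For type (iii) you match the paper exactly: both get a biproduct $R\# kD_5$ with $\dim R = 6$ and invoke Proposition~\ref{braided-dih}. For type (ix), however, the paper does not dualize; it treats the cases $\dim B = 10$ uniformly, obtains a biproduct $R\# kG(H)$ with $\dim R = 6$, argues that $R$ is cocommutative (via \cite[3.2.7, 3.3.2]{ssld}, using that the stabilizer of a simple $25$-dimensional subcoalgebra is cyclic of order $5$), and then shows that if $G(H)$ is cyclic some nontrivial subgroup must act trivially on the five nontrivial group-likes of $R$ (since $\mathbb S_5$ has no element of order $10$), giving a central subgroup. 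Your character computation via Lemma~\ref{zetazeta*}(i), forcing $\beta_1 = 4$, $\beta_2 = 0$ and hence a $30$-dimensional Hopf subalgebra of $H^*$, is a valid and rather elegant alternative that avoids the cocommutativity argument entirely.

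The sharpest contrast is in type (ii). The paper disposes of this case in two lines: from $H^{\co\pi} = kF \oplus U$ one has $gU = U = Ug$ for all $g\in F$ (as $U$ is the unique $5$-dimensional summand of the subalgebra $H^{\co\pi}$), and then \cite[Corollary 3.5.2]{ssld} gives $kF$ normal in $k[C]$; since $F$ is normal in $G(H)$ and $k[G(H),C] = H$, one concludes $kF$ is normal in $H$. Your route through the sign character of $D_3$ is considerably longer but genuinely independent of that cited result. Two points worth making explicit in your write-up: first, $b = -a$ holds outright because $\chi_2 = h\chi_1$ with $\pi(h) = s$ a reflection, so you need not pass through $A^2 = B^2$; second, the reason $a$ is a (real) integer is that $\pi(\chi_1)$ decomposes as a sum $\sum_{i=1}^5 g_i$ with $g_i\in D_3$ (exactly as in the proof of Lemma~\ref{zetazeta*}(i)), whence $a = \sum_i \operatorname{sgn}(g_i)$ is an odd integer. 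With those two remarks your discriminant argument goes through cleanly.
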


\begin{proof} By Proposition
\ref{conteo}, we may further assume there is a Hopf algebra
quotient $\pi: H \to B$, where $\dim B = 6$ or $10$.

If $\dim B = 6$, then $\dim H^{\co B} = 10$ and, by \cite{NZ},
$H^{\co B}\cap kG(H) = kF$, where $F$ is the unique subgroup of
order $5$ of $G(H)$. Then $H^{\co B} = kF \oplus U$, where $U$ is
an irreducible left coideal of dimension $5$. Then, for all $g \in
F$, $gV = V = Vg$, and  by \cite[Corollary 3.5.2]{ssld}, $kF$ is a
normal Hopf subalgebra in $k[C]$, where $C$ is the simple
subcoalgebra containing $U$. Since $F$ is also normal in $G(H)$,
then $kF$ is normal in $k[G(H), C] = H$. Hence $H$ is not simple
in this case.

Finally, suppose $\dim B = 10$. Then  $H^{\co B} \cap G(H) = k1$,
in view of \cite{NZ} and the coalgebra type of $H$. Then $\pi$
induces an isomorphism $kG(H) \simeq B$. Thus $H$ is a biproduct
$H = R \# kG(H)$, where $R$ is a $6$-dimensional Yetter-Drinfeld
(braided) Hopf algebra over $G(H)$. Moreover, $R \simeq
H/HkG(H)^+$ as coalgebras. Since the stabilizer of a simple
subcoalgebra of $H$ is cyclic of order $5$, then $R$ is
cocommutative, by Remark \ref{coalg-r} and \cite[Remark 3.2.7 and
Corollary 3.3.2]{ssld}.

The action of $G(H)$ permutes the $5$ nontrivial group-like
elements of $R$. If $G(H)$ is cyclic, then it contains a
nontrivial subgroup $F$ acting trivially on $R$ (since $\mathbb
S_5$ does not have elements of order $10$). Then $kF$ would be a
normal Hopf subalgebra of $H$ \cite[Lemma 4.4.4]{ssld}. Therefore
we may assume that $G(H) \simeq D_5$. Now the result follows from
Proposition \ref{braided-dih}. \end{proof}

\subsection{Type (xi)} The following proposition says that the simple Hopf algebra $\mathcal A_0 \simeq (k\mathbb A_5)^J$ is indeed
characterized by its coalgebra type. This has already been shown
for the other two simple examples, $\A_1$ and $\mathcal B$, in
\cite{BN}.

\begin{proposition}\label{a0} Suppose $H$ is of type $(1, 12; 4, 3)$ as a
coalgebra. If $H$ is simple, then $H \simeq \A_0$.
\end{proposition}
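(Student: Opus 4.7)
My plan is to identify $H^*$ with $\A_1$, so that $H\simeq\A_1^*=\A_0$. Since simplicity is self-dual, $H^*$ is a simple semisimple Hopf algebra of dimension $60$, and by Proposition \ref{conteo} combined with the non-simplicity results proved in the previous subsections of Section \ref{pf}, the coalgebra type of $H^*$ must be one of (i) $(1,1;3,2;4,1;5,1)$, (vii) $(1,4;2,6;4,2)$, or (xi) $(1,12;4,3)$.

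Type (vii) is immediately discarded: \cite[Corollary 6.12]{BN} would force $H^*\simeq\B$, and the self-duality of $\B$ would give $H\simeq\B$, contradicting the assumed coalgebra type of $H$. If $H^*$ is of type (i), then \cite[Proposition 6.10]{BN} gives $H^*\simeq k^{\mathbb A_5}$ or $H^*\simeq\A_1$; the first option would imply $H\simeq k\mathbb A_5$, of coalgebra type $(1,60)$, again a contradiction, so in that case $H^*\simeq\A_1$ and the proof is complete. The heart of the matter is therefore to exclude the symmetric possibility that $H^*$ is again of type (xi).

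Assume $|G(H^*)|=12$. The twelve pairwise inequivalent one-dimensional representations of $H$ yield a surjective Hopf algebra map $\pi:H\to k^{G(H^*)}$ onto a commutative Hopf algebra of dimension $12$, and $R:=H^{\co\pi}$ is a left coideal subalgebra of dimension $5$. The intersection $R\cap kG(H)$ is spanned by a subgroup $S\le G(H)$, and a dimension count---the remaining $5-|S|$ dimensions of $R$ must decompose into irreducible left $H$-coideals of dimensions only $1$ or $4$---forces $|S|=1$ and $R=k\cdot 1\oplus V$ with $V$ a four-dimensional irreducible left coideal. The condition $G(H)\cap R=\{1\}$ then makes $\pi|_{kG(H)}:kG(H)\to k^{G(H^*)}$ an injective, hence bijective, Hopf algebra map; this forces $G(H)$ to be abelian and exhibits $H$ as a biproduct $H\simeq R\#kG(H)$, with $R$ a five-dimensional semisimple braided Hopf algebra over the abelian group $G(H)$.

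The main obstacle is now to derive a contradiction from this biproduct description. I would analyze the $G(H)$-grading $R=\bigoplus_{g\in G(H)}R_g$ together with the algebra constraint $V\cdot V\subseteq R=k\cdot 1\oplus V$: semisimplicity of $R$, inherited from cleftness and the semisimplicity of $H$, rules out $V\cdot V=0$; an associativity argument in the spirit of Proposition \ref{braided-dih} rules out $V\cdot V\subseteq k\cdot 1$ using $\dim V=4>1$; and examining the coalgebra structure of $R$ as a braided Hopf algebra, which cannot be $R\simeq k^{\Z_5}$ (this would have five coalgebra grouplikes, incompatible with $V$ being irreducible as a left $H$-coideal), leaves $R$ of coalgebra type $(1,1;2,1)$. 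This in turn should force $\Supp(R)$ to lie in a proper subgroup $\Gamma\subsetneq G(H)$, so that $R\#k\Gamma$ is a proper Hopf subalgebra of $H$; normality of $\Gamma$ in the abelian $G(H)$ combined with stability of $R$ under the adjoint action then gives $R\#k\Gamma$ as a proper normal Hopf subalgebra, contradicting the simplicity of $H$. This completes the argument and yields $H\simeq\A_0$.
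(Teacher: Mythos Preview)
Your reduction of the coalgebra type of $H^*$ to (i), (vii), or (xi) is circular. The non-simplicity of types (ii), (iii), and (vi) is established only in the subsections \emph{following} Proposition \ref{a0}, and those arguments invoke Proposition \ref{a0} itself (for instance, Lemma \ref{2-1} uses it to exclude type (xi) for the dual). At this stage of the paper one can only say---using Subsections \ref{five}--\ref{10}, Proposition \ref{deg2}, and Proposition \ref{conteo}---that $H^*$ is of type (i), (ii), (iii), (vi), (vii), or (xi); type (ii) then falls to Lemma \ref{sub-qt}(i) applied to $H^*$ (which has the quotient $k^{G(H)}$ of dimension $12$), and type (vii) to self-duality of $\B$, but types (iii) and (vi) remain. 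The paper avoids this problem by not pinning down the type of $H^*$ at all: it observes that in every surviving case $H^*$ contains a Hopf subalgebra of dimension $10$ or $12$, hence $H$ admits a quotient $B$ with $\dim B\in\{10,12\}$, and then analyses $H^{\co B}$ directly. The case $\dim B=10$ is dispatched in two lines, and the case $\dim B=12$ leads to the biproduct---but here $B$ need not be commutative, so one cannot conclude that $G(H)$ is abelian, and the possibility $G(H)\simeq\mathbb A_4$ must be handled separately via Proposition \ref{braided-a4}.

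Even granting your restricted setting ($H^*$ of type (xi), $G(H)$ abelian), the biproduct argument contains a genuine error. You assert that $R$ cocommutative as a braided coalgebra would be ``incompatible with $V$ being irreducible as a left $H$-coideal''; this is false. The braided comultiplication $\Delta_R$ is \emph{not} the restriction of $\Delta_H$, and the decomposition $R=k1\oplus V$ with $V$ an irreducible $4$-dimensional left $H$-coideal is forced purely by the coalgebra type $(1,12;4,3)$ of $H$, independently of whether $R$ is cocommutative. The paper handles the cocommutative case by letting $G(H)$ act on the four nontrivial group-likes of $R$ and showing faithfulness forces $G(H)\simeq\mathbb A_4$; in the non-cocommutative case it passes to the stabilizer $\Gamma\subseteq G(H)$ of the simple subcoalgebra $C\supseteq V$, obtains $C=U\#k\Gamma$ and hence $\Supp(R)\subseteq\Gamma$, and produces the normal (or central-element-yielding) Hopf subalgebra $R\#k\Gamma$ of dimension $20$. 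Your claim that the coalgebra type $(1,1;2,1)$ ``should force $\Supp(R)$ to lie in a proper subgroup'' is exactly this step, but you give no mechanism for it; the stabilizer argument is what is missing.
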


\begin{proof} Suppose that $H$ is simple.
If $G(H^*) = 1$, then we know from \cite[Proposition 6.10]{BN}
that $H^* \simeq \A_1$. Hence $H \simeq \A_0$. So we may assume
that there is a proper Hopf algebra quotient $\pi: H \to B$.

By Proposition \ref{conteo}, Lemma \ref{sub-qt}, and the previous
results, we may further assume that $\dim B = 10$ or $12$. The
first possibility implies that a subgroup $F \simeq \Z_3$ of
$G(H)$ must be contained in $H^{\co B}$. Since $\dim H^{\co B} =
6$, and $H$ has no irreducible left coideals of dimension $3$,
then $H^{\co B} \subseteq kG(H)$ is a normal Hopf subalgebra of
$H$.

Then $\dim B = 12$ and we see, after decomposing $H^{\co B}$ into
a sum of irreducible left coideals, that $\pi\vert_{kG(H)} : kG(H)
\to B$ must be an isomorphism. Then $H$ is a biproduct $R\#kG(H)$,
where $R$ is a Yetter-Drinfeld Hopf algebra of dimension $5$ over
$G(H)$. By Proposition \ref{braided-a4}, we may assume that $G(H)$
is not isomorphic to $\mathbb A_4$.

\medbreak If $R$ is cocommutative, the action of $G(H)$ on $R$
being by coalgebra automorphisms, must permute the set
$G(R)\backslash \{ 1\}$ of nontrivial group-likes in $R$. Thus, it
induces a group homomorphism $\theta: G(H) \to \mathbb S_4$. The
group algebra of the kernel of $\theta$ is a normal Hopf
subalgebra of $H$ \cite[Lemma 4.4.4]{ssld}. Then  $G(H)$ acts
faithfully on $G(R)\backslash \{ 1\}$. Therefore, $G(H)$ is
isomorphic to $\mathbb A_4$ (the only subgroup of $\mathbb S_4$ of
order $12$), against our assumption. Thus  we may assume that $R$
is not cocommutative.

As a left coideal of $H$, $R = k1 \oplus U$, where $U$ is an
irreducible left coideal of dimension $4$. Let $C \subseteq H$ be
the simple subcoalgebra containing $U$, and let $\Gamma \subseteq
G(H)$ be the stabilizer of $C$: that is, $Cg = C$, for all $g \in
\Gamma$. So that $\Gamma$ is of order $4$ and $\Gamma$ is not
cyclic, otherwise $R$ would be cocommutative, in view of
\cite[Remark 3.2.7 and Corollary 3.3.2]{ssld}.

For all $g \in \Gamma$, we have $U \# g = Ug \simeq U$. Hence $U
\# g \subseteq C$, and therefore, by dimension, $C = U \#
k\Gamma$. By Remark \ref{coalg-r}, the coaction of $kG(H)$ on $R$
is given by $\rho = (\epsilon_R \otimes \id)\Delta: R \to kG(H)
\otimes R$. Then $\rho(U) \subseteq k\Gamma \otimes R$, and thus
$\rho(R) \subseteq k\Gamma \otimes R$. By \cite[Lemma
4.3.1]{ssld}, $K = R\# k\Gamma$ is a Hopf subalgebra of $H$ of
dimension $20$. The coalgebra structure of $H$ forces $K$ to be
commutative.

If $\Gamma$ is normal in $G(H)$, then $K$ is normal in $H$, since
$K$ and $G(H)$ generate $H$ as an algebra. Then we can assume that
$\Gamma$ is not normal, and therefore $G(H)$ is a semidirect
product $G(H) = T \rtimes \Gamma$, where $T$ is a subgroup of
order $3$, with respect to an action $\Gamma \to \Aut T  \simeq
\Z_2$ by group automorphisms. This implies that $\Gamma$ has an
element $g\neq 1$ which is central in $G(H)$. Then $g$ is central
in $H$, because $g \in K$, which is commutative, and $K$ and
$G(H)$ generate $H$ as an algebra. This finishes the proof of the
proposition.
\end{proof}

\subsection{Type (ii)}\label{case-2} Let $B \simeq k^{D_3} \subseteq H$ be
the unique Hopf subalgebra of dimension $6$.

\begin{lemma}\label{2-1} Suppose $H$ is simple. Then $H^*$ is of type (iii) as a coalgebra. \end{lemma}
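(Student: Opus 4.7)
The plan is to show that the only coalgebra type for $H^*$ compatible with $H$ being simple (and of type (ii)) is (iii), by eliminating all other possibilities. Since simplicity is self-dual, $H^*$ is also simple. Combining Proposition \ref{conteo} with the earlier results of this section (Propositions \ref{deg2}, \ref{b}, \ref{n=3}, \ref{15}, \ref{10}, \ref{a0}, and Lemmas \ref{sub-qt}, \ref{quotients}), the only coalgebra types admitting simple examples in dimension 60 are (i), (ii), (iii), (vii), and (xi); the specific simple Hopf algebras in types (i), (vii), and (xi) are $k^{\mathbb A_5}$, $\A_1$, $\B$, and $\A_0$. If $H^*$ were isomorphic to any of these four, then $H=(H^*)^*$ would be isomorphic to one of $k\mathbb A_5$, $\A_0$, $\B$, $\A_1$, forcing $|G(H)|\in\{60,12,4,1\}$, contradicting $|G(H)|=2$. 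Hence $H^*$ must be of coalgebra type (ii) or (iii).

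To rule out type (ii) for $H^*$, I would suppose for contradiction that $H^*$ has type (ii). Then by Proposition \ref{conteo}, $H^*$ contains a Hopf subalgebra $K\simeq k^{D_3}$, and dually $H$ admits a Hopf algebra surjection $\pi\colon H\to kD_3$ with $\dim H^{\co\pi}=10$. By Remark \ref{hcopi}, one has $\sum_\zeta m(1,\pi(\zeta))\,\deg\zeta=10$, where the sum ranges over irreducible characters $\zeta$ of $H$. Applying $\pi$ to the relations $\lambda^2=1+a+\lambda$ and $a\lambda=\lambda$ valid in $B\simeq k^{D_3}$ gives $\pi(\lambda)^2=1+\pi(a)+\pi(\lambda)$ and $\pi(a)\pi(\lambda)=\pi(\lambda)$. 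A case analysis on $m(1,\pi(a))$ and $m(1,\pi(\lambda))$, combined with the dimension equation, forces $m(1,\pi(a))=m(1,\pi(\lambda))=0$; setting $t:=\pi(a)$, this makes $t$ a reflection in $D_3$ and $\pi(\lambda)=g+tg$ for some $g\in D_3\setminus\{1,t\}$.

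The analysis then splits by the coalgebra type of $H$. For $(1,2;\,2,1;\,3,2;\,6,1)$: the relation $a\mu=\mu$ makes $\pi(\mu)$ a $t$-invariant sum of six group elements of $D_3$, and the equality case of Cauchy--Schwarz in $m(1,\pi(\mu)^2)=\sum_x n_x^2\geq 6$ (from Lemma \ref{zetazeta*}(i)) forces $\pi(\mu)=\sum_{x\in D_3}x$. Then $\chi_1\chi_1^*=1+\lambda+(\text{degree-}6\text{ part})$; the constraint $\pi(\chi_1^*)=\mathcal S(\pi(\chi_1))$ together with $\chi_1^*\in\{\chi_1,\chi_2\}$ restricts $\pi(\chi_1)$ to two specific shapes (up to $D_3$-symmetry), each of which yields a negative coefficient in $\pi(\chi_1\chi_1^*)-1-\pi(\lambda)$, contradicting that this expression must equal $\pi(\alpha)+\pi(\beta)$ for some degree-3 characters $\alpha,\beta$. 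For $(1,2;\,2,1;\,3,6)$: the pairing of the six degree-3 characters by left multiplication by $a$ constrains the distribution of the $m(\chi_i)$ (whose sum is 3), and an analogous analysis of $\pi(\chi_i\chi_i^*)-1-\pi(\lambda)$ in every admissible configuration produces a negative coefficient or a non-integer multiplicity. The main obstacle is the case enumeration of the shapes of $\pi(\chi_i)$: the interplay between the $D_3$-symmetry and the pairing of characters induced by $G(H)$ must be handled carefully to verify that positivity or integrality fails in each case.
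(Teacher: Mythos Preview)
Your reduction of the possible coalgebra types of $H^*$ to (ii) and (iii) is essentially correct, but your list is slightly off: type (vi), namely $(1,4;2,2;4,3)$, has \emph{not} been excluded by the results you cite (it is handled only in the final subsection of the paper). You should include it and then rule it out exactly as the paper does: if $H^*$ has type (vi), then by Proposition~\ref{conteo}(vi) it contains a Hopf subalgebra of dimension $12$, so dually $H$ has a quotient Hopf algebra of dimension $12$, and Lemma~\ref{sub-qt}(i) (applied to $H$, which has type (ii)) gives that $H$ is not simple.

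For the exclusion of type (ii) for $H^*$, your proposed character-theoretic case analysis is unnecessarily elaborate and, as you yourself acknowledge, not completed. The paper's argument bypasses all of this with a single structural observation. One has $\dim H^{\co\pi}=10$, and the irreducible left coideals of $H$ lying in $B$ have dimensions $1$ or $2$, while those outside $B$ have dimensions $3$ or $6$. Since $B^{\co\pi}=B\cap H^{\co\pi}$ is a left coideal subalgebra of $B$, its dimension divides $6$; but no choice of $\dim B^{\co\pi}\in\{2,3,6\}$ leaves a residual dimension in $\{8,7,4\}$ expressible as a sum of $3$'s and $6$'s. Hence $B^{\co\pi}=k1$, so $\pi\vert_B\colon B\to kD_3$ is injective and therefore an isomorphism. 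This is impossible because $B\simeq k^{D_3}$ is commutative but not cocommutative, whereas $kD_3$ is cocommutative. Note that your intermediate claim $m(1,\pi(a))=m(1,\pi(\lambda))=0$ is exactly the statement $B\cap H^{\co\pi}=k1$ (via Remark~\ref{hcopi}); the point you missed is that this already forces $\pi\vert_B$ to be an isomorphism, which is an immediate contradiction --- no enumeration of the shapes of $\pi(\chi_i)$ or $\pi(\mu)$ is needed.
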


\begin{proof} In view of previous results, the possible types for $H^*$ can be (ii), (iii) and (vi).
Type (vi) is discarded by Lemma \ref{sub-qt}. Thus it is enough to
discard the possibility of $H^*$ being also of type (ii). Suppose
on the contrary that this occurs. Then there is a quotient Hopf
algebra $\pi: H \to kD_3$, and we have $\dim H^{\co \pi} = 10$.
Counting dimensions in the possible decompositions of $H^{\co
\pi}$ as a left coideal of $H$, we see that $B^{\co \pi} = B \cap
H^{\co \pi} = k1$, since $\dim B^{\co \pi}$ must divide $\dim B$.

Hence $\pi$ restricts to an isomorphism $B \to kD_3$, which is a
contradiction, because $B$ is not cocommutative. Thus $H$ has no
quotient Hopf algebra isomorphic to $kD_3$, and the lemma is
proved. \end{proof}

\begin{proposition} $H$ is not simple. \end{proposition}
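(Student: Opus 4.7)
The plan is to exploit the surjective Hopf algebra map $\pi\colon H\to kD_5$ that exists by Lemma~\ref{2-1} and Proposition~\ref{conteo}: since $H^*$ has coalgebra type~(iii), it contains the Hopf subalgebra $k^{D_5}$, which dualizes to $\pi$. One has $\dim H^{\co\pi}=6=\dim B$, so my starting point is to compare $B$ with $H^{\co\pi}$ inside $H$. I would first analyze $\pi|_B\colon k^{D_3}\to kD_5$: its image is a commutative Hopf subalgebra of $kD_5$, hence of the form $k\Gamma$ with $\Gamma\le D_5$ abelian; and such a map dualizes to an injection $\widehat\Gamma\hookrightarrow D_3$. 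Since $D_3$ has no element of order~$5$, this forces $\Gamma\in\{1,\Z_2\}$, and in the two cases one gets $\pi(a)=1$ and $\pi(a)$ equal to the nontrivial element of $\Gamma$, respectively.

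If $\pi(a)=1$, then $\pi|_B$ is trivial, so $B\subseteq H^{\co\pi}$, and by equality of dimensions $B=H^{\co\pi}$; hence $\pi$ is normal and $H$ is not simple. The remaining work is to rule out the case $\pi(a)=t$ with $t$ a nontrivial reflection in $D_5$. There $a\notin H^{\co\pi}$, and decomposing $H^{\co\pi}$ into irreducible left coideals of $H$ (using the possible dimensions from the coalgebra type of $H$) forces
\[
H^{\co\pi}=k1\oplus V_2\oplus V_3,
\]
where $V_2$ has character $\chi$ (the unique $2$-dimensional character of $H$) and $V_3$ has character some $3$-dimensional $\psi$.

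Next I would determine $\pi(\chi)$ and constrain $\pi(\psi)$. Self-duality of $\chi$ together with $m(1,\pi(\chi))=1$ (Remark~\ref{hcopi}) gives $\pi(\chi)=1+y$ for an involution $y\in D_5$; applying $\pi$ to the relation $\chi^2=1+a+\chi$ coming from Lemma~\ref{n=2} and comparing with $(1+y)^2=2+2y$ forces $y=t$, so $\pi(\chi)=1+t$. For $\psi$, one combines $m(1,\pi(\psi))=1$, $m(1,\pi(a\psi))=m(t,\pi(\psi))=0$ (because $V_\psi$ is the only $3$-dimensional coideal summand of $H^{\co\pi}$), and the observation that $V_{\psi^*}$ must likewise appear as a $3$-dimensional summand, so $\psi^*=\psi$. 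This yields $\pi(\psi)=1+z_1+z_2$ with $z_1,z_2\in D_5\setminus\{1,t\}$ and $\{z_1,z_2\}$ closed under inversion---so either the $z_i$ are reflections (possibly equal) or $z_2=z_1^{-1}$ with $z_1$ of order~$5$.

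The contradiction will come from comparing two expressions for $\pi(\psi^2)$. A direct case analysis in $kD_5$---using that $D_5$ has no element of order~$4$ and that the product of two reflections (or of two rotations) in $D_5$ is a rotation---shows that in every subcase the coefficient of $t$ in $\pi(\psi^2)=\pi(\psi)^2$ vanishes. On the other hand, using $|G[\psi]|=1$ one writes $\psi^2=1+c_2\chi+\sum_jc_j\psi_j+c_6\rho$, where $\rho$ is the eventual $6$-dimensional character (absent in the subtype $(1,2;2,1;3,6)$, in which case $c_6=0$); applying $\pi$ and collecting the $t$-coefficient gives $c_2+(\text{non-negative terms})=0$, forcing $c_2=0$. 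The degree equation $9=1+2c_2+3\sum_jc_j+6c_6$ then reduces to $3(\sum_jc_j+2c_6)=8$, which has no solution in non-negative integers. The main obstacle is this final case-by-case verification that the $t$-coefficient of $\pi(\psi^2)$ vanishes; all of it amounts to straightforward bookkeeping in $D_5$.
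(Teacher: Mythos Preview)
Your proposal is correct and follows the same overall strategy as the paper---use the quotient $\pi\colon H\to kD_5$ coming from Lemma~\ref{2-1}, decompose $H^{\co\pi}$, and analyze the images under $\pi$ of the degree-$2$ and degree-$3$ characters---but the endgame is organized differently. The paper first proves the fusion rule $\psi\psi^* = 1 + \lambda + \mu$ (your notation; equation~\eqref{dec-chi} in the paper) for \emph{every} degree-$3$ character, and uses it to reduce to the case $k[C]=H$; this extra hypothesis forces $\langle z_1,z_2\rangle = D_5$, hence $z_1\ne z_2,z_2^{-1}$, and then $m(1,\pi(\psi^2))\ge 3$ yields $z_1^2=z_2^2=1$. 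The contradiction is then obtained by locating $t$ among $z_1,z_2,z_1z_2$ and eliminating each possibility. You bypass both the preliminary fusion rule and the $k[C]=H$ reduction: you simply allow all self-dual possibilities for $(z_1,z_2)$ with $z_i\notin\{1,t\}$, verify directly in $kD_5$ that the coefficient of $t$ in $(1+z_1+z_2)^2$ always vanishes, deduce $c_2=0$, and finish with the degree obstruction $8=3(\sum_j c_j+2c_6)$. This is a bit more economical, since it does not require knowing in advance that $\lambda$ actually occurs in $\psi^2$, nor anything about $k[C]$.

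One small point worth making explicit in your write-up: when $\Gamma\simeq\Z_2$ you assert $\pi(a)=t$. This is correct but not automatic, since surjective Hopf algebra maps need not be surjective on group-likes. It follows, for instance, by applying $\pi$ to $\chi^2=1+a+\chi$ under the hypothesis $\pi(a)=1$: one finds $\pi(\chi)=2$, whence $\pi(B)=k$, contradicting $|\Gamma|=2$.
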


\begin{proof}  Let $\chi \in H$ be an irreducible character of degree $3$.
Then there is a decomposition $\chi \chi^* = 1 + n \lambda + \mu$,
where $\lambda \in B$ is the irreducible character of degree $2$,
and $\mu$ is a sum of irreducible characters of degrees $3$ or
$6$. In particular, $n \neq 0$. Moreover, since for $1 \neq a \in
G(H)$ we have $a\lambda = \lambda$,  then $\lambda\chi = \chi + a
\chi$. Therefore, since $a\chi \neq \chi$, we find $n = m(\lambda,
\chi \chi^*) = m(\chi, \lambda\chi) = 1$. Hence,
\begin{equation}\label{dec-chi}\chi \chi^* = 1 + \lambda + \mu.
\end{equation}
This implies that $B \subseteq k[C]$, where $C$ is the simple
subcoalgebra containing $\chi$. In addition, $\dim k[C] \geq \dim
B + \dim C = 15$. Since $6 = \dim B$ divides $\dim k[C]$, then
$\dim k[C] = 30$ or $k[C] = H$. If $\dim k[C] = 30$, then $k[C]$
is normal in $H$ and we are done. Thus we can consider the case
where $k[C] = H$, for all simple subcoalgebras $C \subseteq H$ of
dimension $9$.

\medbreak By Lemma \ref{2-1}, we may assume that there is a
quotient $\pi: H \to kD_5$. The left coideal subalgebra $H^{\co
\pi}$ has dimension $6$. Hence, unless $H^{\co \pi} = B$, in which
case we are done, we may assume that $H^{\co \pi} = k1 \oplus U
\oplus V$ as a left coideal of $H$, where $U$ is an irreducible
coideal of dimension $2$ and $V$ is an irreducible coideal of
dimension $3$. Let $\chi \in H$ be the irreducible character
corresponding to $V$, and $C$ the simple subcoalgebra containing
$\chi$. Since $V$ is the only $3$-dimensional irreducible left
coideal contained in the self-dual left coideal $H^{\co \pi}$, we
have $\chi^* = \chi$.

By Remark \ref{hcopi}, $m(1, \pi(\chi)) = 1$. Also, $m(1,
\pi(\chi^2)) = m(1, \pi(\chi\chi^*)) \geq 3$, by Lemma
\ref{zetazeta*} (i).

Write $\pi(\chi) = 1 + x + y$, where $x, y \in D_5$, $x\neq 1 \neq
y$. Then $\pi(k[C]) \subseteq \langle x, y \rangle$, and therefore
$\langle x, y \rangle = D_5$, because $k[C] = H$. In particular,
$x\neq y, y^{-1}$. Furthermore, $m(1, \pi(\chi^2)) \geq 3$, and
$\pi(\chi^2) = (1 + x + y)^2$, hence $x^2 = y^2 = 1$.

Let $\pi(\lambda) = 1 + t$, with $1\neq t \in D_5$. Since
$a\lambda = \lambda$, then $\pi(a) = t$ and $t^2 = 1$. In view of
the decomposition \eqref{dec-chi}, we have $m(t, \pi(\chi^2)) =
m(t, \pi(\chi\chi^*)) > 0$. Hence, $t = x$, $y$ or $xy$. In the
first two cases, we find that $m(1, \pi(a\chi)) > 0$, which is
impossible since $a\chi \neq \chi$, and $\chi$ is the only
irreducible character of degree $3$ appearing in $H^{\co \pi}$.
Therefore $t = xy$. But, since $x$ and $y$ are reflections in
$D_5$, then the order of $xy$ divides $5$. Thus $t = 1$. This
implies that $a \in H^{\co \pi}$, against our assumption. Hence we
conclude that $H$ is not simple. \end{proof}

\subsection{Type (iii)} Let $k^{D_5} \simeq B \subseteq H$ be its (unique) Hopf subalgebra of
dimension $10$, which has coalgebra type $(1, 2; 2, 2)$.

\begin{proposition}\label{n=5} $H$ is not simple. \end{proposition}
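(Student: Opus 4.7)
The plan is to show that no simple semisimple Hopf algebra $H$ of coalgebra type $(1,2;2,2;5,2)$ exists, by first forcing $H^*$ to also be of type (iii) and then deriving a contradiction from the Hopf algebra quotient $\rho\colon H \to kD_5$ dual to the Hopf subalgebra $k^{D_5} \subseteq H^*$.

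Suppose $H$ is simple. Since $B \simeq k^{D_5}$ is a Hopf subalgebra of $H$ (by Lemma \ref{n=2}), dually $H^*$ has $kD_5$ as a Hopf algebra quotient. Combining Proposition \ref{conteo} with the previous subsections, the only coalgebra types still open for a simple semisimple Hopf algebra of dimension $60$ are (i), (iii), (vi), (vii) and (xi). Types (i), (vii) and (xi) cannot occur for $H^*$, since the corresponding simple examples are $k^{\mathbb A_5}, \A_1, \B, \A_0$, whose duals $k\mathbb A_5, \A_0, \B, \A_1$ have coalgebra types different from (iii). Type (vi) is ruled out by Lemma \ref{quotients}(ii), which forbids a Hopf algebra of type $(1,4;2,2;4,3)$ from having $kD_5$ as a quotient. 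Hence $H^*$ is of type (iii) too, and Lemma \ref{n=2} applied to $H^*$ yields a Hopf subalgebra $B' \simeq k^{D_5}$, dual to a surjective Hopf algebra map $\rho\colon H \to kD_5$ with $\dim H^{\co\rho} = 6$.

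Since the irreducible left coideals of $H$ have dimensions $1$, $2$ or $5$, the decomposition of $H^{\co\rho}$ into simple left coideals is either $H^{\co\rho} = kG(H) \oplus U \oplus U'$ with $U, U'$ two-dimensional simple left coideals (Case I, for which $\rho(a) = 1$) or $H^{\co\rho} = k1 \oplus V$ with $V$ a five-dimensional simple left coideal (Case II, for which $\rho(a)$ is an involution in $D_5$, i.e.\ a reflection). I would then analyze $\rho|_B\colon B \to kD_5$: its image is a Hopf subalgebra $kH_0 \subseteq kD_5$ with $H_0 \leq D_5$, and since $B \cong k^{D_5}$ is commutative, the subgroup $H_0$ must be abelian, so $H_0 \in \{\{1\}, \langle s\rangle, T\}$ where $T \cong \mathbb Z_5$ is the rotation subgroup. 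If $H_0 = \{1\}$ then $\rho|_B = \epsilon_B$, hence $HB^+ \subseteq \ker\rho$; but $\dim HB^+ = 60 - 6 = 54 > 50 = \dim\ker\rho$, a contradiction.

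For the remaining two cases I would compute $\rho(a), \rho(\lambda), \rho(\lambda')$ via the Fourier identification $k^N \cong kN$ for abelian $N \leq D_5$, and then compare with the decomposition of $H^{\co\rho}$ through Remark \ref{hcopi}. The case $H_0 = T$ gives $\rho(a) = 1$ (Case I) and $\rho(\lambda), \rho(\lambda') \in \{t + t^{-1},\, t^2 + t^{-2}\}$, so $m(1,\rho(\lambda)) = m(1,\rho(\lambda')) = 0$; this contradicts the presence of two $2$-dimensional simple left coideals in $H^{\co\rho}$ in Case I. The case $H_0 = \langle s\rangle$ gives $\rho(a) = s$ (Case II) and $\rho(\lambda) = \rho(\lambda') = 1 + s$, so $m(1,\rho(\lambda)) = 1$; this contradicts the absence of $2$-dimensional summands in $H^{\co\rho}$ in Case II. All cases being impossible, $H$ cannot be simple. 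The main technical obstacle is the bookkeeping of the Fourier identifications $k^N \cong kN$ determining $\rho(a)$ and $\rho(\lambda), \rho(\lambda')$; once this is done the multiplicity contradictions via Remark \ref{hcopi} are immediate.
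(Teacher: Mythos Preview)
Your argument is correct, and your reduction of $H^*$ to type (iii) matches the paper's. Where you diverge is in extracting the contradiction from the quotient $\rho\colon H\to kD_5$.

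The paper argues as follows. From the coalgebra type of $H$ one sees that either $H^{\co\rho}\subseteq B$ or $H^{\co\rho}\cap B=k1$ (these are exactly your Cases I and II). In the first case $H^{\co\rho}$ is a left coideal subalgebra of $B$, so by freeness its dimension $6$ would have to divide $\dim B=10$, which is absurd. In the second case $B^{\co\rho}=k1$, so $\rho|_B\colon B\to kD_5$ is injective, hence an isomorphism $k^{D_5}\cong kD_5$, which is impossible since $D_5$ is nonabelian. That is the whole proof.

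Your route instead classifies the image subgroup $H_0=\rho(B)\cap D_5$, uses the Fourier identification $k^N\cong kN$ to compute $\rho(a)$ and $\rho(\lambda)$ explicitly, and then contradicts the left-coideal decomposition of $H^{\co\rho}$ via Remark~\ref{hcopi}. This is sound: your dimension count for $H_0=\{1\}$ is correct, and your observations that $\rho(a)\in H_0$ has order dividing $2$ (forcing $\rho(a)=1$ when $H_0\cong\Z_5$) and that the degree-$2$ characters of $D_5$ restrict without trivial constituents to $T$ but with a trivial constituent to any reflection subgroup, are exactly what is needed. What your approach buys is an explicit picture of $\rho|_B$; what the paper's approach buys is brevity, since the single observation $k^{D_5}\not\cong kD_5$ replaces all of the Fourier bookkeeping in your Cases $H_0=\langle s\rangle$ and $H_0=T$, and the divisibility $6\nmid 10$ handles your Case I in one line.
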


\begin{proof} In view of the previous results, if $H$ is simple, then $H^*$ is of type (iii).
But we shall show that $H$ admits no Hopf algebra quotient $\pi: H
\to \overline H$, with $\overline H \simeq kD_5$. This will imply
the proposition. Suppose on the contrary that such quotient
exists. We have $\dim H^{\co \overline H} = 6$. The coalgebra
structure of $H$ forces $H^{\co \overline H} \subseteq B$ or
$H^{\co \overline H} \cap B = k1$. However, since $6$ does not
divide $\dim B$, then $H^{\co \overline H} \subsetneq B$. Also,
$H^{\co \overline H} \cap B \neq k1$, because otherwise $\pi$
would induce an isomorphism $k^{D_5} \to kD_5$, which is
impossible. \end{proof}

\subsection{Type (vi)}\label{final} Suppose that $H$ is a semisimple
Hopf algebra of dimension $60$ which is simple as a Hopf algebra.
In view of the results in Section \ref{coal-types}, unless $H$ is
isomorphic to $\mathcal A_0$, $\mathcal A_1$ or $\mathcal B$, then
$H$ and $H^*$ are both of type $(1, 4; 2, 2; 4, 3)$ as coalgebras.

We shall show in this subsection, \textit{c. f.} Proposition
\ref{3-casos}, that this cannot occur, that is, such a semisimple
Hopf algebra cannot be simple. This will conclude the proof of
Theorem \ref{main}.

\medbreak Let $B \subseteq H$ be the (unique) Hopf subalgebra of
dimension $12$, which has coalgebra type $(1, 4; 2, 2)$.

\begin{lemma}\label{dim20} Suppose $H$ contains a Hopf subalgebra $K$ of
dimension $20$. Then $G(H)\cap Z(H) \neq 1$. \end{lemma}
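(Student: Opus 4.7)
My plan is to first pin down the coalgebra type of $K$. Since every simple subcoalgebra of $K$ is a simple subcoalgebra of $H$, only the dimensions $1$, $4$, and $16$ occur. Writing the coalgebra type of $K$ as $(1, m; 2, a; 4, b)$ with $m + 4a + 16b = 20$, $m \leq 4 = |G(H)|$, $a \leq 2$, $b \leq 3$, together with the Nichols--Zoeller divisibilities $m \mid 4a$ and $m \mid 16b$, the only solution is $(m, a, b) = (4, 0, 1)$. Hence $K$ has coalgebra type $(1, 4; 4, 1)$; in particular $G(K) = G(H)$, and $K$ contains a unique $16$-dimensional simple subcoalgebra $C$ whose character $\chi$ is self-dual and fixed under $G(H)$ on both sides. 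A direct count then yields $\chi^2 = \sum_{g \in G(H)} g + 3\chi$.

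I would next observe that $B \cap K$ is a Hopf subalgebra whose dimension divides $\gcd(12, 20) = 4$ and which contains $kG(H)$, so $B \cap K = kG(H)$. The standard equality $\dim(BK) \cdot \dim(B \cap K) = \dim B \cdot \dim K$ for Hopf subalgebras of a semisimple Hopf algebra then gives $\dim BK = 60$, whence $BK = H$.

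The core step is to establish $G(H) \subseteq Z(B) \cap Z(K)$. I would invoke the classification of semisimple Hopf algebras of the relevant dimensions: coalgebra type $(1,4;2,2)$ in dimension $12$ forces $B \simeq k^{D_6}$ or $k^{Q_{12}}$, while coalgebra type $(1, 4; 4, 1)$ in dimension $20$ forces $K \simeq k^{F_{20}}$; all three of these Hopf algebras are commutative as algebras. Therefore $G(H) \subseteq Z(B) \cap Z(K)$, and since $H = BK$ is generated as an algebra by $B \cup K$, any element commuting with everything in both $B$ and $K$ is central in $H$. This yields $G(H) \subseteq Z(H)$, and in particular $G(H) \cap Z(H) = G(H) \neq 1$.

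The main obstacle is the centrality step: its cleanest justification relies on the low-dimensional classification of semisimple Hopf algebras. A self-contained alternative would derive the commutativity of $K$ (and of $B$) directly from the constraints on their characters---for instance using the quadratic relation $\chi^2 = \sum_g g + 3\chi$ combined with $g\chi = \chi g = \chi$ to pin down the algebra structure of $K$, and a comparable analysis in $B$---but this route is noticeably more delicate.
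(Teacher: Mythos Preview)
Your overall strategy mirrors the paper's: pin down the coalgebra type of $K$ as $(1,4;4,1)$, observe that $K$ is commutative with $G(H)=G(K)\subseteq K$, and use that $B$ and $K$ together generate $H$ as an algebra. The gap is in your centrality step for $B$. The assertion that coalgebra type $(1,4;2,2)$ in dimension $12$ forces $B\simeq k^{D_6}$ or $k^{Q_{12}}$ is false: Fukuda's classification \cite{fukuda} gives \emph{four} isomorphism classes with this coalgebra type, the two commutative ones you list together with the nontrivial, noncommutative Hopf algebras $A_0$ and $A_1$ (with $G(A_0)\simeq\Z_2\times\Z_2$, $G(A_1)\simeq\Z_4$). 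In fact the paper's subsequent Lemmas~\ref{a_0} and~\ref{a_1} treat precisely the cases $B\simeq A_0$ and $B\simeq A_1$, and both invoke the present lemma; so Lemma~\ref{dim20} must be valid when $B$ is not commutative, and your argument does not cover this.

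The paper's repair is small but essential: instead of claiming all of $G(H)$ is central in $B$, one uses only that $B$ contains \emph{some} nontrivial group-like $g$ central in $B$ (this holds for each of the four dimension-$12$ possibilities). Since $K$ is commutative and $g\in G(H)\subseteq K$, $g$ is central in $K$ as well, hence central in $k[B,K]=H$. Thus the paper obtains only $G(H)\cap Z(H)\neq 1$, not your stronger (and unavailable) conclusion $G(H)\subseteq Z(H)$.

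A minor secondary point: the identity $\dim(BK)\cdot\dim(B\cap K)=\dim B\cdot\dim K$ you quote is not entirely standard for Hopf subalgebras and would merit a reference. The paper bypasses it: by Nichols--Zoeller, $\dim k[B,K]$ is divisible by $\operatorname{lcm}(12,20)=60$, so $k[B,K]=H$ directly.
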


\begin{proof} In view of the coalgebra structure of $H$, $K$ must be
commutative and $G(H) \subseteq K$. Let $1\neq g \in G(H)$ be a
central group-like element of $B$. Such central group-like exists
in view of the classification of semisimple Hopf algebras of
dimension $20$ \cite{pqr}. Since $G(H) \subseteq K$, then $g$ is
central in $K$ and therefore $g$ is central in $k[B, K]$. On the
other hand, $k[B, K] = H$, by dimension. Hence $G(H)\cap Z(H) \neq
1$, as claimed. \end{proof}

\begin{lemma}\label{bip} Assume that $H$ is simple. Then $H$ is a
biproduct $H \simeq R \# B$, where $R$ is a Yetter-Drinfeld Hopf
algebra over $B$ of dimension $5$. \end{lemma}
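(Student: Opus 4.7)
The plan is to construct a Hopf algebra projection $H \to B$ splitting the inclusion $B \hookrightarrow H$; once this is done, the Radford--Majid characterization of biproducts recalled in Section \ref{a4-dn} yields $H \simeq R \# B$ with $R := H^{\co \pi}$, automatically of dimension $60/12 = 5$ and carrying its natural Yetter--Drinfeld Hopf algebra structure over $B$. To produce the projection, use that $H^*$ has the same coalgebra type $(1, 4; 2, 2; 4, 3)$ as $H$, so by Proposition \ref{conteo}(vi) applied to $H^*$, its simple subcoalgebras of dimensions $1$ and $4$ form a Hopf subalgebra $\widetilde B \subseteq H^*$ of dimension $12$. Dualizing the inclusion $\widetilde B \hookrightarrow H^*$ produces a surjective Hopf algebra map $\pi: H \to B'$ where $B' := \widetilde B^*$ has $\dim B' = 12$, and set $R := H^{\co \pi}$, a left coideal subalgebra of $H$ of dimension $5$.

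The first main step is to determine $R$ as a left $H$-comodule. Irreducible left coideals of $H$ have dimensions in $\{1, 2, 4\}$, so the four-dimensional complement of $k1$ in $R$ partitions as $4$, $2+2$, $2+1+1$, or $1+1+1+1$. The partitions $2+1+1$ and $1+1+1+1$ force $|G(H) \cap R|$ to equal $3$ or $5$ respectively, contradicting $|G(H) \cap R| \mid |G(H)| = 4$ by \cite{NZ}. The case $R = k1 \oplus V_1 \oplus V_2$ with $\dim V_i = 2$ is ruled out as follows: every two-dimensional simple $H$-comodule sits in a four-dimensional simple subcoalgebra of $H$, and since $B$ already contains two such subcoalgebras while $H$ contains exactly two, every $2$-dimensional irreducible left coideal of $H$ is already contained in $B$. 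Hence $V_1, V_2 \subseteq B$, so $R \subseteq B$; this makes $R$ a left coideal subalgebra of $B$, and Skryabin's freeness theorem \cite{skryabin} then forces $\dim R = 5$ to divide $\dim B = 12$, a contradiction. Therefore $R = k1 \oplus V$ with $V$ a four-dimensional irreducible left coideal of $H$.

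The second main step is to show $B \cap R = k1$. Since $\Delta(B) \subseteq B \otimes B$ and $\Delta(R) \subseteq H \otimes R$, one has $\Delta(B \cap R) \subseteq B \otimes (B \cap R)$, so $B \cap R$ is a left $H$-subcomodule of $R$. The $H$-subcomodules of $R = k1 \oplus V$ that contain $k1$ are only $k1$ and $R$ itself; the possibility $B \cap R = R$ gives $R \subseteq B$ and the same Skryabin contradiction, so $B \cap R = k1$. Equivalently, $B^{\co \pi\vert_B} = k1$, and the Nichols--Zoeller equality $\dim B = \dim B^{\co \pi\vert_B} \cdot \dim \pi(B)$ applied to the Hopf algebra map $\pi\vert_B: B \to B'$ yields $\dim \pi(B) = 12 = \dim B'$, so $\pi\vert_B$ is an isomorphism. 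Composing $\pi$ with $(\pi\vert_B)^{-1}$ then produces the Hopf algebra projection $H \to B$ splitting the inclusion, and the Radford--Majid biproduct theorem delivers $H \simeq R \# B$. The principal technical point is the dimension analysis of $R$; in particular, the observation that any $2$-dimensional irreducible left coideal of $H$ must lie in $B$ is what enables the Skryabin divisibility argument to close the case $R \subseteq B$.
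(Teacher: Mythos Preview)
Your proof is correct and follows the same strategy as the paper: dualize the dimension-$12$ Hopf subalgebra of $H^*$ to obtain a quotient $\pi: H \to B'$, then show $B \cap H^{\co \pi} = k1$ so that $\pi\vert_B$ is an isomorphism and Radford--Majid applies. The only difference is that the paper asserts $B \cap H^{\co \pi} = k1$ in one line from $\dim H^{\co \pi} = 5$, while you spell this step out via the explicit comodule decomposition of $H^{\co \pi}$; your added detail is a legitimate expansion of the paper's terse justification.
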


\begin{proof} Since $H^*$ is also of type $(1, 4; 2, 2; 4, 3)$, then
there is a Hopf algebra quotient $q: H \to B'$, where $B'$ is a
Hopf algebra of dimension $12$, such that $(B')^* \subseteq H^*$
is of coalgebra type $(1, 4; 2, 2)$. Since $\dim H^{\co q} = 5$,
then $\dim H^{\co q}  \cap B = k1$. Thus $q\vert_B: B \to B'$ is
an isomorphism, and $H$ is a biproduct, as claimed. \end{proof}

\begin{proposition}\label{3-casos} $H$ is not simple. \end{proposition}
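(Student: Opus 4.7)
The plan is to assume $H$ is simple and derive a contradiction. By Lemma \ref{bip}, we may write $H \simeq R \# B$ with $R$ a $5$-dimensional Yetter-Drinfeld Hopf algebra over $B$. Since $B$ is a semisimple Hopf algebra of dimension $12$ with coalgebra type $(1,4;2,2)$, the classification of semisimple Hopf algebras of dimension $12$ forces $B \simeq k^G$ for a finite group $G$ of order $12$ whose irreducible representations have dimensions $1,1,1,1,2,2$; the only such groups are the dihedral group $D_6$ and the dicyclic group $Q_{12}$. Dualizing, $H^* \simeq R^* \# kG$, and $R^*$ inherits a $G$-grading $R^* = \bigoplus_{g\in G} R^*_g$ in which $\dim R^*_g$ depends only on the conjugacy class of $g$, by the compatibility \eqref{accion-coaccion}. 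In particular, $\Supp(R^*)$ is a union of $G$-conjugacy classes.

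Both $D_6$ and $Q_{12}$ have conjugacy class sizes $1,1,2,2,3,3$; writing $d_i$ for the common dimension on class $i$, the relation $d_1+d_2+2d_3+2d_4+3d_5+3d_6 = 5$ together with $d_1 \geq 1$ forces $d_5 + d_6 \leq 1$. Except for the ``mixed'' configuration where the central involution $z$ and a full size-$3$ conjugacy class $c$ both meet $\Supp(R^*)$, one checks directly that $\Supp(R^*)$ is contained in a proper subgroup $\Gamma \subseteq G$. I plan to rule out the mixed configuration by a semisimplicity argument in the spirit of the proofs of Propositions \ref{braided-dih} and \ref{braided-a4}: the graded pieces $R^*_g$ for $g\in\Supp(R^*)$ are all one-dimensional, and the products $u_g u_h$ for distinct $g,h\in\Supp(R^*)$ land in graded components $R^*_{gh}$ that lie outside the support, hence vanish; the algebra $R^*$ thus becomes commutative with generators $u_g$ satisfying $u_g u_h = 0$ for $g\neq h$ and $u_g^2$ a scalar multiple of either $1$ or $u_z$. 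A direct enumeration of algebra homomorphisms $R^*\to k$ is then inconsistent with $\dim R^* = 5$, contradicting semisimplicity of $R^*$.

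Once the mixed case is excluded, $\Supp(R^*)\subseteq \Gamma$ for some proper subgroup $\Gamma\subseteq G$, and by \cite[Lemma 4.3.1]{ssld} one obtains a Hopf subalgebra $R^*\#k\Gamma \subseteq H^*$ of dimension $5|\Gamma|$. When $\Gamma$ can be chosen to be a Klein four-subgroup $\Z_2\times\Z_2\subseteq D_6$ containing the center, or a cyclic $\Z_4\subseteq Q_{12}$, the resulting Hopf subalgebra has dimension $20$. Since $H^*$ has the same coalgebra type $(1,4;2,2;4,3)$ as $H$, Lemma \ref{dim20} applies to $H^*$ and produces an element of $G(H^*)\cap Z(H^*)$ distinct from $1$, generating a proper normal Hopf subalgebra of $H^*$ and contradicting simplicity of $H$. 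In the remaining configurations, $\Gamma$ is an index-$2$ subgroup ($\Z_6$ or a copy of $D_3\subseteq D_6$), and the resulting dimension-$30$ Hopf subalgebra of $H^*$ is to be shown normal by standard index-$2$ arguments in the cosemisimple setting, again contradicting simplicity.

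The main obstacle will be the semisimplicity argument excluding the mixed-support configuration, together with the case analysis verifying that in every admissible grading configuration a subgroup $\Gamma$ of order $4$ or of index $2$ can be extracted containing $\Supp(R^*)$; the normality of the dimension-$30$ subalgebra in the index-$2$ case must also be verified carefully in both $D_6$ and $Q_{12}$.
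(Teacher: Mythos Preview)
Your argument rests on the claim that the classification of semisimple Hopf algebras of dimension $12$ forces $B \simeq k^G$ for a group $G$. This is false: Fukuda's classification \cite{fukuda} includes, besides the trivial examples, two Hopf algebras $A_0$ and $A_1$ which are neither commutative nor cocommutative, with $G(A_0) \simeq \Z_2\times\Z_2$ and $G(A_1) \simeq \Z_4$; both have coalgebra type $(1,4;2,2)$. So the coalgebra type of $B$ alone does not make $B$ a dual group algebra, and your whole grading mechanism over a group $G$ is unavailable in the cases $B\simeq A_0$ or $B\simeq A_1$.

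Worse, those are exactly the cases that matter. If $B$ \emph{were} commutative, the conclusion is immediate and your entire support analysis is unnecessary: $B^* \simeq kG$ would then sit inside $H^*$ as a cocommutative Hopf subalgebra of dimension $12$, forcing $|G(H^*)| \geq 12$, contradicting the coalgebra type $(1,4;2,2;4,3)$ of $H^*$. This is precisely Lemma~\ref{b-conm}. The substance of the proposition lies in the two non-commutative possibilities $B\simeq A_0$ and $B\simeq A_1$, which the paper handles separately in Lemmas~\ref{a_0} and~\ref{a_1} via a twisting argument (for $A_0$) and a detailed character-theoretic analysis of the biproduct (for $A_1$). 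Your proposal does not touch either of these, so as written it does not prove the proposition.
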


\begin{proof} The proof will follow from Lemmas \ref{b-conm}, \ref{a_0} and \ref{a_1} below.
\end{proof}

\begin{lemma}\label{b-conm} Suppose $B$ is commutative. Then $H$ is not simple. \end{lemma}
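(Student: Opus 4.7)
The plan is to invoke Lemma \ref{bip} and analyse the resulting biproduct. Assume $H$ is simple; by Lemma \ref{bip}, write $H \simeq R\# B$ with $\dim R = 5$. Since $B$ is commutative with coalgebra type $(1,4;2,2)$, $B \simeq k^G$ for a non-abelian group $G$ of order $12$ whose abelianisation has order $4$; this forces $G\simeq D_6$ or $G\simeq Q_{12}$ (the dicyclic group of order $12$). Dualising, $H^* \simeq R^*\# kG$, where $R^*$ is a semisimple braided Hopf algebra in ${}^{kG}_{kG}\mathcal{YD}$ of dimension $5$, with $G$-grading $R^* = \bigoplus_{g\in G} R^*_g$.

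The central dichotomy is as follows. Since $\Supp(R^*)$ is conjugation-stable by \eqref{accion-coaccion}, the subgroup $N := \langle \Supp(R^*)\rangle$ is normal in $G$. If $N$ is a proper subgroup of $G$, then by \cite[Lemma 4.3.1]{ssld} the subalgebra $R^*\# kN$ is a Hopf subalgebra of $H^*$, and by exactly the reasoning in the last paragraph of the proof of Proposition \ref{braided-a4}---normality of $N$ in $G$ together with the stability of $R^*$ under the adjoint action---it is normal in $H^*$. This would contradict the simplicity of $H$, so we may assume $N=G$ in what follows.

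When $N=G$, the conjugacy class sizes $1,1,2,2,3,3$ of both candidate groups, combined with $\sum_{g\in \Supp(R^*)} \dim R^*_g = 5$ and the constancy of $\dim R^*_g$ along conjugacy classes, force $\Supp(R^*)\supseteq \mathcal{O}$ for some size-$3$ conjugacy class $\mathcal{O}$ (a class of reflections if $G=D_6$, a class of order-$4$ elements if $G=Q_{12}$), with $\dim R^*_x = 1$ for every $x\in\mathcal{O}$. The main technical point---to be verified by explicit computation inside each group---is that for any two distinct $x,y\in\mathcal{O}$ the product $xy$ lies outside $\Supp(R^*)$ (it falls into one of the size-$2$ conjugacy classes, which the enumeration excludes from the support), and similarly $xz\notin \Supp(R^*)$ whenever $z\in Z(G)\cap\Supp(R^*)\setminus\{1\}$.

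Choose $u_x\in R^*_x$ for $x\in\mathcal{O}$ and, when the nontrivial central $z$ lies in $\Supp(R^*)$, a generator $u_z\in R^*_z$. The previous step gives $u_xu_y = 0$ for distinct $x,y\in\mathcal{O}$, and $u_xu_z = u_zu_x = 0$ when applicable. Since $x^2\in Z(G)$ for $x\in\mathcal{O}$, the element $u_x^2$ either vanishes (when $x^2\notin\Supp(R^*)$) or lies in a one-dimensional space spanned by $1$ or by $u_z$. Arguing exactly as in the final step of the proof of Proposition \ref{braided-dih}: if $u_x^2 = \alpha\cdot 1$ with $\alpha\neq 0$, then for any $y\in\mathcal{O}\setminus\{x\}$ we get $u_y = \alpha^{-1}u_x(u_xu_y) = 0$, contradicting $u_y\neq 0$; if $u_x^2 = \alpha u_z$ with $\alpha\neq 0$, then $u_x^3 = u_x u_x^2 \in k\,u_xu_z = 0$, making $u_x$ a nonzero nilpotent; and if $u_x^2 = 0$, then $u_x$ is itself a nonzero nilpotent. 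In every case $R^*$ contains a nonzero nilpotent element, contradicting its semisimplicity, and hence $H$ cannot be simple.
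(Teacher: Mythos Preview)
Your argument is correct, but it takes a far longer route than necessary, and in fact you set up the paper's one-line proof without noticing it. Once you write $H^* \simeq R^* \# kG$ with $|G| = 12$, the inclusion $1 \# kG \hookrightarrow H^*$ exhibits $kG$ as a Hopf subalgebra of $H^*$, so $G \subseteq G(H^*)$. But in the ambient setting of this subsection both $H$ and $H^*$ have coalgebra type $(1,4;2,2;4,3)$, whence $|G(H^*)| = 4$; this is already the desired contradiction. The paper's proof is exactly this observation, phrased slightly differently: the biproduct projection $H \to B$ dualizes to an embedding $B^* \hookrightarrow H^*$, and if $B$ is commutative then $B^*$ is a cocommutative (hence group-) Hopf algebra of dimension $12$, which cannot sit inside $H^*$.

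Your support analysis of $R^*$ over $D_6$ and $Q_{12}$ is a correct and self-contained argument in the spirit of Propositions~\ref{braided-dih} and~\ref{braided-a4}, and it does yield the result. What it buys is an independent structural statement (there is no $5$-dimensional semisimple Yetter--Drinfeld Hopf algebra over $D_6$ or $Q_{12}$ whose support generates the whole group) that does not rely on knowing the coalgebra type of $H^*$. But for the lemma at hand this extra work is unnecessary: the coalgebra type of $H^*$ is part of the standing hypotheses of the subsection, and it kills the commutative case immediately.
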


\begin{proof} By Lemma \ref{bip}, $H^*$ and $H$ have the same coalgebra type and $H \simeq R \# B$ is a
biproduct, where $R$ is a Yetter-Drinfeld Hopf algebra over $B$ of
dimension $5$. If $B$ where commutative, then $B^* \subseteq H^*$
would be a cocommutative Hopf subalgebra of dimension $12$, which
is not possible. \end{proof}

Combining Lemma \ref{b-conm} with the classification of semisimple
Hopf algebras of dimension $12$ \cite{fukuda}, we may assume that
$B \simeq A_0$ or $A_1$, where $A_0$ and $A_1$ are the nontrivial
semisimple Hopf algebras of dimension $12$ such that $G(A_0)
\simeq \Z_2 \times \Z_2$ and $G(A_1) \simeq \Z_4$. See
\cite[5.2]{ssld}.

\begin{lemma}\label{a_0} Suppose $B \simeq A_0$. Then $H$ is not
simple. \end{lemma}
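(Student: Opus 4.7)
The plan is to use the biproduct $H\simeq R\#A_0$ of Lemma~\ref{bip} and show that the $A_0$-coaction $\rho:R\to A_0\otimes R$ must be trivial, whence $R$ is a normal Hopf subalgebra of $H$ of dimension~$5$. First, I determine the coalgebra type of $R$: grouplikes of $H$ in a biproduct are of the form $h\#a$ with $h$ an $A_0$-coinvariant grouplike of $R$ and $a\in G(A_0)$, so the equality $|G(H)|=|G(A_0)|=4$ forces $R$ to have no nontrivial $A_0$-coinvariant grouplikes. Since $\rho$ sends each grouplike $h\in G(R)$ to $\tau_h\otimes h$ for some $\tau_h\in G(A_0)$, the assignment $h\mapsto\tau_h$ is injective, giving $|G(R)|\leq 4$. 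Combined with $\dim R=5$ and the fact that simple subcoalgebras have square dimension, the only coalgebra type is $R\simeq k\oplus V$ with $V$ the unique $4$-dimensional simple subcoalgebra.

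Next, since $\rho$ is a coalgebra map and $V$ is simple of dimension $4$ with $\rho|_V$ injective (by the counit axiom), $\rho(V)$ is a $4$-dimensional simple subcoalgebra of $A_0\otimes R$, necessarily a tensor product $C\otimes D$ with $\dim C\cdot\dim D=4$. Since simple subcoalgebras of $A_0$ (of coalgebra type $(1,4;2,2)$) and of $R$ have dimension $1$ or $4$, the options are $(\dim C,\dim D)\in\{(1,4),(4,1)\}$; the case $(4,1)$ is ruled out since it would force $V\subseteq k\cdot 1_R$. Hence $\rho(V)=kg\otimes V$ for some $g\in G(A_0)\simeq\Z_2\times\Z_2$, and $\rho(v)=g\otimes v$ for all $v\in V$.

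The crux is ruling out the case $g\neq 1$. Using the biproduct formulas~\eqref{mca} to compute $\Delta_H(v_1 v_2\#1)=\Delta_H(v_1\#1)\Delta_H(v_2\#1)$ and projecting onto the $A_0$-coordinate, one obtains $\rho(v_1 v_2)=g^2\otimes v_1 v_2=1\otimes v_1 v_2$, since $g^2=1$ in $\Z_2\times\Z_2$. Hence $V\cdot V\subseteq R^{\co A_0}$. If $g\neq 1$, then $V$ is not $A_0$-coinvariant, so $R^{\co A_0}=k\cdot 1_R$, forcing $V\cdot V\subseteq k\cdot 1_R$. Writing $vw=\beta(v,w)\cdot 1_R$ for a bilinear form $\beta:V\otimes V\to k$, associativity in $R$ yields $\beta(u,v)w=\beta(v,w)u$ for all $u,v,w\in V$; a rank argument (if $\beta(u_0,v)\neq 0$ for some $u_0,v$, then every $w\in V$ lies in $ku_0$, contradicting $\dim V=4$) forces $\beta=0$, hence $V\cdot V=0$. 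Then $V$ is a nilpotent two-sided ideal of $R$, contradicting the semisimplicity of~$R$.

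Therefore $g=1$, the $A_0$-coaction on $R$ is trivial, and the biproduct formula for $\Delta_H$ shows $R$ is a subcoalgebra of $H$, hence a Hopf subalgebra of dimension~$5$. Normality holds because the $A_0$-adjoint action on $R\#1\subseteq H$ coincides with the Yetter--Drinfeld action (which preserves $R$), the $R$-adjoint action preserves $R$ (since $R$ is now a Hopf subalgebra), and these together generate the full $H$-adjoint action. As $1<\dim R<\dim H$, we conclude $R$ is a proper nontrivial normal Hopf subalgebra and $H$ is not simple. The main obstacle is in the third paragraph: extracting $V\cdot V=0$ from the associativity constraint and the biproduct coaction computation, thereby obstructing the semisimplicity of~$R$.
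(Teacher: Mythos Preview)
Your argument rests on the assertion that the coaction $\rho:R\to A_0\otimes R$ is a coalgebra map (with the tensor product coalgebra structure on $A_0\otimes R$). This is not what the Yetter--Drinfeld axioms give you. The correct compatibility is that $R$ is an $A_0$-\emph{comodule coalgebra}: the comultiplication $\Delta_R:R\to R\otimes R$ is a morphism of $A_0$-comodules, i.e.\ $r_{-1}\otimes (r_0)^{(1)}\otimes (r_0)^{(2)}=r^{(1)}_{-1}r^{(2)}_{-1}\otimes r^{(1)}_0\otimes r^{(2)}_0$. This is a different identity from $(\rho\otimes\rho)\circ\Delta_R=\Delta_{A_0\otimes R}\circ\rho$, and in general the two are inequivalent. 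In fact, applying the comodule coalgebra condition to a grouplike $h$ with $\rho(h)=\tau_h\otimes h$ would force $\tau_h^2=\tau_h$, hence $\tau_h=1$: so your ``injectivity'' argument cannot work as stated, and the form $\rho(h)=\tau_h\otimes h$ itself is unjustified.

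The same gap breaks your second paragraph: there is no reason for $\rho(V)$ to be a simple subcoalgebra of $A_0\otimes R$, and even if one knew that $V$ were an $A_0$-subcomodule, nothing prevents $V$ from containing simple $A_0$-comodules of dimension~$2$ (recall $A_0$ has coalgebra type $(1,4;2,2)$), in which case $\rho(v)=g\otimes v$ fails. Your later steps (the $V\cdot V=0$ argument and the normality conclusion) are fine on their own, but they hinge entirely on the unproved claim that the coaction on $V$ is by a single grouplike. Note too that if your conclusion held, the resulting $5$-dimensional Hopf subalgebra $R\simeq k\Z_5$ would force $\Z_5\subseteq G(H)$, contradicting $|G(H)|=4$; so you would actually be proving that no such $H$ exists, which is stronger than what is needed and a warning sign that something has gone wrong upstream.

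The paper's proof is entirely different: it exploits that $A_0$ is a cocycle twist of the group algebra $kG$ with $G=\Z_3\rtimes(\Z_2\times\Z_2)$, twists $H$ to $H^J$ so that $B^J\simeq kG$ becomes cocommutative, and then analyzes the possible coalgebra types of $H^J$ (namely $(1,12;2,12)$ or $(1,12;4,3)$) using the earlier classification results in the paper.
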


\begin{proof} By \cite[Proposition 5.2.1]{ssld}, $B \simeq A_0$ is a twisting of the group
$G = \Z_3 \rtimes (\Z_2 \times \Z_2)$ corresponding to the action
by group automorphisms of $\Z_2 \times \Z_2$ on $\Z_3$ defined by
$s.a = a^2$ and $t . a = a^2$, where $\Z_3 = \langle a|\, a^3 = 1
\rangle$, $\Z_2 \times \Z_2 = \langle s, t|\, s^2 = t^2 = 1
\rangle$. Therefore, there exists an invertible twist $J \in B
\otimes B$ such that $B^J \simeq kG$.

Consider the twisting $H^J$ of $H$. Since $B^J \subseteq H^J$ is a
Hopf subalgebra, then $G$ is isomorphic to a subgroup of $G(H^J)$
and, in particular, $|G(H^J)|$ is divisible by $12$. Moreover, we
may assume that $|G(H^J)| = 12$, and therefore $G(H^J) \simeq G$.
Otherwise $H^J$ would be cocommutative, hence a group algebra,
implying that $H$ is not simple by \cite[Theorem 4.10]{gn}.

By Proposition \ref{conteo}, $H^J$ is of type $(1, 12; 2, 12)$ or
$(1, 12; 4, 3)$ as a coalgebra. Since $G$ is not isomorphic to
$\mathbb A_4$, then $H^J$ cannot be isomorphic to the simple Hopf
algebra $\mathcal A_0$. Therefore, by Propositions \ref{deg2} and
\ref{a0}, $H^J$ is not simple.

\medbreak Consider first the possibility $(1, 12; 4, 3)$ for the
coalgebra type of $H^J$. Let $K \subseteq H^J$ be a proper normal
Hopf subalgebra. Then $K \subseteq kG(H^J)$. Otherwise, $\dim K =
20$ and $\dim H^J/H^JK^+ = 3$. Hence $(H^J)^*$ contains a
group-like element of order $3$, which is impossible since $H^*
\simeq H$ as coalgebras.

Thus $K = k\Gamma$ for a normal subgroup $\Gamma$ of $G(H^J) = G$,
and $|\Gamma|$ is either $12$, $6$, $3$, or $2$. If $|\Gamma| =
12$, then $k\Gamma = kG$ and thus $J^{-1} \in k\Gamma \otimes
k\Gamma$. Hence $B \simeq A_0 \simeq (kG)^{J^{-1}}$ is a normal
Hopf subalgebra of $H$ \cite[Lemma 5.4.1]{ssld}.

If $|\Gamma| = 6$, there is a (not necessarily normal) quotient
Hopf algebra $\pi: H \to \overline H$ with $\dim \overline H =
10$, implying that $H^*$ contains a Hopf subalgebra of dimension
$10$. This is impossible because of the coalgebra type of $H^*$.

Similarly, if $|\Gamma| = 3$, then there is a Hopf subalgebra $A
\subseteq H^*$ with $\dim A = 20$. By Lemma \ref{dim20}, $H^*$ and
thus also $H$, are not simple.

\medbreak Consider next the type $(1, 12; 2, 12)$. By
\cite[Corollary 6.3]{BN}, either $H^J$, and thus also $H$, has a
nontrivial central group-like element, or there is a cocentral
exact sequence $k \to K \to H^J \to kU(\mathcal C)$, where
$U(\mathcal C)$ is the universal grading group of the category
$H^J$-comod of finite dimensional $H^J$-comodules, and $K =
(H^J)_{\rm{coad}} \subsetneq H^J$.

Since $(H^J)^* = H^*$ as coalgebras, then $\widehat{U(\mathcal C)}
= G(k^{U(\mathcal C)})$ is of order $2$ or $4$. Therefore
$(H^J)^*$ has a central group-like element of order $2$. Hence,
there is a normal Hopf subalgebra $L\subseteq H^J$ with $\dim L =
30$. By \cite[Theorem 2]{ssld}, $L$ is necessarily commutative.
Since $H^J$ is a bicrossed product $H^J \simeq L
{}^{\tau}\#_{\sigma}k\Z_2$,  in view of the description of the
irreducible modules in the proof of \cite[Theorem 2.1]{MoW},
$(H^J)^*$ must be of type $(1, n; 2, m)$ as a coalgebra, which is
a contradiction. This discards the type $(1, 12; 2, 12)$ for the
coalgebra structure of $H^J$ and finishes the proof of the lemma.
\end{proof}

\begin{lemma}\label{a_1} Suppose $B \simeq A_1$. Then $H$ is not
simple. \end{lemma}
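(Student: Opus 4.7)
The strategy mirrors Lemma~\ref{a_0}. By the classification of semisimple Hopf algebras of dimension~$12$ in \cite{fukuda} together with the explicit descriptions in \cite[Section~5.2]{ssld}, $A_1$ is a twisting of a group algebra: there exist a group $\widetilde G$ of order $12$ with $\widetilde G \not\simeq \mathbb A_4$, and an invertible twist $J \in A_1 \otimes A_1$ lifted from a nondegenerate $2$-cocycle on a subgroup of $\widetilde G$, such that $A_1^J \simeq k\widetilde G$ as Hopf algebras.

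Viewing $J \in B \otimes B \subseteq H \otimes H$ via the inclusion $B \simeq A_1 \subseteq H$, one forms the twisting $\widetilde H := H^J$, which is a semisimple Hopf algebra of dimension $60$. Since $B^J \simeq k\widetilde G$ is a Hopf subalgebra of $\widetilde H$, the group $\widetilde G$ embeds into $G(\widetilde H)$, and in particular $12 \mid |G(\widetilde H)|$. By \cite[Theorem~4.10]{gn}, $H$ is simple if and only if $\widetilde H$ is simple. Combining $12 \mid |G(\widetilde H)| \mid 60$ with Proposition~\ref{conteo}, either $|G(\widetilde H)| = 60$, in which case $\widetilde H$ is cocommutative and $H$ is not simple, or $|G(\widetilde H)| = 12$ and the coalgebra type of $\widetilde H$ is $(1,12;2,12)$ or $(1,12;4,3)$.

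The type $(1,12;4,3)$ is ruled out by Proposition~\ref{a0}: simplicity of $\widetilde H$ in this type would force $\widetilde H \simeq \A_0$ and hence $G(\widetilde H) \simeq \mathbb A_4$, contradicting $\widetilde G \not\simeq \mathbb A_4$. The type $(1,12;2,12)$ is dismissed by repeating the final paragraph of the proof of Lemma~\ref{a_0}: via \cite[Corollary~6.3]{BN} and the description of irreducible modules of a bicrossed product from \cite{MoW}, $\widetilde H^*$ would be forced to have coalgebra type $(1,n;2,m)$, contradicting the fact that $\widetilde H^* \simeq H^*$ has coalgebra type $(1,4;2,2;4,3)$. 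The main obstacle is pinning down the explicit twist presentation $A_1 \simeq (k\widetilde G)^{J^{-1}}$ and ensuring that $\widetilde G \not\simeq \mathbb A_4$; this rests on the classification of dimension-$12$ semisimple Hopf algebras, together with the observation that $G(A_1) \simeq \Z_4$ being an algebra subalgebra of $A_1 \simeq k\widetilde G$ forces $\widetilde G$ to contain an element of order $4$, which $\mathbb A_4$ does not.
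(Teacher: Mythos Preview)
Your argument hinges on the claim that $A_1$ is a twisting of a group algebra $k\widetilde G$, and this is false. If $A_1 \simeq (k\widetilde G)^J$ for some group $\widetilde G$ of order $12$, then $\Rep A_1 \simeq \Rep \widetilde G$ as tensor categories, so the group $G(A_1^*)$ of one-dimensional $A_1$-modules coincides with the group of one-dimensional representations of $\widetilde G$. Now from $A_0 \simeq (kG)^J$ with $G = \Z_3 \rtimes (\Z_2\times\Z_2)$ as in Lemma~\ref{a_0} one gets $G(A_0^*) \simeq \Z_2\times\Z_2 = G(A_0)$; since $A_0^*$ is again a nontrivial semisimple Hopf algebra of dimension $12$ with group-likes $\Z_2\times\Z_2$, it follows that $A_0^* \simeq A_0$, and hence $A_1$ is self-dual as well, giving $G(A_1^*) = G(A_1) \simeq \Z_4$. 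The only group of order $12$ whose one-dimensional characters form $\Z_4$ is the dicyclic group $Q_{12}$, but $Q_{12}$ has a unique involution and therefore no subgroup isomorphic to $\Z_2\times\Z_2$, hence no nontrivial Drinfeld twist. Thus $A_1$ cannot be a nontrivial twist of any group algebra, and your strategy cannot get off the ground. (Your side remark that the algebra inclusion $k\Z_4 \hookrightarrow A_1 \cong k\widetilde G$ forces an element of order $4$ in $\widetilde G$ is separately wrong: as a bare algebra $k\Z_4 \simeq k^4$, which carries no information about element orders in $\widetilde G$.)

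This is exactly why the paper's proof of this lemma looks nothing like that of Lemma~\ref{a_0}: the twisting shortcut is simply unavailable for $A_1$. Instead the paper works directly with the biproduct $H = R \# A_1$ supplied by Lemma~\ref{bip}, establishes the character relation $\psi\psi^* = \sum_{g\in G(H)} g + \zeta + \zeta' + \psi$ for the $G(H)$-stable irreducible degree-$4$ character $\psi$, passes to a second biproduct $H = \tilde R \# kG(H)$ via the projection onto $k^{G(H^*)}$, and then exploits the cyclicity of $G(H) \simeq \Z_4$ together with results from \cite{pqq2} on Yetter--Drinfeld Hopf algebras over cyclic groups to reach a contradiction.
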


\begin{proof} We may assume that $H$ is a biproduct $H = R \#
A_1$. As a left coideal of $H$, we must have a decomposition $R =
k1 \oplus V$, where $V$ is an irreducible left coideal of
dimension $4$. Let $\zeta \in H$ be the character of $V$. Then
$\zeta = \zeta^*$.

\medbreak Suppose that $G(H)$ stabilizes $\zeta$. Then $(V \# 1)
(1 \# g) = Vg \simeq V$, for all $g \in G(H)$. In particular,
$V\#kG(H) \subseteq C$, where $C$ is the simple subcoalgebra
containing $V$. Hence, by dimension, $V\#kG(H) = C$ is a
subcoalgebra of $H$. This implies that the subalgebra $K =
R\#kG(H) \subseteq H$ is also a subcoalgebra, hence a Hopf
subalgebra of $H$. By Lemma \ref{dim20}, $H$ is not simple in this
case.

\medbreak Therefore we may assume that $\zeta$ is not stable under
left multiplication by $G(H)$. Note that $H$ does contain a unique
irreducible character $\psi$ of degree $4$ which is stable under
left multiplication by $G(H)$. Let $G(H)\zeta = \{ \zeta, \zeta'
\}$.

\begin{claim} We have
\begin{flalign}\label{rel2}&g\psi g^{-1} = \psi, \text{ for all } g \in G(H),&  \\
\label{rel1}&\psi\psi^* = \sum_{g \in G(H)}g + \zeta + \zeta' +
\psi.&
\end{flalign} \end{claim}

\begin{proof}[Proof of the claim] We have $|G[g\psi g^{-1}]| = 4$, for all $g \in G(H)$. Then
$g\psi g^{-1} = \psi$, for all $g \in G(H)$, since this is the
only stable irreducible character of degree $4$. This proves
\eqref{rel2}.

On the other hand, we have $\psi\psi^* = \sum_{g \in G(H)}g +
n\lambda + n'\lambda' + m\zeta + m'\zeta' + r\psi$, where $\lambda
\neq \lambda'$ are the irreducible characters of degree $2$ and
$n, n', m, m', r$ are nonnegative integers. Since $\psi$ is stable
and $G(H)\lambda = \{ \lambda, \lambda'\}$, we find that $n = n'$.
Suppose $n \neq 0$. Then $m(\psi, \lambda\psi) = m(\lambda,
\psi\psi^*) = n \neq 0$. Hence $\lambda \psi = \psi + \rho$, where
$\rho$ is irreducible of degree $4$; indeed, $\lambda\psi$ cannot
have irreducible summands of degrees $1$ or $2$, since otherwise
$\psi \in B$, which is impossible; \textit{c. f.} Lemma
\ref{psiinB}.

It follows from \eqref{rel2} that $\psi$ is stable also under
right multiplication by $g \in G(H)$, and it is, moreover, the
only irreducible character with this property. Then $\rho$ is
stable under right multiplication by $G(H)$, implying that $\rho =
\psi$. Then $n = 2$ and we have a relation $\lambda \psi = 2
\psi$. Let $A \subseteq B$ be the smallest Hopf subalgebra of $H$
containing $\lambda$; then $\dim A = 6$ or $12$. By the above,
$AC_{\psi} = C_{\psi}$, where $C_{\psi}$ is the simple
subcoalgebra containing $\psi$. This contradicts \cite{NZ}, since
$\dim C_{\psi} = 16$ is not divisible by $\dim A$.

Therefore $n = n' = 0$, and $\psi\psi^* = \sum_{g \in G(H)}g +
m\zeta + m'\zeta' + r\psi$. Since $\psi$ is stable, we have $m =
m'$. Moreover, $m+m' \neq 0$, because otherwise $\psi$ and $G(H)$
would span a standard subring of the character ring of $H$
corresponding to a Hopf subalgebra of dimension $20$, implying
that $H$ is not simple, by Lemma \ref{dim20}. Taking degrees we
find $m = m' = 1$, hence also $r = 1$. This proves \eqref{rel1}.
\end{proof}

Consider the projection $\gamma: H \to k^{G(H^*)}$. Then $\dim
H^{\co \gamma} = 15$, implying that $H^{\co \gamma}\cap G(H) = 1$
and $H^{\co \gamma} \cap A_1 = k1 \oplus U$, where $U$ is an
irreducible left coideal of dimension $2$. In particular, $H$ is a
biproduct $\tilde R \# kG(H)$, with $\tilde R = H^{\co \gamma}$, a
braided Hopf algebra over $G(H)$.

On the other hand, $R \subseteq \tilde R$. Hence, as a left
coideal of $H$, $\tilde R = k1 \oplus U \oplus V \oplus V_1 \oplus
V_2$, where $V_i$ are irreducible left coideals of dimension $4$.

\medbreak Let $C$ be the simple subcoalgebra containing $V$. If
$V_1$ and $V_2$ are both contained in $CG(H)$, then $\tilde R
\subseteq A_1 \oplus CG(H)$, and thus $H = \tilde R \# kG(H)
\subseteq A_1 \oplus CG(H)$. This is not possible because $\dim
A_1 \oplus CG(H) < \dim H$. Hence we may assume that $V_1$ is the
only, up to isomorphisms, stable irreducible left coideal.
Therefore $V_1 \# kG(H) \subseteq  C_1$, where $C_1$ is the simple
subcoalgebra of $H$ containing $V_1$. By dimension, we have $V_1
\# kG(H) = C_1$. This implies that $V_1 = (\id \otimes \epsilon)
(C)$ is a subcoalgebra of $R$.

\begin{claim}\label{afirm} The multiplicity of $V_1$ as a direct summand of $\tilde R$
equals $1$. \end{claim}

\begin{proof}[Proof of the claim] We know that $V_1$ appears in $\tilde
R$ with positive multiplicity. Since $V_1$ is not isomorphic to
$V$, it will be enough to show that the multiplicity of $V_1$ in
$\tilde R$ is not equal to $2$. Suppose on the contrary that this
is the case. Then $H = \tilde R \# kG(H) \subseteq A_1 \oplus V \#
kG(H) \oplus C_1$, where $C_1$ is the simple subcoalgebra
containing $V_1$. Counting dimensions, we see that this is not
possible. Hence the multiplicity of $V_1$ is $1$, as claimed.
\end{proof}

\medbreak It follows from Claim \ref{afirm} and \eqref{rel2} that
$V_1 \subseteq \tilde R$ is also a submodule under the (adjoint)
action of $G(H)$. Let $R_0 = k[V_1]$ be the subalgebra of $\tilde
R$ generated by $V_1$. Since $V_1$ is also a subcoalgebra of
$\tilde R$, then $R_0 \# kG(H) \subseteq \tilde R \# kG(H)$ is a
Hopf subalgebra.  If $\dim R_0 \# kG(H) = 20$, then we are done by
Lemma \ref{dim20}. Otherwise $R_0 \# kG(H) = H$. By
\cite[Corollary 1.3.2]{pqq2}, since $G(H)$ is cyclic, $\tilde R$
is a cocommutative coalgebra.

\medbreak  We claim that the (adjoint) action of $G(H)$ permutes
the set $G(V_1)$ transitively. Indeed, if this were not the case,
the centralizer $G(H)_x$ of $x$ in $G(H)$ would be of order $2$,
for all $x\in G(V_1)$, because $|G(V_1)| = 4$. Then the only
subgroup of order $2$ of $G(H)$ would centralize $V_1$ and
\textit{a fortiori} all of $\tilde R = k[V_1]$. Then this subgroup
would be central in $H$, contradicting the simplicity of $H$. This
proves the claim.

In particular, $\dim V_1^{G(H)} = 1$. We may now apply
\cite[Proposition 1.4.3]{pqq2} to conclude that $\tilde R$
contains a nontrivial group-like element of $H$, and arriving thus
to a  contradiction. This shows that $H$ is not simple and
finishes the proof of the lemma. \end{proof}

\bibliographystyle{amsalpha}

\end{document}